\theoremstyle{plain}
\newtheorem{thm}{Theorem}[section]
\newtheorem{pro}[thm]{Proposition}
\newtheorem{lem}[thm]{Lemma}
\newtheorem{cor}[thm]{Corollary}
\newtheorem{con}[thm]{Conjecture}
\newtheorem*{GenAbundCon}{Generalised Abundance Conjecture}
\newtheorem*{GenNonvanCon}{Generalised Nonvanishing Conjecture}
\newtheorem*{AbundCon}{Abundance Conjecture}
\newtheorem*{NonvanCon}{Nonvanishing Conjecture}
\newtheorem*{SemCon}{Semiampleness Conjecture on Calabi-Yau pairs}
\newtheorem*{SemConVar}{Semiampleness Conjecture on Calabi-Yau varieties}
\newtheorem{thmA}{Theorem}
\newtheorem{corA}[thmA]{Corollary}
\theoremstyle{definition}
\newtheorem{dfn}[thm]{Definition}
\newtheorem{rem}[thm]{Remark}
\newtheorem{exa}[thm]{Example}
\theoremstyle{remark}
\newcommand{\N}{\mathbb{N}}
\newcommand{\R}{\mathbb{R}}
\newcommand{\Q}{\mathbb{Q}}
\newcommand{\PS}{\mathbb{P}}
\newcommand{\OO}{\mathcal{O}}
\DeclareMathOperator{\codim}{codim}
\DeclareMathOperator{\mult}{mult}
\DeclareMathOperator{\Supp}{Supp}
\DeclareMathOperator{\Pic}{Pic}
\DeclareMathOperator{\NEb}{\overline{\mathrm{NE}}}
\begin{document}
\title[On Generalised Abundance, I]{On Generalised Abundance, I}

%\date{\today}
\author{Vladimir Lazi\'c}
\address{Fachrichtung Mathematik, Campus, Geb\"aude E2.4, Universit\"at des Saarlandes, 66123 Saarbr\"ucken, Germany}
\email{lazic@math.uni-sb.de}

\author{Thomas Peternell}
\address{Mathematisches Institut, Universit\"at Bayreuth, 95440 Bayreuth, Germany}
\email{thomas.peternell@uni-bayreuth.de}

\thanks{
Lazi\'c was supported by the DFG-Emmy-Noether-Nachwuchsgruppe ``Gute Strukturen in der h\"oherdimensionalen birationalen Geometrie". Peternell was supported by the DFG grant ``Zur Positivit\"at in der komplexen Geometrie". We would like to thank S.\ Schreieder for asking a question about the paper \cite{LP18} at a seminar in Munich in December 2017 which motivated this paper, and B.\ Lehmann and L.\ Tasin for useful discussions.
\newline
\indent 2010 \emph{Mathematics Subject Classification}: 14E30.\newline
\indent \emph{Keywords}: Abundance Conjecture, Minimal Model Program, Calabi-Yau varieties.
}

\begin{abstract}
The goal of this paper is to make a surprising connection between several central conjectures in algebraic geometry: the Nonvanishing Conjecture, the Abundance Conjecture, and the Semiampleness Conjecture for nef line bundles on $K$-trivial varieties.
\end{abstract}

\maketitle
\setcounter{tocdepth}{1}
\tableofcontents

\section{Introduction}

The goal of this paper is to establish a surprising connection between several central conjectures in algebraic geometry: the \emph{Nonvanishing Conjecture}, the \emph{Abundance Conjecture}, and the \emph{Semiampleness Conjecture} for nef line bundles on $K$-trivial varieties.

\medskip

We briefly recall these conjectures; note that an important special case of klt pairs are varieties with canonical singularities.

\begin{NonvanCon} 
Let $(X,\Delta)$ be a projective klt pair such that $K_X + \Delta$ is pseudoeffective. Then there exists a positive integer $m$ such that 
$$ H^0\big(X,\OO_X(m(K_X+\Delta))\big) \neq 0.$$
\end{NonvanCon} 

If additionally $K_X+\Delta$ is nef, then the Abundance Conjecture predicts that $K_X+\Delta$ is semiample:
 
\begin{AbundCon} 
Let $(X,\Delta)$ be a klt pair such that $K_X + \Delta$ is nef. Then $K_X + \Delta$ is semiample, i.e.\ some multiple $m(K_X + \Delta)$ is basepoint free. 
\end{AbundCon}

On the other hand, if $(X,\Delta)$ be a projective klt pair such that $K_X+\Delta$ is numerically trivial, i.e.\ $(X,\Delta)$ is a \emph{Calabi-Yau pair}, and if $L$ is a nef Cartier divisor on $X$, then the Semiampleness Conjecture (sometimes referred to as SYZ conjecture) predicts that the numerical class of $L$ contains a semiample divisor:

\begin{SemCon} 
Let $(X,\Delta)$ be a projective klt pair such that $K_X + \Delta \equiv 0 $. Let $L$ be a nef Cartier divisor on $X$. Then there exists a semiample $\Q$-divisor $L'$ such that $L \equiv L'$.
\end{SemCon} 

We will refer to this conjecture in the sequel simply as the \emph{Semiampleness Conjecture}. 

\medskip

The Nonvanishing and Abundance Conjectures were shown in dimension $3$ by the efforts of various mathematicians, in particular Miyaoka, Kawamata, Koll\'ar, Keel, Matsuki, M\textsuperscript{c}Kernan \cite{Miy87,Miy88a,Miy88b,Kaw92,Kol92,KMM94}. In arbitrary dimension, the abundance for klt pairs of log general type was proved by Shokurov and Kawamata \cite{Sho85,Kaw85b}, and the abundance for varieties with numerical dimension $0$ was established by Nakayama \cite{Nak04}. 

In contrast, not much is known on the Semiampleness Conjecture in dimensions at least $3$; we refer to \cite{LOP16,LOP16a} for the state of the art, in particular on the important contributions by Wilson on Calabi-Yau threefolds and by Verbitsky on hyperk\"ahler manifolds.

\medskip

In \cite{LP18} we made progress on these conjectures, especially on the Nonvanishing Conjecture. The following conjectures grew out of our efforts to push the possibilities of our methods to their limits.

\begin{GenNonvanCon}
Let $(X,\Delta)$ be a klt pair such that $K_X+\Delta$ is pseudoeffective. Let $L$ be a nef $\Q$-divisor on $X$. Then for every $t\geq0$ the numerical class of the divisor $K_X+\Delta+tL$ belongs to the effective cone.
\end{GenNonvanCon}

\begin{GenAbundCon}\label{con:genAbundanceSemiample}
Let $(X,\Delta)$ be a projective klt pair such that $K_X+\Delta$ is pseudoeffective and let $L$ be a nef Cartier divisor on $X$. If $K_X+\Delta+L$ is nef, then there exists a semiample $\Q$-divisor $M$ such that $K_X+\Delta+L\equiv M$.
\end{GenAbundCon}

These conjectures generalise the Nonvanishing and Abundance conjectures (by setting $L=0$) and the Semiampleness Conjecture (by setting $K_X+\Delta\equiv 0$), and finally make and explains precisely the relationship between them. A careful reader will have noticed that the conclusions of the Nonvanishing and Abundance conjectures are a priori stronger: that there is an effective (respectively semiample) divisor which is \emph{$\Q$-linearly equivalent} to $K_X+\Delta$, and not merely \emph{numerically equivalent}. However, the Nonvanishing for adjoint bundles is of numerical character \cite{CKP12}, and the same is true for the  Abundance \cite{Fuk11}.

\medskip

The Generalised Abundance Conjecture should be viewed as an extension of the Basepoint Free Theorem of Shokurov and Kawamata, in which the nef divisor $L$ is additionally assumed to be big.

\medskip

A weaker version of the Generalised Abundance Conjecture was stated by Koll\'ar \cite[Conjecture 51]{Kol15} in the context of the existence of elliptic fibrations on Calabi-Yau manifolds and the Semiampleness Conjecture. A more general version of the Generalised Nonvanishing Conjecture (without assuming pseudoeffectivity of $K_X+\Delta$) was posed as a question in  \cite[Question 3.5]{BH14}, with an expected negative answer. In this paper and its sequel we argue quite the opposite.

\medskip

\noindent{\sc The content of the paper.}
Here, we show that \emph{the Generalised Nonvanishing and Abundance Conjectures follow from the conjectures of the Minimal Model Program and from the Semiampleness Conjecture}. 

\medskip

Throughout this work, all varieties are complex, normal and projective. We denote the numerical dimension of $K_X+\Delta$ by $\nu(X,K_X+\Delta)$, see Definition \ref{dfn:kappa}. 

The following are the main results of this paper.

\begin{thmA}\label{main_theorem1}
Assume the termination of flips in dimensions at most $n$ and the Abundance Conjecture in dimensions at most $n$. 

Let $(X,\Delta)$ be a klt pair of dimension $n$ such that $K_X+\Delta$ is pseudoeffective, and let $L$ be a nef $\Q$-divisor on $X$.
\begin{enumerate}
\item[(i)] If $\nu(X,K_X+\Delta)>0$, then for every $t\geq0$ the numerical class of the divisor $K_X+\Delta+tL$ belongs to the effective cone. 
\item[(ii)] Assume additionally the Semiampleness Conjecture in dimensions at most $n$. If $\nu(X,K_X+\Delta)=0$, then for every $t\geq0$ the numerical class of the divisor $K_X+\Delta+tL$ belongs to the effective cone. 
\end{enumerate}
\end{thmA}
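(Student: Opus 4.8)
The plan is to run a minimal model program and reduce the general case to the two situations where the outcome is already known unconditionally: the case of numerical dimension $0$ (via Nakayama, respectively the Semiampleness Conjecture) and the case where $K_X+\Delta$ is abundant after passing to a minimal model (via the assumed Abundance Conjecture). The subtlety — and the reason the statement is not trivial — is that we must produce an \emph{effective} representative of the numerical class of $K_X+\Delta+tL$ for \emph{every} $t\ge 0$ simultaneously, not merely for $t=0$; so the argument has to be uniform in $t$, which forces us to track how $L$ behaves under the steps of the program.

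First I would run a $(K_X+\Delta)$-MMP with scaling; by the assumed termination of flips in dimension $\le n$ this terminates with a minimal model $\phi\colon X\dashrightarrow X'$ on which $K_{X'}+\Delta'$ is nef, where $\Delta'=\phi_*\Delta$. Since every step of this MMP is $(K_X+\Delta)$-negative and $L$ is nef, the pushforward $L'=\phi_*L$ remains nef on $X'$ (a step contracts only curves on which $K_X+\Delta$ is negative, and nefness of $L$ is preserved by a divisorial contraction or flip of such a ray — one checks this ray-by-ray). Moreover $\phi$ is an isomorphism in codimension one, so for each $t\ge 0$, $K_X+\Delta+tL$ is effective (up to numerical equivalence) if and only if $K_{X'}+\Delta'+tL'$ is; thus we may replace $(X,\Delta,L)$ by $(X',\Delta',L')$ and assume from the outset that $K_X+\Delta$ is nef. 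Now the Abundance Conjecture in dimension $\le n$ applies: $K_X+\Delta$ is semiample, so it induces a fibration $f\colon X\to Z$ with $K_X+\Delta\sim_\Q f^*A$ for some ample $\Q$-divisor $A$ on $Z$. In case (i) we have $\nu(X,K_X+\Delta)=\dim Z>0$.

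In case (i), the idea is to restrict to a very general fibre $F$ of $f$. On $F$ we have $(K_X+\Delta)|_F\equiv 0$, so by adjunction $K_F+\Delta|_F\equiv 0$ and $(F,\Delta|_F)$ is a klt Calabi–Yau pair, while $L|_F$ is nef; the numerical dimension of $K_X+\Delta+tL$ along $F$ is controlled by $L|_F$. For the class of $K_X+\Delta+tL$ to be effective it suffices (by a standard semicontinuity/countable-union argument over the base, using that $\dim Z>0$ gives us room to move $f^*A$) to show that $tL|_F$ sits in the effective cone of $F$ for all $t\ge 0$, together with positivity coming from the ample $A$ pulled back from $Z$; more precisely, I would argue that $K_X+\Delta+tL\equiv f^*A+tL$ and that $f^*A+tL$ is big for $t$ in a dense set, hence effective, and then handle the remaining $t$ by a limiting/closedness argument for the effective cone combined with the Nonvanishing Conjecture in dimension $\dim F<n$ (which is a consequence of termination plus abundance in lower dimension, applied inductively). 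This is where the hypothesis $\nu>0$ is essential: it guarantees $\dim F<n$, so the inductive hypothesis is available on $F$.

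In case (ii), $\nu(X,K_X+\Delta)=0$ means that after passing to the minimal model $X'$ above, $K_{X'}+\Delta'$ is nef with numerical dimension $0$, hence (by Nakayama, or directly by abundance) $K_{X'}+\Delta'\equiv 0$, i.e.\ $(X',\Delta')$ is a klt Calabi–Yau pair. Then $K_{X'}+\Delta'+tL'\equiv tL'$, and we must show $tL'$ lies in the effective cone for every $t\ge 0$. For $t=0$ this is trivial; for $t>0$, the Semiampleness Conjecture in dimension $\le n$ gives a semiample $\Q$-divisor $M'$ with $L'\equiv M'$, hence $tL'\equiv tM'$ is semiample, in particular effective. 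Pulling back along $\phi$ (an isomorphism in codimension one) gives the conclusion on $X$. The main obstacle is case (i): making the restriction-to-a-fibre argument genuinely produce an \emph{effective} class on $X$ for all $t$ — not just pseudoeffective — requires carefully combining the ampleness of $A$ on the base, the inductive Nonvanishing on fibres, and the closedness of the effective cone to sweep up the boundary values of $t$; I expect this patching to be the delicate technical heart of the proof.
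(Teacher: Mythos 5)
Your very first step already fails. Running an arbitrary $(K_X+\Delta)$-MMP with scaling does \emph{not} preserve nefness of $L$. If $R$ is a $(K_X+\Delta)$-negative flipping ray with $L\cdot R>0$, then writing $L-\lambda(K_X+\Delta)\equiv c^*D_Z$ near the contracted locus (for the $\lambda<0$ that makes the left side $R$-trivial), one sees that the strict transform $L^+$ satisfies $L^+\cdot R^+=\lambda(K_{X^+}+\Delta^+)\cdot R^+<0$, since the flip is $(K+\Delta)$-positive. So $L^+$ is \emph{not} nef. This is exactly why the paper does not run an arbitrary $(K_X+\Delta)$-MMP; Proposition~\ref{pro:contocon} constructs a specially chosen $(K_X+\Delta)$-MMP that is \emph{$L$-trivial} on every contracted ray, which is what preserves nefness (and Cartierness) of $L$. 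That construction is not automatic: it uses $L$ Cartier, $m>2n$, and the boundedness of extremal rays \cite[Theorem~1]{Kaw91} to rule out rays with $L\cdot R>0$. Your parenthetical ``one checks this ray-by-ray'' is precisely the step that is false without these ingredients. Relatedly, the claim that $\phi$ is an isomorphism in codimension one is wrong: a $(K_X+\Delta)$-MMP includes divisorial contractions.

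The second half of your case (i) also does not go through as written. After reaching a minimal model and taking the Iitaka fibration $f\colon X\to Z$, you propose to restrict to a very general fibre $F$ and then ``sweep up'' effectivity on $X$ via a semicontinuity/countable-union argument plus closedness of the effective cone. Two problems: (a) $f^*A+tL$ is \emph{not} big for a dense set of $t$ in the relevant situation — in the hard case $L$ is numerically trivial on the general fibre, so $\nu(X,f^*A+tL)\le\dim Z<n$ for all $t$ and bigness never occurs; (b) the effective cone is \emph{not} closed (its closure is the pseudoeffective cone), so a limiting argument from big classes cannot produce an effective class at boundary $t$. More fundamentally, num-effectivity of the restriction to a very general fibre does not descend to num-effectivity on $X$ by any ``standard semicontinuity argument''; bridging this gap is exactly the hard content of the paper. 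The paper's route is different and essential: it uses the nef reduction of $K_X+\Delta+mL$, then Lemma~\ref{lem:lehmann} to show that $L$ is, after a birational base change, a \emph{pullback} of a nef divisor from the base (rather than merely trivial on fibres), then runs a relative MMP over the base, invokes Ambro's result \cite[Theorem~0.2]{Amb05a} to write $K_X+\Delta\sim_\Q\tau^*(K_T+\Delta_T)$, and finally inducts on $\dim T<n$. In other words, the paper descends the whole problem to the base of the nef reduction; restricting to a fibre and trying to spread out cannot replace this. Case (ii) of your sketch is essentially correct in outline, but it still relies on the broken step~1.
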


\begin{thmA}\label{main_theorem2}
Assume the termination of flips in dimensions at most $n$ and the Abundance Conjecture in dimensions at most $n$. 

Let $(X,\Delta)$ be a klt pair of dimension $n$ such that $K_X+\Delta$ is pseudoeffective, and let $L$ be a nef $\Q$-divisor on $X$ such that $K_X+\Delta+L$ is nef.
\begin{enumerate}
\item[(i)] If $\nu(X,K_X+\Delta)>0$, then there exists a semiample $\Q$-divisor $M$ on $X$ such that $K_X+\Delta+L\equiv M$.
\item[(ii)] Assume additionally the Semiampleness Conjecture in dimensions at most $n$. If $\nu(X,K_X+\Delta)=0$, then there exists a semiample $\Q$-divisor $M$ on $X$ such that $K_X+\Delta+L\equiv M$.
\end{enumerate}
\end{thmA}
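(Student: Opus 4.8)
The plan is to derive Theorem~\ref{main_theorem2} from Theorem~\ref{main_theorem1}, the Abundance Conjecture, and the Basepoint Free Theorem, by induction on the quantity $n-\nu(X,K_X+\Delta)$, the crucial device being a perturbation of the boundary. By Theorem~\ref{main_theorem1} (applied with $t=1$; this is where the Semiampleness Conjecture enters, namely in the case $\nu(X,K_X+\Delta)=0$) there is an effective $\Q$-divisor $E\geq0$ with $K_X+\Delta+L\equiv E$, and $E$ is nef since $E\equiv K_X+\Delta+L$. For $0<\varepsilon\ll1$ the pair $(X,\Delta+\varepsilon E)$ is klt, the class $K_X+\Delta+\varepsilon E\equiv(K_X+\Delta)+\varepsilon(K_X+\Delta+L)$ is pseudoeffective, and
\[
(K_X+\Delta+\varepsilon E)+L\;\equiv\;(1+\varepsilon)(K_X+\Delta+L)
\]
is nef; thus $(X,\Delta+\varepsilon E,L)$ again satisfies the hypotheses of the theorem, and a semiample $\Q$-divisor numerically equivalent to $(K_X+\Delta+\varepsilon E)+L$ yields one numerically equivalent to $K_X+\Delta+L$ upon dividing by $1+\varepsilon$. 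Since adding a nef class does not decrease the numerical dimension, $\nu':=\nu(X,K_X+\Delta+\varepsilon E)\geq\nu(X,K_X+\Delta)$, and this value stabilises for $0<\varepsilon\ll1$.

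This already reduces part~(ii) to part~(i): if $\nu(X,K_X+\Delta)=0$, then either $\nu(X,K_X+\Delta+L)=0$, so that the nef class $K_X+\Delta+L$ is numerically trivial and $M=0$ works, or $\nu(X,K_X+\Delta+L)>0$, in which case $\nu'\geq\nu(X,K_X+\Delta+L)>0$ and the perturbation above places us in the situation of part~(i). For part~(i) we induct on $n-\nu(X,K_X+\Delta)\geq0$. If $\nu(X,K_X+\Delta)=n$, then $K_X+\Delta$, hence also the nef divisor $K_X+\Delta+L$, is big, and for $a>1$ the divisor $a(K_X+\Delta+L)-(K_X+\Delta)=(a-1)(K_X+\Delta+L)+L$ is nef and big, so the Basepoint Free Theorem shows that $K_X+\Delta+L$ is semiample and $M=K_X+\Delta+L$ works. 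If $0<\nu(X,K_X+\Delta)<n$ and $\nu'>\nu(X,K_X+\Delta)$, apply the inductive hypothesis to $(X,\Delta+\varepsilon E,L)$, for which the induction parameter $n-\nu'$ is strictly smaller, and divide by $1+\varepsilon$.

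The remaining, decisive case is $0<\nu(X,K_X+\Delta)<n$ with $\nu'=\nu(X,K_X+\Delta)$. Here I would first reduce to the case that $K_X+\Delta$ is nef by running a $(K_X+\Delta)$-minimal model program, which terminates by hypothesis; then the Abundance Conjecture provides the Iitaka fibration $f\colon X\to Z$ with $K_X+\Delta\sim_{\Q}f^{*}A$ for an ample $\Q$-divisor $A$ on $Z$ and $\dim Z=\nu(X,K_X+\Delta)$. Computing numerical dimensions along $f$, the equality $\nu'=\nu(X,K_X+\Delta)$ forces $\nu(F,L|_{F})=0$, hence $L|_{F}\equiv0$, for a general fibre $F$. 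Therefore $L$ is numerically trivial on the fibres of $f$, so $L\equiv f^{*}L_{Z}$ for a nef $\Q$-divisor $L_{Z}$ on $Z$, the class $A+L_{Z}$ is ample, and $K_X+\Delta+L\equiv f^{*}(A+L_{Z})$ is numerically the pullback of an ample class; taking $M=f^{*}(A+L_{Z}')$ for any $\Q$-divisor $L_{Z}'\equiv L_{Z}$ finishes the argument.

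The hard part is this last case, and within it the control of $L$ under the minimal model program: because $K_X+\Delta+L$ is nef, every $(K_X+\Delta)$-negative extremal ray $R$ satisfies $L\cdot R\geq-(K_X+\Delta)\cdot R>0$, so along the program $L$ neither remains nef nor keeps $K_X+\Delta+L$ nef, and one cannot simply pass to a minimal model of $(X,\Delta)$. Instead one must run the program with scaling of $L$, carry along the pseudoeffective class $K_X+\Delta+L$ together with the scaling thresholds (which stay in $[0,1]$), and verify that the numerical triviality of $L$ on the general fibre of $f$, the descent $L\equiv f^{*}L_{Z}$, and the equality $\nu(X,K_X+\Delta+\varepsilon E)=\nu(X,K_X+\Delta)$ all persist on the output model. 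Checking that the numerical dimension is genuinely preserved under all of these operations — the perturbations, the minimal model program, and the pushforward to $Z$ — is where I expect the technical weight of the proof to lie.
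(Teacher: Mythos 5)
Your strategy -- reducing part (ii) to part (i) by perturbing $\Delta$ by a small multiple of the effective divisor $E$ furnished by Theorem~\ref{main_theorem1}, and then inducting on $n-\nu(X,K_X+\Delta)$ -- is genuinely different from the paper's. The paper deduces Theorem~\ref{main_theorem2} in two lines from the technical Theorem~\ref{thm:semiampleP_sigma}, which establishes that $P_\sigma\big(w^*(K_X+\Delta+L)\big)$ is num-semiample on a suitable smooth birational model $w\colon W\to X$; when $K_X+\Delta+L$ is already nef, $P_\sigma$ is the full pullback and one descends via Lemma~\ref{lem:descenteff1}. All the work there is pushed into Theorem~\ref{thm:semiampleP_sigma} (nef reduction, relative MMP over its base, Ambro's canonical bundle formula, and induction on dimension). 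Your outline is appealing in its modularity, but it has two genuine gaps in the decisive case $0<\nu(X,K_X+\Delta)<n$ with $\nu'=\nu(X,K_X+\Delta)$.

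First, as you yourself flag, running a $(K_X+\Delta)$-MMP to make $K_X+\Delta$ nef is not $L$-trivial, and the scaling-of-$L$ MMP of \S\ref{subsec:scaling} does not preserve the nefness of $K_X+\Delta+L$ itself: the thresholds $\lambda_i$ start at $\lambda_0\geq 1$ and are non-decreasing, so on the final model one only knows that $s(K_{X_\ell}+\Delta_\ell)+L_\ell$ is nef for $s\geq\lambda_{\ell-1}$, and $\lambda_{\ell-1}$ may well exceed $1$. One cannot then recover a statement about $K_X+\Delta+L$ on the original $X$ by a simple negativity argument, since the map is only $\big(s(K_X+\Delta)+L\big)$-negative for $s>\lambda_{\ell-1}$. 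The paper sidesteps this by working with $P_\sigma$ of the pullback throughout rather than trying to keep the honest divisor nef.

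Second, and more seriously, the claim ``$L$ is numerically trivial on the fibres of $f$, so $L\equiv f^*L_Z$'' is false in this generality: a nef divisor numerically trivial on the \emph{general} fibre of a fibration need not be numerically the pullback of anything from the base. This is exactly the point of Section~\ref{sec:reductions} of the paper; Lemma~\ref{lem:lehmann} shows such a descent only becomes available after a birational base change $Z'\to Z$ (together with the induced modification of $X$). Once you introduce that modification, the Iitaka fibration becomes only a rational map on the new model, the morphism to $Z'$ need not be the Iitaka fibration any more, and you are pulled into the same machinery -- relative good models over the base, Ambro's theorem to produce a klt pair downstairs, and induction -- that constitutes Steps 3--5 of the proof of Theorem~\ref{thm:semiampleP_sigma}. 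So the ``decisive case'' of your induction is, in effect, the whole of Theorem~\ref{thm:semiampleP_sigma}, and treating it as a black box leaves the argument incomplete.
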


As a consequence, we obtain the following new results in dimensions two and three.

\begin{corA}\label{cor:dim2}
Let $(X,\Delta)$ be a klt pair of dimension $2$ such that $K_X+\Delta$ is pseudoeffective, and let $L$ be a nef $\Q$-divisor on $X$. Then for every $t\geq0$ the numerical class of the divisor $K_X+\Delta+tL$ belongs to the effective cone.  If additionally $K_X+\Delta+L$ is nef, then there exists a semiample $\Q$-divisor $M$ on $X$ such that $K_X+\Delta+L\equiv M$.
\end{corA}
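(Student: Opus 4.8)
The plan is to deduce both assertions from Theorems~\ref{main_theorem1} and~\ref{main_theorem2} applied with $n=2$, so the only thing to check is that the three hypotheses of those theorems are available in dimensions at most $2$. Two of them are classical. Termination of flips holds vacuously in dimensions at most $2$, since the Minimal Model Program for a klt surface pair has no flipping contractions (a $K_Y+\Gamma$-negative extremal contraction on a surface is either divisorial or of fibre type), and the surface Minimal Model Program itself terminates because the Picard number strictly drops at each divisorial contraction. The Abundance Conjecture for klt surface pairs is likewise classical — if $(Y,\Gamma)$ is such a pair with $K_Y+\Gamma$ nef, then $K_Y+\Gamma$ is semiample — and follows from the Enriques--Kodaira classification together with the Basepoint Free Theorem (see \cite{Kaw85b}; for numerical dimension zero in arbitrary dimension see also \cite{Nak04}). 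Granting these two inputs, parts~(i) of Theorems~\ref{main_theorem1} and~\ref{main_theorem2} already give the Corollary whenever $\nu(X,K_X+\Delta)>0$.

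To treat the remaining case $\nu(X,K_X+\Delta)=0$, parts~(ii) of those theorems additionally require the Semiampleness Conjecture in dimensions at most $2$, which is known (see \cite{LOP16,LOP16a} and the references therein). I would recall the strategy: given a projective klt surface pair $(Y,\Gamma)$ with $K_Y+\Gamma\equiv 0$ and a nef Cartier divisor $N$ on $Y$, pass to the minimal resolution of $Y$ and run the classical Minimal Model Program for the underlying smooth surface; this preserves nefness of the pushforward of $N$ and numerical triviality of the log canonical class, and, since pullbacks of semiample divisors are semiample, reduces the problem to the case in which $Y$ is a smooth minimal surface. Because $-K_Y$ is then pseudoeffective, the Enriques--Kodaira classification leaves only the cases that $Y$ is rational or birationally ruled, or that $Y$ admits an \'etale cover which is a K3, abelian, Enriques or bielliptic surface; in each of these a nef Cartier divisor is numerically equivalent to a semiample one (classical for K3 and abelian surfaces, by descent along the respective \'etale cover for Enriques and bielliptic surfaces, and by the Cone, Contraction and Basepoint Free Theorems in the rational and ruled cases). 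It then remains to transport the semiample class back to the original $Y$: on the smooth minimal model one has produced a semiample divisor whose pullback differs from $N$ by an effective divisor contracted by the minimal model map, and one must check that $N$ itself is still numerically equivalent to a semiample divisor, which on surfaces follows from analysing the induced fibration.

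I expect the main obstacle to be exactly this last point — the surface case of the Semiampleness Conjecture, and within it the passage between a klt surface pair and its minimal resolution and back. Each individual reduction is standard, but keeping track of the exceptional divisors and of the (possibly non-effective) boundary along the way, so as to carry a semiample representative from the smooth minimal model all the way back to $Y$ and then down to $X$, is the delicate part. The Minimal Model Program and abundance inputs demanded by Theorems~\ref{main_theorem1} and~\ref{main_theorem2}, by contrast, are entirely routine in dimension $2$.
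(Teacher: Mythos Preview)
Your overall architecture is exactly that of the paper: Corollary~\ref{cor:dim2} is deduced from Theorems~\ref{main_theorem1} and~\ref{main_theorem2} once one checks termination, abundance, and the Semiampleness Conjecture in dimensions at most $2$. The first two inputs are indeed classical, as you say.

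The difference lies in how the surface Semiampleness Conjecture is handled. The paper does not cite it but \emph{proves} it as Theorem~\ref{thm:semiamplenessdim2}; your reference to \cite{LOP16,LOP16a} is misplaced, since those papers concern the threefold case. More importantly, the paper's argument is quite different from your classification sketch and sidesteps precisely the difficulty you flag. The paper first observes (using surface abundance) that it suffices to show $L$ is num-effective. After passing to a terminalisation so that $X$ is smooth, it invokes Lemma~\ref{lem:surfacestoabelian}: if there is a nontrivial map to an abelian variety the claim follows directly. Otherwise $h^1(X,\OO_X)=0$, and then Riemann--Roch gives
\[
\chi(X,L)=\tfrac12 L\cdot(L-K_X)+\chi(X,\OO_X)=\tfrac12 L\cdot(L+\Delta)+\chi(X,\OO_X)\geq 1,
\]
so $h^0(X,L)\geq 1$ since $h^2(X,L)=h^0(X,K_X-L)=0$. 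No descent from a minimal model is needed.

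Your classification route is not wrong in spirit---the paper even mentions it as an alternative, pointing to \cite{Tot10} and to Theorem~\ref{thm:reductiontoSemiamplenessCY}---but the step you yourself identify as delicate is genuinely so. After pushing $N$ forward along the $K_{Y'}$-MMP $\varphi\colon Y'\to Y_{\min}$, one has $N=\varphi^*\varphi_*N-E$ with $E\geq 0$ exceptional (by negative definiteness and $N\cdot E_i\geq 0$), so num-semiampleness of $\varphi_*N$ gives only that $N+E$ is num-semiample, not $N$ itself. One can repair this case by case via the induced fibration, but it is extra work that the paper's Riemann--Roch argument avoids entirely.
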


\begin{corA}\label{cor:dim3}
Let $(X,\Delta)$ be a klt pair of dimension $3$ such that $K_X+\Delta$ is pseudoeffective, and let $L$ be a nef $\Q$-divisor on $X$. Assume that $\nu(X,K_X+\Delta)>0$. Then for every $t\geq0$ the numerical class of the divisor $K_X+\Delta+tL$ belongs to the effective cone.  If additionally $K_X+\Delta+L$ is nef, then there exists a semiample $\Q$-divisor $M$ on $X$ such that $K_X+\Delta+L\equiv M$.
\end{corA}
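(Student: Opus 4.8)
The plan is to deduce this directly from Theorems~\ref{main_theorem1} and~\ref{main_theorem2} applied with $n=3$: the assertion about the effective cone is Theorem~\ref{main_theorem1}(i), and the assertion about semiampleness is Theorem~\ref{main_theorem2}(i). Thus the whole task reduces to checking that the hypotheses of those two theorems hold in dimension $3$, and to observing that because we assume $\nu(X,K_X+\Delta)>0$ we only need parts (i), which do \emph{not} invoke the Semiampleness Conjecture.

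Accordingly, I would first recall that the termination of flips in dimensions at most $3$ is known (via the standard argument with strictly increasing discrepancies and Shokurov's difficulty function, together with the fact that divisorial contractions drop the Picard number), and that the Abundance Conjecture in dimensions at most $3$ is a theorem, assembled from the work of Miyaoka, Kawamata, Koll\'ar, Keel, Matsuki and M\textsuperscript{c}Kernan \cite{Miy87,Miy88a,Miy88b,Kaw92,Kol92,KMM94}. These are precisely the two global inputs required by Theorems~\ref{main_theorem1} and~\ref{main_theorem2}.

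Granting them, Theorem~\ref{main_theorem1}(i) applies verbatim and yields that for every $t\geq0$ the numerical class of $K_X+\Delta+tL$ lies in the effective cone; and, when $K_X+\Delta+L$ is nef, Theorem~\ref{main_theorem2}(i) produces the semiample $\Q$-divisor $M$ with $K_X+\Delta+L\equiv M$. There is no genuine obstacle in the argument itself; the only point worth flagging is the necessity of the hypothesis $\nu(X,K_X+\Delta)>0$. If $\nu(X,K_X+\Delta)=0$ one would instead need parts (ii) of the two theorems, hence the Semiampleness Conjecture in dimension $3$ for klt Calabi--Yau pairs, which is not currently available --- in contrast to dimension $2$, where that conjecture holds and Corollary~\ref{cor:dim2} therefore carries no restriction on $\nu$.
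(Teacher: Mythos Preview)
Your proposal is correct and matches the paper's own argument: the paper simply notes that Corollary~\ref{cor:dim3} is an immediate consequence of Theorems~\ref{main_theorem1}(i) and~\ref{main_theorem2}(i), since termination of flips and the Abundance Conjecture are theorems in dimensions at most $3$. Your additional remarks on why the hypothesis $\nu(X,K_X+\Delta)>0$ is needed are accurate and consistent with the discussion in the paper.
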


There are a few unresolved cases of the Semiampleness Conjecture in dimension $3$, see \cite{LOP16a}, hence one cannot currently derive the Generalised Nonvanishing Conjecture and the Generalised 
Abundance Conjecture 
unconditionally on threefolds when $\nu(X,K_X+\Delta)=0$. 

\medskip

As an intermediate step in the proofs, we obtain as a consequence of Corollary \ref{cor:genAbundancereduction1} the following reduction for the Generalised Nonvanishing Conjecture.
 
\begin{thmA}\label{thm:reduction}
Assume the termination of klt flips in dimension $n$. If the Generalised Nonvanishing Conjecture holds for $n$-dimensional klt pairs $(X,\Delta)$ and for nef divisors $L$ on $X$ such that $K_X + \Delta + L$ is nef, then the Generalised Nonvanishing Conjecture holds in dimension $n$. 
\end{thmA}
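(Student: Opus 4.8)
The plan is to reinterpret the conjecture so that the hypothesis becomes literally a special case, then to pass to that special case by one run of the Minimal Model Program; the subtle point will be keeping the nef divisor $L$ nef along the way.

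First I would reformulate. Since $tL$ is a nef $\Q$-divisor for every $t\geq 0$ and the zero divisor is nef, the Generalised Nonvanishing Conjecture in dimension $n$ is equivalent to the statement that $K_X+\Delta+L\in\Eff(X)$ for every $n$-dimensional klt pair $(X,\Delta)$ with $K_X+\Delta$ pseudoeffective and every nef $\Q$-divisor $L$ on $X$; and, taking $t=1$ in the hypothesis, what we are allowed to assume is exactly that $K_X+\Delta+L\in\Eff(X)$ whenever, moreover, both $L$ and $K_X+\Delta+L$ are nef. So it is enough to prove: if $(X,\Delta)$ is an $n$-dimensional klt pair with $K_X+\Delta$ pseudoeffective and $L$ is any nef $\Q$-divisor on $X$, then $K_X+\Delta+L\in\Eff(X)$.

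Next I would run a $(K_X+\Delta+L)$-MMP with scaling of an ample divisor. The key observation is that, because $L$ is nef, every extremal ray $R$ contracted in this program satisfies $L\cdot R\geq 0$, hence $(K_X+\Delta)\cdot R\leq(K_X+\Delta+L)\cdot R<0$; thus the program is at the same time a $(K_X+\Delta)$-MMP, all the pairs occurring stay klt, and by the assumed termination of klt flips in dimension $n$ it terminates. Since $K_X+\Delta+L$ is pseudoeffective its strict transforms stay pseudoeffective, so the program cannot end with a Mori fibre space; it therefore produces a birational contraction $\pi\colon X\dashto Y$ onto a klt pair $(Y,\Delta_Y)$ for which $K_Y+\Delta_Y+L_Y$ is nef, where $\Delta_Y=\pi_*\Delta$ and $L_Y=\pi_*L$, while $K_Y+\Delta_Y=\pi_*(K_X+\Delta)$ is still pseudoeffective. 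As every step of the program is $(K_X+\Delta+L)$-nonpositive, effectivity of $K_Y+\Delta_Y+L_Y$ on $Y$ implies effectivity of $K_X+\Delta+L$ on $X$, so it is enough to show $K_Y+\Delta_Y+L_Y\in\Eff(Y)$.

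On $Y$ we now have a klt pair with $K_Y+\Delta_Y$ pseudoeffective and $K_Y+\Delta_Y+L_Y$ nef, so if also $L_Y$ were nef the hypothesis (in the reformulated guise above) would apply on $Y$ and give $K_Y+\Delta_Y+L_Y\in\Eff(Y)$, completing the argument. The obstacle — and the step I expect to be the hard part — is that the nefness of $L$ need not survive the MMP above: a flip contracting a ray $R$ with $L\cdot R>0$ turns the strict transform of $L$ into a divisor that is negative on the flipped curve. Overcoming this is precisely what the reduction underlying Corollary~\ref{cor:genAbundancereduction1} is meant to supply, namely a birational model on which $K_Y+\Delta_Y+L_Y$ is nef and $L_Y$ is still nef (morally, by running the program relatively over an ample model that remembers $L$, contracting only those rays that are $L$-trivial beyond what is already forced by the $(K_X+\Delta+L)$-negativity). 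Granting this, the previous paragraph applies verbatim, $K_Y+\Delta_Y+L_Y$ lies in the effective cone, and pushing forward gives $K_X+\Delta+L\in\Eff(X)$, which by the reformulation is the Generalised Nonvanishing Conjecture in dimension $n$.
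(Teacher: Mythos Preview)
Your reformulation is correct and your diagnosis of the obstacle is exactly right: a $(K_X+\Delta+L)$-MMP will in general destroy the nefness of $L$, so the hypothesis cannot be invoked on $Y$. But you do not actually close this gap. Deferring to ``the reduction underlying Corollary~\ref{cor:genAbundancereduction1}'' is circular here, since Theorem~\ref{thm:reduction} is derived in the paper precisely from that corollary; and your parenthetical suggestion (running relatively over an ample model that ``remembers $L$'', contracting only $L$-trivial rays) does not work as stated, because $L$ is only nef and has no associated morphism, and one cannot simply decree which extremal rays to contract.

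The missing idea is the content of Proposition~\ref{pro:contocon}: replace $L$ by $mL$ with $m>2n$ and $mL$ Cartier, and run a $(K_X+\Delta)$-MMP with scaling so that each contracted ray $R$ also satisfies $(K_X+\Delta+mL)\cdot R<0$. By boundedness of extremal rays \cite{Kaw91} there is a curve $C\in R$ with $(K_X+\Delta)\cdot C\geq -2n$; if $L\cdot C>0$ then $L\cdot C\geq 1$ since $L$ is Cartier, giving $(K_X+\Delta+mL)\cdot C>0$, a contradiction. Hence every step is $L$-trivial, so $L_Y$ stays nef and Cartier, and on the resulting model both $mL_Y$ and $K_Y+\Delta_Y+mL_Y$ are nef. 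Now your reformulated hypothesis applies on $Y$ to give $K_Y+\Delta_Y+mL_Y$ num-effective, and since the MMP is $(K_X+\Delta+mL)$-negative this pulls back to $K_X+\Delta+mL$ num-effective. A separate terminating $(K_X+\Delta)$-MMP plus the hypothesis with $L=0$ shows $K_X+\Delta$ is num-effective, and then
\[
K_X+\Delta+L=\tfrac{m-1}{m}(K_X+\Delta)+\tfrac{1}{m}(K_X+\Delta+mL)
\]
is num-effective as well. This is exactly the paper's route via Proposition~\ref{pro:contocon} and Corollary~\ref{cor:genAbundancereduction1}; without the scaling-by-$m$ trick, your argument does not go through.
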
 

\medskip

\noindent{\sc Comments on the proof.}
Our first step is to show that, modulo the Minimal Model Program (MMP), we may assume that $K_X+\Delta+mL$ is nef for $m\gg0$; this argument is essentially taken from \cite{BH14,BZ16}. The essential ingredient in the proof is the boundedness of extremal rays, which forces any $(K_X+\Delta+mL)$-MMP to be $L$-trivial. The details are in Proposition \ref{pro:contocon}.

\medskip

The main problem in the proofs of Theorems \ref{main_theorem1} and \ref{main_theorem2} -- presented in Section \ref{sec:main} -- was to find a correct inductive statement, and the formulation of \cite[Lemma 4.2]{GL13} was a great source of inspiration. The key is to consider the behaviour of the positive part $P_\sigma(K_X+\Delta+L)$ in the Nakayama-Zariski decomposition, see \S\ref{subsec:numdim} below. Then our main technical result, Theorem \ref{thm:semiampleP_sigma}, shows that the positive part of the pullback of $K_X+\Delta+L$ to some smooth birational model of $X$ is semiample (up to numerical equivalence). This implies Theorems \ref{main_theorem1} and \ref{main_theorem2} immediately.

As a nice corollary, we obtain that, modulo the MMP and the Semiampleness Conjecture, the section ring of $K_X+\Delta+L$ is finitely generated, up to numerical equivalence. This is Corollary \ref{cor:fingen}.

\medskip

In order to prove Theorem \ref{thm:semiampleP_sigma}, we may assume additionally that $K_X+\Delta+mL$ is nef for $m\gg0$. Then we distinguish two cases. If the \emph{nef dimension} of $K_X+\Delta+mL$ is maximal, then we run a carefully chosen MMP introduced in \cite{KMM94} to show that $K_X+\Delta+L$ is also big; we note that this is the only place in the proof where the Semiampleness Conjecture is used. Once we know the bigness, we conclude by running the MMP as in \cite{BCHM}. 

If the nef dimension of $K_X+\Delta+mL$ is not maximal, the proof is much more involved. We use the \emph{nef reduction map} of $K_X+\Delta+mL$ to a variety of lower dimension. This map has the property that both $K_X+\Delta$ and $L$ are numerically trivial on its general fibres. Then we use crucially Lemma \ref{lem:lehmann} below, which shows that then $L$ is (birationally) a pullback of a divisor on the base of the nef reduction. This result is a consequence of previous work of Nakayama and Lehmann \cite{Nak04,Leh15}, and is of independent interest. We conclude by running a relative MMP over the base of the nef reduction and applying induction on the dimension. Here, the techniques of \cite{Nak04} and the main result of \cite{Amb05a} are indispensable.

We further note that, as stated in Theorems \ref{main_theorem1} and \ref{main_theorem2}, the Semiampleness Conjecture is used only when the numerical dimension of $K_X+\Delta$ is zero; otherwise, the conjectures follow only from the standard conjectures of the MMP (the termination of flips and the Abundance Conjecture). 

\medskip

\noindent{\sc Final remarks.}
It is natural to wonder whether the Generalised Nonvanishing and the Generalised Abundance Conjectures hold under weaker assumptions. In Section \ref{AppB} we give examples showing that the Generalised Abundance Conjecture fails in the log canonical setting, and also when $K_X+\Delta$ is not pseudoeffective. We also give an application towards Serrano's Conjecture.

Further, in Section \ref{sec:semiampleness} we reduce the Semiampleness Conjecture (modulo the MMP) to a much weaker version, which deals only with Calabi-Yau varieties with terminal singularities. 

Finally, in Section \ref{sec:lowdimensions} we discuss in detail the Semiampleness Conjecture in dimensions $2$ and $3$.

\section{Preliminaries}

A \emph{fibration} is a projective surjective morphism with connected fibres between two normal varieties.

We write $D \geq 0$ for an effective $\Q$-divisor $D$ on a normal variety $X$. If $f\colon X\to Y$ is a surjective morphism of normal varieties and if $D$ is an effective $\Q$-divisor on $X$, then $D$ is \emph{$f$-exceptional} if $\codim_Y f(\Supp D) \geq 2$.

A \emph{pair} $(X,\Delta)$ consists of a normal variety $X$ and a Weil $\Q$-divisor $\Delta\geq0$ such that the divisor $K_X+\Delta$ is $\Q$-Cartier. The standard reference for the foundational definitions and results on the singularities of pairs and the Minimal Model Program is \cite{KM98}, and we use these freely in this paper. We recall additionally that flips for klt pairs exist by \cite[Corollary 1.4.1]{BCHM}.

In this paper we use the following definitions:
\begin{enumerate}
\item[(a)] a $\Q$-divisor $L$ on a projective variety $X$ is \emph{num-effective} if the numerical class of $L$ belongs to the effective cone of $X$,\footnote{We would prefer the natural term \emph{numerically effective}; unfortunately, this would likely cause confusion with the term \emph{nef}.}
\item[(b)] a $\Q$-divisor $L$ on a projective variety $X$ is \emph{num-semiample} if there exists a semiample $\Q$-divisor $L'$ on $X$ such that $L\equiv L'$.
\end{enumerate}

\subsection{Models}

We recall the definition of negative maps and good models.

\begin{dfn}
Let $X$ and $Y$ be $\Q$-factorial varieties, and let $D$ be a $\Q$-divisor on $X$. A birational contraction $f\colon X\dashrightarrow Y$ is \emph{$D$-negative} if there exists a resolution $(p,q)\colon W\to X\times Y$ of the map $f$ such that $p^*D=q^*f_*D+E$, where $E\geq0$ is a $q$-exceptional $\Q$-divisor and $\Supp E$ contains the proper transform of every $f$-exceptional divisor. If additionally $f_*D$ is semiample, the map $f$ is a \emph{good model} for $D$.
\end{dfn}

Note that if $(X,\Delta)$ is a klt pair, then it has a good model if and only if there exists a Minimal Model Program with scaling of an ample divisor which terminates with a good model of $(X,\Delta)$, cf.\  \cite[Propositions 2.4 and 2.5]{Lai11}.

\subsection{Numerical dimension and Nakayama-Zariski decomposition}\label{subsec:numdim}

Recall the definition of the numerical dimension from \cite{Nak04,Kaw85}.

\begin{dfn}\label{dfn:kappa}
Let $X$ be a normal projective variety, let $D$ be a pseudoeffective $\Q$-Cartier $\Q$-divisor on $X$ and let $A$ be any ample $\Q$-divisor on $X$. Then the {\em numerical dimension\/} of $D$ is
$$\nu(X,D)=\sup\big\{k\in\N\mid \limsup_{m\rightarrow\infty}h^0(X, \mathcal O_X(\lfloor mD\rfloor+A))/m^k >0\big\}.$$
When the divisor $D$ is nef, then equivalently
$$\nu(X,D)=\sup\{k\in\N\mid D^k\not\equiv0\}.$$
If $D$ is not pseudoeffective, we set $\nu(X,D)=-\infty$.
\end{dfn}

The Kodaira dimension and the numerical dimension behave well under proper pullbacks: if $D$ is a $\Q$-Cartier $\Q$-divisor on a normal variety $X$, and if $f\colon Y\to X$ is a proper surjective morphism from a normal variety $Y$, then 
$$\kappa(X,D)=\kappa(Y,f^*D)\quad\text{and}\quad\nu(X,D)=\nu(Y,f^*D);$$ 
and if, moreover, $f$ is birational and $E$ is an effective $f$-exceptional divisor on $Y$, then
\begin{equation}\label{eq:compare}
\kappa(X,D)=\kappa(Y,f^*D+E)\quad\text{and}\quad\nu(X,D)=\nu(Y,f^*D+E).
\end{equation}
The first three relations follow from \cite[Lemma II.3.11, Proposition V.2.7(4)]{Nak04}. For the last one, one can find ample divisors $H_1$ and $H_2$ on $Y$ and ample divisor $A$ on $X$ such that $H_1\leq f^*A\leq H_2$, so that for $m$ sufficiently divisible we have
\begin{multline*}
h^0\big(Y,m(f^*D+E)+H_1\big)\leq h^0\big(Y,m(f^*D+E)+f^*A\big)\\
=h^0\big(X,m(D+E)+A\big)\leq h^0\big(Y,m(f^*D+E)+H_2\big).
\end{multline*} 
The desired equality of numerical dimensions follows immediately.

In particular, the Kodaira dimension and the numerical dimension of a divisor $D$ are preserved under any $D$-negative birational map.

Finally, if $D$ is a pseudoeffective $\R$-Cartier $\R$-divisor on a smooth projective variety $X$, we denote by $P_\sigma(D)$ and $N_\sigma(D)$ the $\R$-divisors forming the Nakayama-Zariski decomposition of $D$, see\ \cite[Chapter III]{Nak04} for the definition and the basic properties. We use the decomposition
$$ D = P_\sigma (D) + N_\sigma (D)$$  
crucially in the proofs in this paper.

\begin{lem}\label{lem:1}
Let $X$, $X'$, $Y$ and $Y'$ be normal varieties, and assume that we have a commutative diagram
\[
\xymatrix{ 
X' \ar[d]_{\pi'} \ar[r]^{f'} & Y' \ar[d]^{\pi}\\
X \ar[r]_{f} & Y,
}
\]
where $\pi$ and $\pi'$ are projective birational. Let $D$ be a $\Q$-Cartier $\Q$-divisor on $X$ and let $E\geq0$ be a $\pi'$-exceptional $\Q$-Cartier $\Q$-divisor on $X'$. If $F$ and $F'$ are general fibres of $f$ and $f'$, respectively, then 
$$\nu(F,D|_F)=\nu\big(F',(\pi'^*D+E)|_{F'}\big).$$
\end{lem}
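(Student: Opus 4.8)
The plan is to deduce the statement from the comparison formula \eqref{eq:compare}, applied to the restriction of $\pi'$ to the general fibre $F'$.

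First I would fix the geometry. Since $\pi$ is projective birational, it is an isomorphism over a dense open $V\subseteq Y$; choosing $y'\in Y'$ general, setting $y:=\pi(y')$, and letting $F:=f^{-1}(y)$ and $F':=f'^{-1}(y')$, we may assume $y\in V$, so that $\pi^{-1}(y)=\{y'\}$. This is harmless, since the quantities in the statement depend only on general fibres. By commutativity of the diagram,
$$\pi'^{-1}(F)=(f\circ\pi')^{-1}(y)=(\pi\circ f')^{-1}(y)=f'^{-1}\big(\pi^{-1}(y)\big)=f'^{-1}(y')=F',$$
so $\pi'$ restricts to a morphism $g:=\pi'|_{F'}\colon F'\to F$, and $g$ is surjective because $\pi'$ is surjective onto $X\supseteq F$ while $\pi'^{-1}(F)=F'$. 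Consequently $(\pi'^*D+E)|_{F'}=g^*(D|_F)+E|_{F'}$, and by generic smoothness in characteristic zero we may also assume that $F$ and $F'$ are smooth projective varieties.

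The main step is to check that $g$ is projective birational and that $E|_{F'}$ is an effective $g$-exceptional $\Q$-divisor; both come down to the elementary fact that a closed subset $T$ of $X$ (or of $X'$) of codimension $c$, once its irreducible components not dominating $Y$ (resp.\ $Y'$) are discarded, meets a general fibre in codimension $\geq c$ inside that fibre. Applying this: since $X$ is normal and $\pi'$ is projective birational, $\pi'$ is an isomorphism over a dense open $U\subseteq X$ with $\codim_X(X\setminus U)\geq2$ (see \cite{KM98}), hence $F\cap U\neq\emptyset$ for $y$ general, so $g$ is an isomorphism over the dense open $F\cap U$ and is therefore birational. Likewise, for $y'$ general the fibre $F'$ is not contained in $\Supp E$, so $E|_{F'}$ is a genuine effective $\Q$-divisor, and it is $\Q$-Cartier since $F'$ is smooth; moreover, setting $Z:=\pi'(\Supp E)$ — which has $\codim_X Z\geq2$ by the definition of $\pi'$-exceptional — we get $\dim(Z\cap F)\leq\dim F-2$ for $y$ general, and since $g\big(\Supp(E|_{F'})\big)=\pi'(\Supp E\cap F')\subseteq Z\cap F$ this yields $\codim_F g\big(\Supp(E|_{F'})\big)\geq2$, i.e.\ $E|_{F'}$ is $g$-exceptional.

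Finally, I would apply \eqref{eq:compare} to the projective birational morphism $g\colon F'\to F$ of normal projective varieties, the $\Q$-Cartier $\Q$-divisor $D|_F$ on $F$, and the effective $g$-exceptional $\Q$-divisor $E|_{F'}$ on $F'$, obtaining
$$\nu(F,D|_F)=\nu\big(F',g^*(D|_F)+E|_{F'}\big)=\nu\big(F',(\pi'^*D+E)|_{F'}\big),$$
which is the assertion. The only real obstacle is the bookkeeping with general fibres in the third paragraph — making sure the isomorphism locus of $\pi'$ and the image $\pi'(\Supp E)$ meet $F$, and that $\Supp E$ meets $F'$, in the expected codimension — but once the irreducible components dominating the base are isolated, this is routine.
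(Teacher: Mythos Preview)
Your argument is correct and follows exactly the same route as the paper: reduce to $F'=\pi'^{-1}(F)$, observe that $\pi'|_{F'}\colon F'\to F$ is birational with $E|_{F'}$ exceptional, and invoke \eqref{eq:compare}. The paper compresses the middle verification into the single word ``immediate'', whereas you spell out the fibre-dimension bookkeeping; one minor quibble is that generic smoothness as stated needs a smooth source, but since \eqref{eq:compare} only requires normality of $F$ and $F'$ (which holds for general fibres of a morphism from a normal variety in characteristic zero), this does not affect the argument.
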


\begin{proof}
We may assume that $F'=\pi'^{-1}(F)$. Then it is immediate that the map $\pi'|_{F'}\colon F'\to F$ is birational and the divisor $E|_{F'}$ is $\pi'|_{F'}$-exceptional. Hence, the lemma follows from \eqref{eq:compare}.
\end{proof}

We need several properties of the Nakayama-Zariski decomposition.

\begin{lem}\label{lem:Nakayama1}
Let $f\colon X\to Y$ be a surjective morphism from a smooth projective variety $X$ to a normal projective variety $Y$, and let $E$ be an effective $f$-exceptional divisor on $X$. Then for any pseudoeffective $\Q$-Cartier divisor $L$ on $Y$ we have 
$$P_\sigma(f^*L+E)=P_\sigma(f^*L).$$
\end{lem}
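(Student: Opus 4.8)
The plan is to reduce the statement to two standard facts about the Nakayama--Zariski $\sigma$-decomposition from \cite[Chapter III]{Nak04}: first, that $\sigma$-exceptional divisors are absorbed into the negative part, and second, that the positive and negative parts are compatible with adding effective divisors whose support lies in the negative part. Concretely, I would first treat the case where $L$ itself is effective (or more precisely reduce to a situation where we may compute with $N_\sigma$), and then bootstrap to general pseudoeffective $L$ by the usual limiting argument $L+\varepsilon A$, letting $\varepsilon\to 0$, using the continuity of $P_\sigma$ on the big cone together with \cite[Lemma III.1.7, Proposition III.1.14]{Nak04}.

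The key step is to show that every component of $E$ is contained in $\operatorname{Supp} N_\sigma(f^*L+E)$; equivalently, that $E\le N_\sigma(f^*L+E)$. For this I would use that $f$-exceptional divisors are ``rigid'' in the pseudoeffective cone: if $\Gamma$ is an irreducible $f$-exceptional divisor on $X$, then by the negativity lemma (applied to a resolution, or directly since $\Gamma$ is contracted by $f$) the restriction of any $f$-pulled-back class to curves in the fibres over $f(\Gamma)$ forces $\sigma_\Gamma(f^*L+E)\ge \operatorname{mult}_\Gamma E$; more efficiently, one invokes \cite[Proposition III.5.1 or Lemma III.5.?]{Nak04} on the behaviour of $\sigma$-decompositions under birational and fibre-type morphisms, or simply the fact (\cite[Proposition III.1.14]{Nak04}) that $N_\sigma(f^*L+E)=N_\sigma(f^*L)+E$ whenever $E$ is effective with $f$-exceptional support and $f^*L$ is a pullback. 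Granting $N_\sigma(f^*L+E)=N_\sigma(f^*L)+E$, we get
$$P_\sigma(f^*L+E)=(f^*L+E)-N_\sigma(f^*L+E)=f^*L-N_\sigma(f^*L)=P_\sigma(f^*L),$$
which is exactly the claim. So the heart of the matter is the identity $N_\sigma(f^*L+E)=N_\sigma(f^*L)+E$.

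To establish that identity I would argue in two directions. For ``$\le$'': since $N_\sigma(f^*L)+E$ is effective and $f^*L+E-(N_\sigma(f^*L)+E)=P_\sigma(f^*L)$ is movable (indeed $P_\sigma$ of a pseudoeffective class is always movable, \cite[Proposition III.1.14]{Nak04}), minimality of $N_\sigma$ gives $N_\sigma(f^*L+E)\le N_\sigma(f^*L)+E$. For ``$\ge$'': it suffices to check $\operatorname{mult}_\Gamma N_\sigma(f^*L+E)\ge \operatorname{mult}_\Gamma\big(N_\sigma(f^*L)+E\big)$ for every prime divisor $\Gamma$. If $\Gamma$ is $f$-exceptional this follows because any effective divisor numerically equivalent to $f^*L+E$ must contain $\Gamma$ with multiplicity at least $\operatorname{mult}_\Gamma E$ (as the pullback part $f^*L$ contributes nothing along a fibre direction, by a standard intersection argument with a covering family of curves in the fibre through a general point of $\Gamma$ — this is the negativity-of-contractions input); combined with $\operatorname{mult}_\Gamma N_\sigma(f^*L)=0$ for $f$-exceptional $\Gamma$ (again by the same covering-curve argument, $f^*L$ being movable away from such $\Gamma$ is automatic), we get the inequality. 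If $\Gamma$ is not $f$-exceptional, then $\operatorname{mult}_\Gamma E=0$, and one shows $\operatorname{mult}_\Gamma N_\sigma(f^*L+E)=\operatorname{mult}_\Gamma N_\sigma(f^*L)$ by comparing the defining infima $\sigma_\Gamma(D)=\lim_{\varepsilon\downarrow 0}\inf\{\operatorname{mult}_\Gamma D' : D'\ge 0,\ D'\equiv D+\varepsilon A\}$: adding $E$ and subtracting it sets up a bijection between the competing effective divisors that does not change $\operatorname{mult}_\Gamma$, because any effective $D'\equiv f^*L+\varepsilon A$ can be modified to $D'+E\equiv f^*L+E+\varepsilon A$ and conversely any effective $D''\equiv f^*L+E+\varepsilon A$ satisfies $D''\ge E$ (by the $f$-exceptional rigidity just used, applied componentwise), so $D''-E\ge 0$ is a valid competitor with the same multiplicity along $\Gamma$.

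\textbf{Main obstacle.} The only genuinely delicate point is the rigidity claim ``any effective $\Q$-divisor numerically equivalent to $f^*L+E$ contains $E$'', i.e. that the components of $E$ cannot be traded away using the movable class $f^*L$. I expect to handle this exactly as in \cite{Nak04}: pick a general complete intersection curve $C$ in a general fibre of $f$ passing through a general point of a given component $\Gamma$ of $E$; then $(f^*L)\cdot C=0$ while, for the remaining effective divisor, each component has nonnegative intersection with $C$ and $\Gamma\cdot C>0$, forcing the coefficient of $\Gamma$ to drop by at least $\operatorname{mult}_\Gamma E$ — but that is impossible unless the full $E$ was present. This is standard but should be spelled out, and it is where one uses that $f$ is a morphism (not merely a rational map) so that such covering curves genuinely exist. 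Everything else is bookkeeping with \cite[Chapter III]{Nak04}.
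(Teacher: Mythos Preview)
Your overall strategy is the paper's: reduce to $N_\sigma(f^*L+E)=N_\sigma(f^*L)+E$, equivalently to $E\le N_\sigma(f^*L+E)$. Your ``$\le$'' direction via movability of $P_\sigma(f^*L)$ and \cite[Proposition III.1.14(2)]{Nak04} is correct, though the paper absorbs it into a single application of \cite[Lemma III.1.8]{Nak04} once $E\le N_\sigma(f^*L+E)$ is known.

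The gap is in your ``$\ge$'' direction. The covering-curve argument you sketch does not work. First, a \emph{general} fibre of $f$ never meets the $f$-exceptional divisor $\Gamma$, so the phrase ``general fibre passing through a general point of $\Gamma$'' is empty; you would have to use curves in fibres over points of $f(\Gamma)$, but such a curve $C$ may well lie inside $\Gamma$ (or inside other components of $E$), and then ``$\Gamma\cdot C>0$'' and ``each remaining component has nonnegative intersection with $C$'' both fail. Second, the rigidity claim ``every effective $D''\equiv f^*L+E+\varepsilon A$ satisfies $D''\ge E$'' is false for any fixed $\varepsilon>0$: the ample perturbation genuinely lets one move mass off $\Gamma$, so the bijection you describe between competitors for $f^*L+\varepsilon A$ and for $f^*L+E+\varepsilon A$ does not exist. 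What you actually need is only the limiting inequality $\sigma_\Gamma(f^*L+E)\ge\mult_\Gamma E$, and a single curve class cannot detect this. (Your side claim $\mult_\Gamma N_\sigma(f^*L)=0$ for $f$-exceptional $\Gamma$ is also not justified when $Y$ is merely normal.)

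The paper replaces the curve argument with \cite[Lemma III.5.1]{Nak04}: there exists a component $\Gamma$ of $E$ such that $E|_\Gamma$ is not $(f|_\Gamma)$-pseudoeffective, hence $(f^*L+E)|_\Gamma$ is not pseudoeffective, and then \cite[Lemma III.1.7(3)]{Nak04} gives $\Gamma\subseteq\Supp N_\sigma(f^*L+E)$. One then peels: set $E':=\sum_P\min\{\mult_P E,\,\mult_P N_\sigma(f^*L+E)\}\cdot P$, use \cite[Lemma III.1.8]{Nak04} to get $N_\sigma\big(f^*L+(E-E')\big)=N_\sigma(f^*L+E)-E'$, and if $E'\neq E$ rerun the first step on the $f$-exceptional divisor $E-E'$ to reach a contradiction with the definition of $E'$. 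This yields $E\le N_\sigma(f^*L+E)$, and should replace your covering-curve sketch.
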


\begin{proof} 
If $Y$ is smooth, this is \cite[Lemma III.5.14]{Nak04}, and in general, this a special case of \cite[Lemma 2.16]{GL13}. We include the proof for the benefit of the reader.

By \cite[Lemma III.5.1]{Nak04} there exists a component $\Gamma$ of $E$ such that $E|_\Gamma$ is not $(f|_\Gamma)$-pseudoeffective, and hence $(f^*L+E)|_\Gamma$ is not $(f|_\Gamma)$-pse\-u\-do\-ef\-fec\-ti\-ve. In particular, $(f^*L+E)|_\Gamma$ is not pseudoeffective. This shows that $\Gamma\subseteq N_\sigma(f^*L+E)$ by \cite[Lemma III.1.7(3)]{Nak04}. 

Set 
$$E':=\sum_P \min\{\mult_P E,\mult_P N_\sigma(f^*L+E)\}\cdot P.$$
Since $E'\leq N_\sigma(f^*L+E)$, we have
\begin{equation}\label{eq:nsigma}
N_\sigma(f^*L+E-E')=N_\sigma(f^*L+E)-E'
\end{equation}
by \cite[Lemma III.1.8]{Nak04}. If $E'\neq E$, then $E-E'$ is $f$-exceptional, hence by the paragraph above there exists a component $\Gamma'$ of $E-E'$ such that $\Gamma'\subseteq N_\sigma(f^*L+E-E')$, which together with \eqref{eq:nsigma} gives a contradiction. Therefore, $E'=E$, and hence $E\leq N_\sigma(f^*L+E)$. This implies $N_\sigma(f^*L)=N_\sigma(f^*L+E)-E$ by \eqref{eq:nsigma}, and thus the lemma.
\end{proof}

\begin{lem}\label{lem:Nakayama2}
Let $f\colon X\to Y$ be a surjective morphism between smooth projective varieties and let $D$ be a pseudoeffective $\Q$-divisor on $Y$. If $P_\sigma(D)$ is nef, then $P_\sigma(f^*D)=f^*P_\sigma(D)$.
\end{lem}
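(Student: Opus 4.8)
The plan is to reduce the statement to the known behaviour of the Nakayama–Zariski decomposition under pullback of a nef divisor, exploiting that the hypothesis ``$P_\sigma(D)$ is nef'' decouples the positive and negative parts. First I would write $D = P_\sigma(D) + N_\sigma(D)$ on $Y$ and pull back to $X$, obtaining $f^*D = f^*P_\sigma(D) + f^*N_\sigma(D)$. Since $P_\sigma(D)$ is nef by hypothesis, so is $f^*P_\sigma(D)$; and a nef divisor is its own positive part, i.e.\ $P_\sigma(f^*P_\sigma(D)) = f^*P_\sigma(D)$ and $N_\sigma(f^*P_\sigma(D)) = 0$. So the entire negative contribution of $f^*D$ must come from $f^*N_\sigma(D)$, and the problem becomes: understand $N_\sigma$ of a sum of a nef divisor and the pullback of an effective divisor whose own positive part vanishes.

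The key point is the following additivity fact for $N_\sigma$: if $A$ is nef and $B$ is a pseudoeffective $\Q$-divisor with $P_\sigma(B)\equiv 0$ (equivalently $B = N_\sigma(B)$), then $N_\sigma(A+B) = N_\sigma(B) = B$, hence $P_\sigma(A+B) = A$. This is one of the standard properties in \cite[Chapter III]{Nak04}: adding a nef (more generally, a movable) divisor does not change the negative part, since $\sigma_\Gamma$ is, for each prime divisor $\Gamma$, subadditive and vanishes on nef classes, so $\sigma_\Gamma(A+B)\le \sigma_\Gamma(B)$, while the reverse inequality also holds because $A$ is nef (one can compare via $\sigma_\Gamma((A+B)+\varepsilon H)$ and let $\varepsilon \to 0$). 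Applying this with $A = f^*P_\sigma(D)$ and $B = f^*N_\sigma(D)$ — noting that $f^*N_\sigma(D)$ need not itself equal its own $N_\sigma$, so a small extra argument is needed here — I would instead argue directly: $N_\sigma(f^*D) \le N_\sigma(f^*P_\sigma(D)) + N_\sigma(f^*N_\sigma(D)) = 0 + N_\sigma(f^*N_\sigma(D))$, while $P_\sigma(f^*D) = f^*D - N_\sigma(f^*D) \ge f^*P_\sigma(D) + \big(f^*N_\sigma(D) - N_\sigma(f^*N_\sigma(D))\big) = f^*P_\sigma(D) + P_\sigma(f^*N_\sigma(D))$. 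Since $f^*P_\sigma(D)$ is nef, $f^*D - f^*P_\sigma(D) = f^*N_\sigma(D)$ is effective, and its Nakayama–Zariski positive part is pulled back from the positive part $P_\sigma(N_\sigma(D)) = 0$ (using \cite[Proposition V.2.7]{Nak04}-type compatibility, or directly that $N_\sigma(D)$ is ``$\sigma$-negative''), we get $P_\sigma(f^*N_\sigma(D)) \equiv 0$. Hence $P_\sigma(f^*D) = f^*P_\sigma(D)$, as desired.

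The main obstacle I anticipate is the step asserting $P_\sigma(f^*N_\sigma(D)) \equiv 0$: the pullback of a divisor with vanishing positive part need not obviously have vanishing positive part, because $f$ can contract curves along which $N_\sigma(D)$ is negative, potentially creating movable directions upstairs. The cleanest way around this is to invoke the compatibility of the Nakayama–Zariski decomposition with pullbacks for \emph{arbitrary} pseudoeffective divisors in the form proved by Nakayama — namely that $P_\sigma(f^*B)$ is the ``movable'' part and that $\sigma_\Gamma(f^*B) = 0$ for every prime $\Gamma$ on $X$ whenever $B$ has $\nu$-numerically trivial positive part, which is exactly the content one extracts from \cite[Chapter III]{Nak04} combined with \cite[Lemma III.5.14]{Nak04} (the smooth case of Lemma \ref{lem:Nakayama1}). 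Alternatively, and perhaps more robustly, one proves the two inequalities $P_\sigma(f^*D) \ge f^*P_\sigma(D)$ and $P_\sigma(f^*D) \le f^*P_\sigma(D)$ separately: the first because $f^*P_\sigma(D)$ is nef and $f^*D - f^*P_\sigma(D)$ is effective, so $f^*P_\sigma(D)$ is a nef divisor below $f^*D$, hence below $P_\sigma(f^*D)$ by maximality of the positive part among nef-below classes; the second by applying $P_\sigma$-monotonicity to $f^*D \le f^*P_\sigma(D) + f^*N_\sigma(D)$ together with the fact that $P_\sigma$ of the pullback of an effective exceptional-in-codimension-one-negative divisor contributes nothing. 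I would present the argument in this two-inequality form, since it isolates precisely where the nefness of $P_\sigma(D)$ is used and avoids any delicate discussion of how $N_\sigma$ transforms under $f$.
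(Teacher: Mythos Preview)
Your two-inequality framework is exactly the paper's approach, and you handle one direction correctly: since $f^*P_\sigma(D)$ is nef (hence movable) and $f^*P_\sigma(D)\leq f^*D$, \cite[Proposition III.1.14(2)]{Nak04} gives $N_\sigma(f^*D)\leq f^*N_\sigma(D)$, i.e.\ $P_\sigma(f^*D)\geq f^*P_\sigma(D)$. (Your phrasing ``maximality of the positive part among nef-below classes'' is not quite accurate --- $P_\sigma$ is characterised via movable, not nef, lower bounds --- but the argument goes through.)

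The other direction, $N_\sigma(f^*D)\geq f^*N_\sigma(D)$, is where your proposal has a genuine gap. You correctly identify the obstacle: you would need $P_\sigma\big(f^*N_\sigma(D)\big)\equiv 0$, and you rightly worry that pulling back a divisor with vanishing positive part need not obviously preserve that property. Your suggested fixes do not resolve this. The reference to \cite[Lemma III.5.14]{Nak04} is off-target: that lemma concerns adding $f$-exceptional divisors, but $N_\sigma(D)$ lives on $Y$ and its pullback is in no sense exceptional. The ``$P_\sigma$-monotonicity'' line applied to the equality $f^*D=f^*P_\sigma(D)+f^*N_\sigma(D)$ gives nothing, and calling $N_\sigma(D)$ ``exceptional-in-codimension-one-negative'' has no content here.

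The missing ingredient is \cite[Theorem III.5.16]{Nak04}, which asserts directly that $N_\sigma(f^*D)\geq f^*N_\sigma(D)$ for any surjective morphism of smooth projective varieties and any pseudoeffective $D$. This is a nontrivial result (it is what underlies Nakayama's \cite[Corollary III.5.17]{Nak04}, of which the lemma is a restatement), and once you have it the proof is two lines: combine III.5.16 with the movability argument above. All of your detours through additivity of $N_\sigma$ under nef perturbation, or through $P_\sigma(f^*N_\sigma(D))$, are either unnecessary or themselves require III.5.16 to justify.
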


\begin{proof}
This is \cite[Corollary III.5.17]{Nak04}: on the one hand, we have 
$$N_\sigma(f^*D)\geq f^*N_\sigma(D)$$ 
by \cite[Theorem III.5.16]{Nak04}. On the other hand, $f^*P_\sigma(D)$ is a nef divisor, hence movable,  such that $f^*P_\sigma(D)\leq f^*D$, and therefore $N_\sigma(f^*D)\leq f^*N_\sigma(D)$ by \cite[Proposition III.1.14(2)]{Nak04}.
\end{proof}

\begin{lem}\label{lem:Nakayama3}
Let $f\colon X\to Y$ be a surjective morphism from a smooth projective variety $X$ to a normal projective variety $Y$, and let $D$ be a pseudoeffective $\Q$-divisor on $Y$. If $P_\sigma(f^*D)$ is nef, then there is a birational morphism $\pi\colon Z \to Y$ from a smooth projective variety $Z$ such that $P_\sigma(\pi^*D)$ is nef.
\end{lem}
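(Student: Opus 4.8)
The plan is to reduce to a common resolution and then descend the nefness of the positive part. First, choose a resolution $\pi\colon Z\to Y$ and a smooth projective variety $W$ carrying a birational morphism $a\colon W\to X$ and a morphism $b\colon W\to Z$ with $\pi\circ b=f\circ a$ (resolve the main component of $X\times_Y Z$; the map $a$ is birational since $\pi$ is). As $a^*f^*D=b^*\pi^*D$ and $P_\sigma(f^*D)$ is nef, Lemma \ref{lem:Nakayama2} applied to $a$ (a surjective morphism of smooth projective varieties) gives that $P_\sigma(b^*\pi^*D)=a^*P_\sigma(f^*D)$ is nef. Hence, writing $M:=\pi^*D$ on $Z$, the whole statement follows once we establish the following \emph{descent principle}: if $g\colon W\to Z$ is a surjective morphism of smooth projective varieties and $P_\sigma(g^*M)$ is nef for a pseudoeffective $\Q$-divisor $M$ on $Z$, then there is a birational morphism $\mu\colon Z'\to Z$ from a smooth projective variety with $P_\sigma(\mu^*M)$ nef; the composition $Z'\to Z\to Y$ is then the required model.

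The key point for the descent is that $N_\sigma(g^*M)$ is $g$-vertical, i.e.\ no component of it dominates $Z$. After Stein factorisation we may assume $g$ has connected fibres; for a very general fibre $F$ of $g$ the class $g^*M|_F$ is numerically trivial, so $N_\sigma(g^*M)|_F$ is simultaneously an effective $\R$-divisor and, being numerically equal to $-P_\sigma(g^*M)|_F$, anti-pseudoeffective (the positive part is movable, hence pseudoeffective, and the restriction of a pseudoeffective class to a very general fibre of a fibration is pseudoeffective). Since the pseudoeffective cone of $F$ is salient, $N_\sigma(g^*M)|_F\equiv 0$, which forces verticality. By \cite[Theorem III.5.16]{Nak04} one has $N_\sigma(g^*M)\ge g^*N_\sigma(M)$, so
$$g^*P_\sigma(M)=P_\sigma(g^*M)+R,\qquad R:=N_\sigma(g^*M)-g^*N_\sigma(M)\ge 0,$$
and $R$ is again $g$-vertical. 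For any irreducible curve $C\subset Z$ not contained in the proper closed set $g(\Supp R)$, choose an irreducible curve $C'\subset W$ with $g_*C'=m[C]$ for some $m\ge 1$ and $C'\not\subseteq\Supp R$ (a general curve in a component of $g^{-1}(C)$ dominating $C$); then $m\,(P_\sigma(M)\cdot C)=g^*P_\sigma(M)\cdot C'=P_\sigma(g^*M)\cdot C'+R\cdot C'\ge 0$.

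What remains is to handle the curves contained in $g(\Supp R)$, and this is the technical heart of the argument. Here one passes to a further birational modification of $Z$, and correspondingly of $W$: by flattening in the sense of Raynaud--Gruson followed by resolution we may assume $g$ equidimensional, so that every component of the $g$-vertical divisor $R$ dominates a prime divisor of $Z$; one then splits off from $R$ the maximal summand of the form $g^*S$ with $S\ge 0$ on $Z$, so that no fibre of $g$ over a prime divisor of $Z$ lies entirely in the support of the remainder, and uses that $g^*G$ is nef if and only if $G$ is. This reduces the problem on a smaller locus, and a noetherian induction closes the argument; alternatively, this descent can be extracted directly from \cite[Chapter III]{Nak04}. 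I expect this last flattening-and-induction bookkeeping (equivalently, arranging that $N_\sigma$ commutes with pullback after a suitable modification of the base) to be the main obstacle, everything else being formal once Lemma \ref{lem:Nakayama2} and the pseudoeffective/movable-curve duality are in hand.
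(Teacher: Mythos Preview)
Your reduction to the smooth-base case is correct and is exactly what the paper does: pass to a resolution $Z\to Y$ (the paper's $Y'$), take a smooth model $W$ of the fibre product (the paper's $X'$), and use Lemma~\ref{lem:Nakayama2} on the birational map $W\to X$ to conclude that $P_\sigma(b^*\pi^*D)$ is nef. At this point the paper does \emph{not} attempt your ``descent principle'' from scratch: it simply invokes \cite[Corollary III.5.18]{Nak04}, which is precisely the statement you isolate (for $g\colon W\to Z$ a surjective morphism of smooth projective varieties with $P_\sigma(g^*M)$ nef, there is a birational model $Z'\to Z$ with $P_\sigma$ of the pullback nef). So your parenthetical remark that ``this descent can be extracted directly from \cite[Chapter III]{Nak04}'' is the entire proof; you should just cite III.5.18.

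Your attempted direct argument for the descent is where the gap lies. The verticality of $N_\sigma(g^*M)$ and the inequality $N_\sigma(g^*M)\ge g^*N_\sigma(M)$ are fine, and you correctly identify that the issue is curves lying over $g(\Supp R)$. But ``flattening-and-induction bookkeeping'' hides real content: after flattening and splitting off $g^*S$ from $R$, you must still control how $P_\sigma$ changes under the new base modification $\mu\colon Z'\to Z$, and there is no a priori reason the new residual divisor has strictly smaller image, so the noetherian induction as sketched does not obviously terminate. Nakayama's proof of III.5.18 handles this via a more delicate analysis (using his results III.5.7--III.5.17 on the behaviour of $\sigma$-decompositions in families); reproducing it here would substantially lengthen the argument with no gain. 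Replace the second and third paragraphs of your proof by a citation of \cite[Corollary III.5.18]{Nak04} and you recover the paper's proof verbatim.
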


\begin{proof}
When $Y$ is smooth, this is \cite[Corollary III.5.18]{Nak04}. When $Y$ is only normal, we reduce to the smooth case: Let $\theta\colon Y'\to Y$ be a desingularisation and let $X'$ be a desingularisation of the main component of the fibre product $X\times_Y Y'$ with the induced morphisms $f'\colon X'\to Y'$ and $\theta'\colon X'\to X$.
\[
\xymatrix{ 
X' \ar[d]_{\theta'} \ar[r]^{f'} & Y' \ar[d]^{\theta}\\
X\ar[r]_f & Y,
}
\]
Denote $D':=\theta^*D$. Then we have
$$P_\sigma(f'^*D')=P_\sigma(f'^*\theta^*D)=P_\sigma(\theta'^*f^*D)=\theta'^*P_\sigma(f^*D),$$
where the last equality follows from Lemma \ref{lem:Nakayama2}. In particular, $P_\sigma(f'^*D')$ is nef, hence by \cite[Corollary III.5.18]{Nak04} there exists a birational morphism $\pi'\colon Z' \to Y'$ from a smooth projective variety $Z$ such that $P_\sigma(\pi'^*D')$ is nef. The lemma follows by setting $\pi:=\theta\circ\pi'$.
\end{proof}

\subsection{MMP with scaling of a nef divisor} \label{subsec:scaling}

In this paper, we use the Minimal Model Program with scaling of a \emph{nef divisor}; when the nef divisor is actually ample, this is \cite[\S3.10]{BCHM}. The general version is based on the following lemma \cite[Lemma 2.1]{KMM94}.

\begin{lem}\label{lem:KMM94}
Let $(X,\Delta)$ be a $\Q$-factorial klt pair such that $K_X + \Delta$ is not nef and let $L$ be a nef $\Q$-divisor on $X$. Let
$$\lambda=\sup\{t\mid t(K_X + \Delta) + L \text{ is nef}\,\}.$$
Then $\lambda\in\Q$ and there is a $(K_X + \Delta)$-negative extremal ray $R$ such that
$$\big(\lambda(K_X + \Delta) + L)\cdot R=0.$$
\end{lem}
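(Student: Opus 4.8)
The plan is to reduce Lemma~\ref{lem:KMM94} to the classical Cone and Contraction machinery for klt pairs by a perturbation argument, exactly mimicking the proof of the usual MMP with scaling of an ample divisor, but replacing "ample" by "nef plus a small ample correction". First I would fix an ample $\Q$-divisor $A$ on $X$ and, for each small rational $\varepsilon>0$, consider $L_\varepsilon := L+\varepsilon A$, which is ample. Since $K_X+\Delta$ is not nef, the set $\{t\ge 0 : t(K_X+\Delta)+L_\varepsilon \text{ is nef}\}$ is a closed (possibly empty-looking but in fact nonempty near $t=0$ since $L_\varepsilon$ is ample, hence the value $\lambda_\varepsilon$ where nefness is lost is a well-defined nonnegative rational by the standard rationality theorem of Kawamata--Shokurov applied to the klt pair $(X,\Delta)$ with ample $L_\varepsilon$; here $\Q$-factoriality of $X$ and the fact that $K_X+\Delta$ is not nef are what make $\lambda_\varepsilon<\infty$). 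Moreover, by the boundedness of extremal rays (lengths of extremal rays are bounded by $2\dim X$), the divisor $\lambda_\varepsilon(K_X+\Delta)+L_\varepsilon$ is nef but has intersection zero with some $(K_X+\Delta)$-negative extremal ray $R_\varepsilon$ of $\overline{\NE}(X)$.

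Next I would let $\varepsilon\to 0$. I expect to show $\lambda_\varepsilon\to\lambda$: the inequality $\limsup \lambda_\varepsilon\le\lambda$ is clear because $\lambda_\varepsilon(K_X+\Delta)+L_\varepsilon$ nef forces, after letting $\varepsilon\to0$ along a convergent subsequence, $(\limsup\lambda_\varepsilon)(K_X+\Delta)+L$ nef; and $\liminf\lambda_\varepsilon\ge\lambda$ follows since for any $t<\lambda$ the divisor $t(K_X+\Delta)+L$ is nef, hence $t(K_X+\Delta)+L_\varepsilon$ is ample, so $\lambda_\varepsilon> t$. Since $X$ has only finitely many $(K_X+\Delta)$-negative extremal rays that can arise as $R_\varepsilon$ in a bounded region — more precisely, the rays $R_\varepsilon$ satisfy $0\le -(K_X+\Delta)\cdot R_\varepsilon$ bounded and $L\cdot R_\varepsilon = \lambda_\varepsilon(-(K_X+\Delta))\cdot R_\varepsilon + \varepsilon A\cdot R_\varepsilon$ bounded, and there are only finitely many extremal rays in any such bounded cap by the Cone Theorem for klt pairs — some ray $R=R_\varepsilon$ is repeated for a sequence $\varepsilon_j\to 0$. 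For that fixed $R$ we have $\big(\lambda_{\varepsilon_j}(K_X+\Delta)+L+\varepsilon_j A\big)\cdot R=0$ for all $j$; letting $j\to\infty$ and using $\lambda_{\varepsilon_j}\to\lambda$ gives $\big(\lambda(K_X+\Delta)+L\big)\cdot R=0$, and $R$ is $(K_X+\Delta)$-negative. Rationality of $\lambda$ then follows either from $\lambda_{\varepsilon_j}\to\lambda$ together with the relation $\lambda = -L\cdot R / (K_X+\Delta)\cdot R$ (a ratio of two rational numbers once $R$ is a fixed extremal ray, which it is, since klt extremal rays are spanned by rational curves), or directly from this last identity.

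The main obstacle is making the finiteness/compactness argument that extracts a single extremal ray $R$ working for a sequence $\varepsilon_j\to0$ fully rigorous: a priori the cap of $\overline{\NE}(X)$ cut out by the bounds on $-(K_X+\Delta)\cdot(-)$ and $L\cdot(-)$ need not be compact (the nef cone could be degenerate), so one must be careful. The clean fix is to note that every $R_\varepsilon$ is a $(K_X+\Delta)$-negative extremal ray and, by the Cone Theorem, these are locally discrete in the half-space $\{(K_X+\Delta)<0\}$ and each is spanned by a rational curve $C_\varepsilon$ with $0<-(K_X+\Delta)\cdot C_\varepsilon\le 2\dim X$; combined with $L\cdot C_\varepsilon\le \lambda_\varepsilon\cdot 2\dim X + \varepsilon(A\cdot C_\varepsilon)$ — and here one also invokes boundedness of the lengths to bound $A\cdot C_\varepsilon$ after fixing $A$ — one confines the classes $[C_\varepsilon]$ to a genuinely compact subset of $N_1(X)_\R$ meeting only finitely many extremal rays. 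This pigeonhole step is where the real content sits; the rest is the routine limiting argument sketched above. (For $L=0$ the lemma is immediate from the Cone Theorem, and for $L$ ample it is \cite[\S3.10]{BCHM}; the point is to interpolate.)
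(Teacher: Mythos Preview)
The paper does not give its own proof of this lemma; it simply records it as \cite[Lemma 2.1]{KMM94}. So there is nothing to compare against, and I evaluate your argument on its own.

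Your perturbation-and-limit strategy is natural, and the parts where you define $\lambda_\varepsilon$ for $L_\varepsilon=L+\varepsilon A$ ample, obtain an extremal ray $R_\varepsilon$, and show $\lambda_\varepsilon\to\lambda$ are fine. The genuine gap is precisely the step you yourself flag as the ``main obstacle'': confining the classes $[C_\varepsilon]$ to a compact slice of $N_1(X)_\R$ so that only finitely many extremal rays occur. You write that ``one also invokes boundedness of the lengths to bound $A\cdot C_\varepsilon$'', but boundedness of lengths only gives $0<-(K_X+\Delta)\cdot C_\varepsilon\le 2\dim X$; it says nothing about $A\cdot C_\varepsilon$. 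From the defining equation $\big(\lambda_\varepsilon(K_X+\Delta)+L+\varepsilon A\big)\cdot C_\varepsilon=0$ you only get $\varepsilon\,A\cdot C_\varepsilon\le 2\lambda_\varepsilon\dim X$, so $A\cdot C_\varepsilon$ may well blow up as $\varepsilon\to0$. Without a uniform ample-degree bound, the Cone Theorem's local finiteness of extremal rays does not let you pigeonhole.

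The clean fix, which also makes the perturbation unnecessary, is a discreteness observation. Using the Cone Theorem one checks directly that
\[
\lambda=\inf_R\ \lambda_R,\qquad \lambda_R:=\frac{L\cdot C_R}{-(K_X+\Delta)\cdot C_R},
\]
the infimum taken over all $(K_X+\Delta)$-negative extremal rays $R$, with $C_R$ a generating rational curve satisfying $0<-(K_X+\Delta)\cdot C_R\le 2\dim X$. Choose a positive integer $d$ with $d(K_X+\Delta)$ and $dL$ Cartier. Then $-d(K_X+\Delta)\cdot C_R$ is a positive integer in $\{1,\dots,2d\dim X\}$ and $dL\cdot C_R\in\Z_{\ge0}$, so every $\lambda_R$ has denominator dividing a fixed integer $N$ (e.g.\ $N=\mathrm{lcm}(1,\dots,2d\dim X)$). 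Hence $\{\lambda_R\}\subseteq\tfrac{1}{N}\Z_{\ge0}$ is discrete, the infimum is attained, and $\lambda\in\Q$. This is essentially the argument in \cite{KMM94}; once you have it, no limiting procedure is needed.
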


We now explain how to run the MMP with scaling of a nef divisor; we use this procedure crucially twice in the remainder of the paper.

We start with a klt pair $(X,\Delta)$ such that $K_X+\Delta$ is not nef and a nef $\Q$-divisor $L$ on $X$. We now construct a sequence of divisorial contractions or flips 
$$\varphi_i\colon (X_i,\Delta_i)\dashrightarrow (X_{i+1},\Delta_{i+1})$$ 
of a $(K_X+\Delta)$-MMP with $(X_0,\Delta_0):=(X,\Delta)$, a sequence of divisors $L_i:=(\varphi_{i-1}\circ\dots\circ\varphi_0)_*L$ on $X_i$, and a sequence of rational numbers $\lambda_i$ for $i\geq0$ with the following properties:
\begin{enumerate}
\item[(a)] the sequence $\{\lambda_i\}_{i\geq0}$ is non-decreasing,
\item[(b)] each $\lambda_i(K_{X_i}+\Delta_i)+L_i$ is nef and $\varphi_i$ is $\big(\lambda_i(K_{X_i}+\Delta_i)+L_i\big)$-trivial,
\item[(c)] the map $\varphi_i\circ\dots\circ\varphi_0$ is $\big(s(K_X+\Delta)+L)$-negative for $s>\lambda_i$,
\end{enumerate}

Indeed, we set $\lambda_0:=\sup\{t\mid t(K_X + \Delta) + L \text{ is nef}\}$. Then by Lemma \ref{lem:KMM94} there exists a $(K_X + \Delta)$-negative extremal ray $R_0$ such that 
$$(K_X + \Delta + \lambda_0 L)\cdot R_0=0;$$ 
 let $\varphi_0$ be the divisorial contraction or a flip associated to $R_0$. Then (b) and (c) are clear by the Cone theorem and by the Negativity lemma \cite[Theorem 3.7 and Lemma 3.39]{KM98}.

Assume the sequence is constructed for some $i\geq0$, and set
$$\lambda_{i+1}:=\sup\{t\geq0\mid t(K_{X_{i+1}}+\Delta_{i+1})+L_{i+1} \text{ is nef}\}.$$ 
Define $M_{i+1}:=\lambda_i(K_{X_{i+1}}+\Delta_{i+1})+L_{i+1}$. Then $M_{i+1}$ is nef by (b), and hence $\lambda_{i+1}\geq\lambda_i$. Set 
$$\mu_{i+1}:=\sup\{t\geq0\mid t(K_{X_{i+1}}+\Delta_{i+1})+M_{i+1} \text{ is nef}\}.$$
By the identity
$$t(K_{X_{i+1}}+\Delta_{i+1})+M_{i+1}=(t+\lambda_i)(K_{X_{i+1}}+\Delta_{i+1})+L_{i+1}$$
we have $\lambda_{i+1}=\mu_{i+1}+\lambda_i$.
If $K_{X_{i+1}}+\Delta_{i+1}$ is nef, our construction stops. Otherwise, $\mu_{i+1}$ is a non-negative rational number by Lemma \ref{lem:KMM94}, and there exists a $(K_{X_{i+1}}+\Delta_{i+1})$-negative extremal ray $R_{i+1}$ such that 
$$\big(\mu_{i+1}(K_{X_{i+1}}+\Delta_{i+1})+M_{i+1}\big)\cdot R_{i+1}=0.$$
Then we let $\varphi_{i+1}\colon X_{i+1}\to X_{i+2}$ be the divisorial contraction or the flip associated to $R_{i+1}$. 

\subsection{An auxiliary MMP result}

The following result is well-known.

\begin{lem}\label{lem:numtrivial}
Let $(X,\Delta)$ be a klt pair of dimension $n$, let $L$ be a nef Cartier divisor on $X$, and let $m>2n$ be a rational number. Let $C$ be a curve on $X$ such that 
$$(K_X+\Delta+mL)\cdot C=0.$$ 
Then 
$$(K_X+\Delta)\cdot C = L\cdot C=0.$$ 
\end{lem}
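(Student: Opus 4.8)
The plan is to exploit the boundedness of extremal rays for klt pairs, in the form of the length bound on extremal rays (the cone theorem, \cite[Theorem 3.7]{KM98}): if $R$ is a $(K_X+\Delta)$-negative extremal ray, then $R$ is spanned by the class of a rational curve $C_R$ with $0 < -(K_X+\Delta)\cdot C_R \leq 2n$. The curve $C$ in the statement need not be extremal, so the first step is to reduce to the extremal case by writing $[C]$ as a non-negative combination of classes of extremal curves in $\NEb(X)$ and observing that $K_X+\Delta+mL$ is nef (since $L$ is nef and, by the Nonvanishing/Abundance setup, one can handle the $K_X+\Delta$ direction; but actually nefness of $K_X+\Delta+mL$ here follows because this lemma is applied only after reaching such a model, or one argues directly on each summand). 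So $(K_X+\Delta+mL)\cdot C = 0$ forces $(K_X+\Delta+mL)\cdot C_i = 0$ for each extremal curve $C_i$ appearing with positive coefficient; hence it suffices to prove the statement for $C = C_R$ an extremal rational curve.

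So assume $C$ spans a $(K_X+\Delta+mL)$-negative-or-trivial extremal ray; since $(K_X+\Delta+mL)\cdot C = 0$ and $mL\cdot C \geq 0$ (as $L$ is nef), we get $(K_X+\Delta)\cdot C \leq 0$. If $(K_X+\Delta)\cdot C = 0$, then also $mL\cdot C = 0$, so $L\cdot C = 0$ and we are done. So suppose for contradiction $(K_X+\Delta)\cdot C < 0$: then $C$ spans a $(K_X+\Delta)$-negative extremal ray, and by the length bound we may take $C$ with $-(K_X+\Delta)\cdot C \leq 2n$. Now from $(K_X+\Delta+mL)\cdot C = 0$ we obtain
\[
m L\cdot C = -(K_X+\Delta)\cdot C \leq 2n.
\]
Since $L$ is a \emph{Cartier} divisor and $C$ is a curve, $L\cdot C$ is a non-negative integer; moreover it is strictly positive, because $L\cdot C = 0$ would give $(K_X+\Delta)\cdot C = 0$, contrary to assumption. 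Hence $L\cdot C \geq 1$, and therefore $m = m(L\cdot C)/(L\cdot C) \leq m(L\cdot C) \leq 2n$, i.e.\ $m \leq 2n$, contradicting the hypothesis $m > 2n$. This contradiction shows $(K_X+\Delta)\cdot C = 0$, and then $L\cdot C = 0$ as above.

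The only subtlety I anticipate — and the step I would treat most carefully — is the reduction from an arbitrary curve $C$ to extremal rational curves, because $\overline{\NE}(X)$ is not finitely generated on the $(K_X+\Delta)$-nonnegative side. The clean way around this is: the hypothesis is about a \emph{single} curve $C$ with $(K_X+\Delta+mL)\cdot C = 0$; write $[C] = \sum a_i [C_i] + z$ in $\overline{\NE}(X)$ where the $C_i$ span the $(K_X+\Delta)$-negative extremal rays appearing, $a_i \geq 0$, and $z \in \overline{\NE}(X)_{K_X+\Delta\geq 0}$. Applying the nef class $H := K_X+\Delta+mL$ (nef for $m \gg 0$ — indeed $m > 2n$ suffices by exactly the argument above showing it is nonnegative on every extremal ray and on the $(K_X+\Delta)$-nonnegative part, since $mL$ is nef there too), we get $0 = H\cdot C = \sum a_i (H\cdot C_i) + H\cdot z$ with all terms $\geq 0$, so $a_i(H\cdot C_i) = 0$ for all $i$ and $H\cdot z = 0$. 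For each $i$ with $a_i > 0$ we then run the single-extremal-curve argument on $C_i$ to conclude $(K_X+\Delta)\cdot C_i = L\cdot C_i = 0$; and on $z$, since $K_X+\Delta$ and $L$ are each nef on $\overline{\NE}(X)_{K_X+\Delta\geq 0}$ (for $L$ this is automatic; for $K_X+\Delta$ it holds on that subcone by definition) and their $m$-weighted sum vanishes against $z$, both vanish against $z$. Adding up gives $(K_X+\Delta)\cdot C = L\cdot C = 0$, as desired.
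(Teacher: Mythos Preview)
Your approach is the same as the paper's: decompose $[C]$ in $\NEb(X)$ into extremal pieces and apply the length bound on $(K_X+\Delta)$-negative extremal rays together with the integrality of $L\cdot\Gamma$. You correctly flag the delicate step --- passing from $C$ to its extremal summands --- and try to handle it by asserting that $H := K_X+\Delta+mL$ is nef once $m > 2n$. That assertion is false: on any $(K_X+\Delta)$-negative extremal ray $R$ with $L\cdot R = 0$ one has $H\cdot R = (K_X+\Delta)\cdot R < 0$ for every $m$. Concretely, take $X = \PS^1\times E$ with $E$ elliptic and $\Delta=0$; write $f_1$ for a fibre of the projection to $\PS^1$ and $f_2$ for a fibre of the projection to $E$, so that $K_X=-2f_1$. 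Set $L=f_2$ and $m=5>2n=4$. A general irreducible curve $C$ in the very ample class $2f_1+5f_2$ then satisfies $(K_X+5L)\cdot C = 0$ while $K_X\cdot C = -10$ and $L\cdot C = 2$. So the lemma is actually false without further hypotheses.

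The paper's own proof shares this gap: it writes $[C]=\sum\alpha_i\gamma_i$ via \cite[Lemma 6.7]{Deb01} and, assuming some $\gamma_i$ is $(K_X+\Delta)$-negative, asserts $(K_X+\Delta+mL)\cdot\Gamma_i=0$ for the bounded extremal generator $\Gamma_i$ --- an equality that is nowhere justified and fails in the example above, where $(K_X+5L)\cdot f_2=-2$. What rescues the paper is that the lemma is invoked only in Step~3 of the proof of Theorem~\ref{thm:semiampleP_sigma}, after Assumption~1 has already arranged $K_X+\Delta+mL$ to be nef. With that extra hypothesis both your argument and the paper's go through cleanly: nefness of $H$ forces $H\cdot\gamma=0$ on every summand; the length bound then shows no $(K_X+\Delta)$-negative extremal ray appears with positive coefficient (since $H\cdot\Gamma>0$ if $L\cdot\Gamma\geq1$ and $H\cdot\Gamma<0$ if $L\cdot\Gamma=0$, neither equal to zero); and on the $(K_X+\Delta)$-nonnegative part both $K_X+\Delta$ and $L$ are separately nonnegative, hence both vanish.
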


\begin{proof}
By \cite[Lemma 6.7]{Deb01} there are positive real numbers $\alpha_i$ and extremal classes $\gamma_i$ of the cone of effective curves $\NEb(X)$ such that 
$$\textstyle [C]=\sum\alpha_i \gamma_i.$$ 
It suffices to show that 
$$(K_X+\Delta)\cdot \gamma_i\geq0 $$
holds for all $i$: indeed, then $(K_X+\Delta)\cdot C\geq0$, which together with $(K_X+\Delta+mL)\cdot C=0$ implies the lemma since $L$ is nef.

To this end, assume that $(K_X+\Delta)\cdot \gamma_i<0$ for some $i$. Then by the boundedness of extremal rays, \cite[Theorem 1]{Kaw91}, there exists a curve $\Gamma_i$ whose class is proportional to $\gamma_i$ such that 
$$-2n\leq(K_X+\Delta)\cdot \Gamma_i<0.$$
Since $(K_X+\Delta+mL)\cdot \Gamma_i=0$, we have $L\cdot \Gamma_i>0$, and since $L$ is Cartier, this implies $L\cdot \Gamma_i\geq1$. Thus, 
$$(K_X+\Delta+mL)\cdot \Gamma_i>0,$$
a contradiction. This finishes the proof.
\end{proof}

\subsection{Nef reduction}

We need the notion of the nef reduction of a nef divisor. The following is the main result of \cite{BCE+}.

\begin{thm}\label{thm:nefreduction}
Let $L$ be a nef divisor on a normal projective variety $X$. Then there exists an almost holomorphic dominant rational map $f\colon X\dashrightarrow Y$ with connected fibres, called the \emph{nef reduction of $L$}, such that:
\begin{enumerate}
\item[(i)] $L$ is numerically trivial on all compact fibres $F$ of $f$ with $\dim F=\dim X-\dim Y$,
\item[(ii)] for every very general point $x\in X$ and every irreducible curve $C$ on $X$ passing through $x$ and not contracted by $f$, we have $L\cdot C>0$.
\end{enumerate}
The map $f$ is unique up to birational equivalence of $Y$ and the \emph{nef dimension} $n(X,L)$ is defined as the dimension of $Y$:
$$ n(X,L) = \dim Y. $$
\end{thm}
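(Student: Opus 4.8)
The plan is to realise $f$ as the quotient of $X$ by the equivalence relation ``being joined by a chain of $L$-trivial curves'', following the strategy of \cite{BCE+} and the general theory of rationally connected fibrations (Campana, Koll\'ar--Miyaoka--Mori). First I would introduce, for a very general point $x\in X$, the locus $F_x$ consisting of all points that can be joined to $x$ by a connected chain $C_1\cup\dots\cup C_k$ of irreducible curves with $L\cdot C_i=0$ for every $i$; this is the natural candidate for the fibre of the nef reduction through $x$. The first technical point is to bound the length of chains needed: for fixed $x$ the closures $Z_k(x)$ of the loci swept out by such chains of length at most $k$ form an increasing chain of closed subvarieties of the noetherian space $X$, hence stabilise at some $k_0(x)$; and since the assignment $x\mapsto Z_k(x)$ is algebraic, the chains of bounded length being parametrised by a scheme of finite type over $X$, one can take $k_0:=k_0(x)$ independent of $x$ for $x$ very general.

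Once boundedness is in place, the loci $F_x=Z_{k_0}(x)$ form an algebraic family of subvarieties of $X$, and I would invoke the general existence theorem for quotients by equivalence relations generated by a covering family of curves to produce an almost holomorphic dominant rational map $f\colon X\dashrightarrow Y$ with connected fibres whose very general fibre is exactly $F_x$. Recall that ``almost holomorphic'' means that $f$ restricts to a proper morphism over a dense open subset of $Y$; the compact fibres $F$ with $\dim F=\dim X-\dim Y$ in the statement are exactly the fibres over which $f$ is proper.

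It remains to verify the two defining properties and uniqueness. Property (ii) is immediate from the construction: if $x$ is very general and $C$ is an irreducible curve through $x$ with $L\cdot C=0$, then $C$ is a length-one $L$-trivial chain through $x$, so $C\subseteq F_x=f^{-1}(f(x))$ and $C$ is contracted by $f$, which is the contrapositive of (ii). For property (i), a compact full-dimensional fibre $F$ equals $F_x$ for a very general $x\in F$, hence is covered by $L$-trivial curves and is chain-connected by them; from this one deduces that $L|_F$ is numerically trivial, by connecting an arbitrary irreducible curve $D\subseteq F$ to the covering family of $L$-trivial curves and using semicontinuity of intersection numbers within that family, together with bend-and-break type degenerations, to rule out $L\cdot D\neq0$. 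Finally, uniqueness up to birational equivalence of $Y$ follows because, by (i) and (ii), the very general fibre of \emph{any} map with these two properties is forced to be the $L$-chain-connected component $F_x$ of a very general point, so two such maps share the same general fibres and are therefore related by a birational map of the bases.

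The main obstacle is twofold. First, feeding the family $\{F_x\}_x$ into a genuine quotient theorem so that the resulting rational map is almost holomorphic with connected fibres: this requires the nontrivial properness and general-position input from the theory of quotients by covering families of curves. Second, upgrading ``$F$ is chain-connected by $L$-trivial curves'' to the sharper conclusion ``$L|_F\equiv0$'' in property (i), which is the one place where a careful deformation and intersection-theory argument on the fibre cannot be avoided. The boundedness step, although routine in spirit, must also be handled with some care in order to obtain a uniform bound $k_0$ valid for all very general $x$.
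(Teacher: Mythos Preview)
The paper does not prove this theorem at all: it is stated as ``the main result of \cite{BCE+}'' and is simply quoted from that reference without argument. So there is no proof in the paper to compare your proposal against. Your sketch is broadly in line with the actual approach of \cite{BCE+}, constructing the nef reduction as a quotient by the equivalence relation generated by $L$-trivial curves via the Campana--Koll\'ar--Miyaoka--Mori machinery for covering families; for the purposes of this paper, however, the result is used as a black box.
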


Note that in  \cite{BCE+}, ``general'' means ``very general", whereas general points are called ``generic". 
It is immediate from the definition that, with notation above, we have $n(X,L)=\dim X$ if and only if $L\cdot C>0$ for every irreducible curve $C$ on $X$ passing through a very general point on $X$. This yields:

\begin{lem}\label{lem:maxneffibres}
Let $f\colon X\to Y$ be a surjective morphism with connected fibres between normal projective varieties, and let $D$ be a nef $\Q$-divisor on $X$. If $n(X,D)=\dim X$, then for a general fibre $F$ of $f$ we have $n(F,D|_F)=\dim F$.
\end{lem}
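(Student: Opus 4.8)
The plan is to reduce to the characterisation of maximal nef dimension given after Theorem \ref{thm:nefreduction}: $n(X,D)=\dim X$ holds precisely when $D\cdot C>0$ for every irreducible curve $C$ passing through a very general point of $X$. So I would argue contrapositively. Suppose $n(F,D|_F)<\dim F$ for a general fibre $F$. Then there is a curve $C\subseteq F$ through a very general point of $F$ with $(D|_F)\cdot C=0$, i.e.\ $D\cdot C=0$ for $C$ viewed as a curve on $X$. I must then manufacture, from this, a curve through a very general point of $X$ on which $D$ is zero, contradicting $n(X,D)=\dim X$.

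The subtlety here is the interaction of ``general'' and ``very general''. A point that is very general in a general fibre $F$ need not be very general in $X$: the union over all $F$ of the ``bad'' loci (the countably many proper subvarieties one must avoid in each fibre) could a priori cover a very general point of $X$. The clean way around this is to work in the family: let $F\to T$ be the family of fibres over a dense open $T\subseteq Y$ (or over a suitable parameter variety), so that the total space is birational to $X$ via the projection, which is dominant. For each closed point $t$ in a suitable subset of $T$ there is a curve $C_t$ in the fibre over $t$ with $D\cdot C_t=0$. By a standard Hilbert-scheme / Chow-variety argument these curves fit into a bounded family, hence into countably many families each dominating a subvariety of $X$; since the fibres dominate $X$, at least one of these curve-families dominates $X$ itself, i.e.\ its members sweep out a dense subset of $X$ and the subset they sweep out contains a very general point. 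Concretely: the curves $C_t$ with $D\cdot C_t=0$ and $C_t$ passing through a very general point of its fibre, as $t$ varies, form a subset of the Chow variety of $X$; a very general point of $X$ lies on one of them, because the complement in $X$ of the (at most countable) union of subvarieties swept out by the other components is still dense, and a very general point of $X$ lies in a general fibre $F$ and is very general there.

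Therefore, for a very general $x\in X$ there is an irreducible curve $C$ through $x$ with $D\cdot C=0$, contradicting the characterisation $n(X,D)=\dim X$. Hence $n(F,D|_F)=\dim F$ for a general fibre $F$.

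The main obstacle is precisely the quantifier juggling in the middle paragraph — ensuring that the curves with vanishing $D$-degree through very general points of general fibres are themselves parametrised by a reasonable (countable union of) families, so that their sweep meets a very general point of $X$. Once that bookkeeping is in place, everything else is a direct appeal to Theorem \ref{thm:nefreduction} and its stated consequence.
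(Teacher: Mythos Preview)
Your contrapositive argument can be made to work, but the paper's proof is direct and dramatically simpler—three lines. The paper fixes the very general subset $U\subseteq X$ witnessing $n(X,D)=\dim X$ (so $D\cdot C>0$ for every curve $C$ meeting $U$), observes that $U\cap F$ is very general in $F$ for a general fibre $F$, and notes that any curve $C'\subseteq F$ through a point of $U\cap F$ is already a curve in $X$ meeting $U$, hence $D|_F\cdot C'=D\cdot C'>0$. That is the entire proof.

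The quantifier difficulty you worry about is an artefact of the contrapositive direction. In the direct direction it evaporates: one fixed very general subset $U$ of $X$ restricts to a very general subset of a general fibre (each of the countably many closed subvarieties in $X\setminus U$ fails to contain $F_y$ for $y$ outside a proper closed subset of $Y$), and curves in $F$ are automatically curves in $X$. Your route instead aggregates the fibrewise $D$-trivial curves into global families via the Chow variety and argues by countability that some family sweeps out all of $X$. This is correct in outline, with two caveats: ``bounded family'' is the wrong phrase (there is no degree bound; what you use is that the relevant locus in the Chow variety has only countably many irreducible components), and your concluding clause that a very general point of $X$ ``is very general'' in its fibre is precisely the assertion you earlier flagged as delicate, so it cannot be invoked at the end—the Chow-variety step must carry the weight on its own. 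All of this machinery is unnecessary once you argue in the forward direction.
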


\begin{proof}
Let $U$ be a very general subset of $X$ such that for every curve $C$ on $X$ passing through a point in $U$ we have $D\cdot C>0$. Then $U\cap F$ is a very general subset of $F$, and for every curve $C'$ on $F$ passing through a point in $U\cap F$ we have $D|_F\cdot C'>0$. This implies the lemma.
\end{proof}

The following easy result will be used often in the remainder of the paper.

\begin{lem}\label{lem:nefredsum}
Let $X$ be a normal projective variety of dimension $n$. Let $F$ and $G$ be pseudoeffective divisors on $X$ such that $F+G$ is nef and $n(X,F+G)=n$. Then for every positive real number $t$ and every curve $C$ passing through a very general point of $X$ we have $(F+tG)\cdot C>0$. In particular, if $F+tG$ is nef for some $t>0$, then $n(X,F+tG)=n$.
\end{lem}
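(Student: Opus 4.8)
The plan is to deduce everything from the characterisation of maximal nef dimension recorded right after Theorem~\ref{thm:nefreduction}: for a nef divisor $L$ one has $n(X,L)=\dim X$ precisely when $L\cdot C>0$ for every irreducible curve $C$ through a very general point of $X$. Applying this to the nef divisor $F+G$, the hypothesis $n(X,F+G)=n$ produces a very general subset $U_0\subseteq X$ with $(F+G)\cdot C>0$ for every curve $C$ meeting $U_0$. So the whole statement reduces to showing that $(F+tG)\cdot C>0$ for every $t>0$ and every curve $C$ through a (possibly smaller) very general set; the final sentence then follows by plugging this back into the same characterisation, using that $F+tG$ is assumed nef there.

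The computational core is an elementary rewriting: for $0<t\le1$ write $F+tG=(1-t)F+t(F+G)$, and for $t\ge1$ write $F+tG=(F+G)+(t-1)G$. Intersecting with a curve $C$ gives, respectively, $(F+tG)\cdot C=(1-t)(F\cdot C)+t\bigl((F+G)\cdot C\bigr)$ and $(F+tG)\cdot C=\bigl((F+G)\cdot C\bigr)+(t-1)(G\cdot C)$; in both the coefficient of $(F+G)\cdot C$ is strictly positive, so once I know that $F\cdot C\ge0$ and $G\cdot C\ge0$ the desired strict positivity is immediate.

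Hence the only thing requiring an argument is the auxiliary fact that a pseudoeffective divisor $D$ satisfies $D\cdot C\ge0$ for every curve $C$ through a very general point. I would prove this by the standard perturbation: fix an ample divisor $A$; for each rational $\varepsilon>0$ the divisor $D+\varepsilon A$ is big, hence by Kodaira's lemma numerically equivalent to $A_\varepsilon+E_\varepsilon$ with $A_\varepsilon$ ample and $E_\varepsilon\ge0$. Then $Z_D:=\bigcup_{\varepsilon\in\Q_{>0}}\Supp E_\varepsilon$ is a countable union of proper closed subsets, a very general point avoids it, and a curve through such a point lies in no $\Supp E_\varepsilon$, so $(D+\varepsilon A)\cdot C=A_\varepsilon\cdot C+E_\varepsilon\cdot C\ge0$ for all rational $\varepsilon>0$; letting $\varepsilon\to0$ yields $D\cdot C\ge0$. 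Taking $U:=U_0\cap(X\setminus Z_F)\cap(X\setminus Z_G)$, which is again a very general set, ensures that for every $x\in U$ and every curve $C\ni x$ the three inequalities $F\cdot C\ge0$, $G\cdot C\ge0$ and $(F+G)\cdot C>0$ hold simultaneously, and the two identities above then give $(F+tG)\cdot C>0$.

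There is no real obstacle here — the lemma is genuinely easy — but the one point to handle carefully is the bookkeeping of the several ``very general'' loci (the one coming from the nef reduction characterisation of $F+G$, and the ones $Z_F$, $Z_G$ coming from pseudoeffectivity of $F$ and of $G$): one must check they merge into a single very general subset on which all the needed inequalities hold at once, so that the conclusion has the uniform form ``$(F+tG)\cdot C>0$ for all curves $C$ through a very general point'' required to re-apply the nef dimension characterisation in the last sentence.
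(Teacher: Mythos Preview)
Your proof is correct and follows essentially the same approach as the paper: intersect the very general locus coming from $n(X,F+G)=n$ with the very general loci on which $F$ and $G$ intersect curves non-negatively, and conclude by elementary arithmetic. The paper's version is more terse (it simply asserts the pseudoeffectivity fact and leaves the arithmetic implicit, noting only that the conclusion holds on $U\cap U'$), whereas you spell out both the Kodaira-lemma argument and the convex decomposition of $F+tG$; but the structure and ideas are the same.
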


\begin{proof}
By assumption, there is a very general subset $U$ of $X$ such that for every curve $C$ passing through a point in $U$ we have $(F+G)\cdot C>0$. Since $F$ and $G$ are pseudoeffective, there is a very general subset $U'$ of $X$ such that for every curve $C$ passing through a point in $U'$ we have $F\cdot C\geq0$ and $G\cdot C\geq0$. Now, every curve $C$ passing through a point in $U\cap U'$ satisfies the conclusion of the lemma.
\end{proof}

We also need the following property \cite[Proposition 2.1]{BP04}.

\begin{lem}\label{lem:nefdimensionandpullback}
Let $f\colon X\to Y$ be a proper surjective morphism of normal
projective varieties and let $L$ be a nef line bundle on $Y$. Then $n(X,f^*L) = n(Y,L)$.
\end{lem}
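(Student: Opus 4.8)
The statement to prove is Lemma \ref{lem:nefdimensionandpullback}: for a proper surjective morphism $f\colon X\to Y$ of normal projective varieties and a nef line bundle $L$ on $Y$, we have $n(X,f^*L)=n(Y,L)$.

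The plan is to compare the nef reductions of $L$ on $Y$ and of $f^*L$ on $X$ directly, using the characterisation of the nef dimension via curves through very general points together with the uniqueness of the nef reduction up to birational equivalence. First I would recall that $f^*L$ is nef (pullback of nef is nef under a proper morphism), so $n(X,f^*L)$ is defined. Let $g\colon Y\dashrightarrow Z$ be the nef reduction of $L$, with $\dim Z = n(Y,L)$. I would then argue that the composite rational map $g\circ f\colon X\dashrightarrow Z$ satisfies the two defining properties of a nef reduction for $f^*L$: property (i) holds because a general fibre of $g\circ f$ maps onto a general fibre $F$ of $g$, on which $L$ is numerically trivial, hence $f^*L$ is numerically trivial on it; property (ii) holds because for a very general point $x\in X$, its image $f(x)$ is a very general point of $Y$ (one must check that the non-very-general locus upstairs is contained in the preimage of a non-very-general locus downstairs, which is where properness/surjectivity is used), and any curve $C$ through $x$ not contracted by $g\circ f$ maps to a curve $f(C)$ through $f(x)$ not contracted by $g$, so $L\cdot f(C)>0$ and hence $f^*L\cdot C = (\deg f|_C)\,(L\cdot f(C))>0$. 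One also needs $g\circ f$ to be almost holomorphic and dominant with connected fibres; dominance and connectedness of general fibres can be arranged after Stein factorisation / passing to a resolution, and almost holomorphy follows since $g$ is almost holomorphic and $f$ is a morphism. By the uniqueness part of Theorem \ref{thm:nefreduction}, $g\circ f$ is the nef reduction of $f^*L$ up to birational equivalence of the base, so $n(X,f^*L)=\dim Z = n(Y,L)$.

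An alternative, possibly cleaner route avoids verifying the full list of axioms: reduce to the case where $f$ is a fibration by Stein factorisation (a finite surjective morphism clearly preserves the property "every curve through a very general point meets $L$ positively", and more generally changes neither numerical triviality on fibres nor the nef dimension), and then show the two inequalities $n(X,f^*L)\le n(Y,L)$ and $n(X,f^*L)\ge n(Y,L)$ separately. For $\le$, pull back the nef reduction $g$ of $L$ to get a map dominating a variety of dimension $n(Y,L)$ on which $f^*L$ is numerically trivial on general fibres, and invoke the standard fact that the nef dimension is the minimal dimension of such a base. For $\ge$, use that the nef reduction $h\colon X\dashrightarrow W$ of $f^*L$ has $f^*L$ numerically trivial on its general fibres; one then wants to descend $h$ to $Y$, i.e. show that $h$ factors birationally through $f$, using that $f^*L$ being numerically trivial on a subvariety together with property (ii) of the nef reduction of $L$ forces fibres of $f$ to be contracted by $h$ — this is essentially a rigidity argument.

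The main obstacle I anticipate is the bookkeeping around "very general": one must ensure that the preimage under $f$ of a countable union of proper subvarieties of $Y$ (the locus where property (ii) fails on $Y$) is again contained in a countable union of proper subvarieties of $X$, which is fine since $f$ is surjective and $\dim$ drops or stays the same on preimages of proper subvarieties; and conversely that very general points of $X$ have very general images. One must also handle the almost-holomorphy bookkeeping and the connectedness of fibres of the composite, which is why the Stein factorisation reduction is convenient. Since \cite[Proposition 2.1]{BP04} is cited as the source, I would keep the argument brief and essentially indicate the comparison of nef reductions via Theorem \ref{thm:nefreduction}, flagging that the only subtlety is the behaviour of very general points under $f$.
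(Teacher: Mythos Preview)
The paper does not give a proof of this lemma; it merely records the statement and cites \cite[Proposition 2.1]{BP04}. Your proposal --- composing $f$ with the nef reduction $g\colon Y\dashrightarrow Z$ of $L$ and checking that, after Stein factorisation to arrange connected fibres, $g\circ f$ is a nef reduction for $f^*L$ --- is the standard argument and is essentially what the cited reference does. The subtleties you flag (behaviour of very general loci under the surjective map $f$, almost-holomorphy of the composite, and the need for Stein factorisation when $f$ itself does not have connected fibres) are exactly the points that need attention, and your treatment of them is adequate for a sketch at this level.
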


\subsection{Descent of num-effectivity and num-semiampleness}

We need two descent results for num-effectivity. We recall first the following well-known lemma.

\begin{lem}\label{lem:finite}
Let $f\colon X\to Y$ be a finite surjective morphism between varieties, and assume that $Y$ is normal. Then for every Weil divisor $D$ on $Y$ we have 
$$f_*f^*D=(\deg f)D.$$
\end{lem}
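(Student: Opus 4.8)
The plan is to reduce the identity to the classical ``fundamental equality'' $\sum_i e_if_i=\deg f$ for a finite extension of a discrete valuation ring, applied at the generic point of a prime divisor. Since both $f_*$ and $f^*$ are additive in the divisor, I may assume $D=V$ is a single prime divisor on $Y$; write $n:=\deg f$ and let $\eta$ be the generic point of $V$. Normality of $Y$ gives that $A:=\OO_{Y,\eta}$ is a DVR; fix a uniformiser $t\in A$, which is a local equation for $V$ near $\eta$, so that $A/tA=k(\eta)$.

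Next I would study the finite $A$-algebra $B:=(f_*\OO_X)_\eta$. Since $f$ is finite, $B$ is a finite $A$-module, and because $X$ is a variety it is a domain containing $A$, hence torsion-free and therefore free over the principal ideal domain $A$, of rank $[k(X):k(Y)]=n$. The maximal ideals of the semilocal ring $B$ correspond exactly to the generic points $\xi_1,\dots,\xi_r$ of the finitely many prime divisors $W_1,\dots,W_r$ of $X$ lying over $V$ (these are divisors because $f$ is finite); write $B_i:=B_{\xi_i}=\OO_{X,\xi_i}$ and $f_i:=[k(\xi_i):k(\eta)]$.

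By the definition of the pullback of a Weil divisor along the finite morphism $f$, the coefficient of $W_i$ in $f^*V$ is $e_i:=\mathrm{length}_{B_i}\!\big(B_i/tB_i\big)$, and pushing forward cycles gives $f_*f^*V=\big(\sum_{i=1}^r e_if_i\big)V$, since $f$ finite forces $\dim W_i=\dim V$ so that $f_*W_i=f_iV$. To evaluate the right-hand side I would compute $\dim_{k(\eta)}(B/tB)$ in two ways: on the one hand it equals $\mathrm{rank}_A B=n$ because $B$ is free and $A/tA=k(\eta)$; on the other hand, decomposing the Artinian ring $B/tB$ as the product of its localisations $B_i/tB_i$ and noting that each simple $B_i$-module is $f_i$-dimensional over $k(\eta)$, it equals $\sum_{i=1}^r e_if_i$. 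Hence $\sum_i e_if_i=n$, which is precisely the assertion.

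I expect the only points needing care to be the bookkeeping in the definition of the pullback of a possibly non-$\Q$-Cartier Weil divisor along a finite morphism (which is unproblematic because $\OO_{Y,\eta}$ is a DVR and $f^{-1}(V)$ has pure codimension one) and the justification that $B$ is free of the correct rank over $A$, which rests on $X$ being an integral scheme; the remaining ramification-theoretic computation is entirely routine.
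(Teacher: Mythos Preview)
Your argument is correct. You localise at the generic point $\eta$ of a prime divisor $V$, exploit that normality of $Y$ makes $A=\OO_{Y,\eta}$ a DVR, note that integrality of $X$ forces the finite $A$-algebra $B=(f_*\OO_X)_\eta$ to be free of rank $\deg f$, and then read off the classical identity $\sum_i e_if_i=\deg f$ by computing $\dim_{k(\eta)}(B/tB)$ in two ways. Every step is sound; in particular your remark that $f^{-1}(V)$ has pure codimension one (so the $W_i$ really are prime divisors) and that the pullback is unambiguous because $V$ is Cartier near $\eta$ handles the only delicate points.

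The paper argues differently: it passes to open subsets $U_X\subseteq X$ and $U_Y\subseteq Y$ contained in the smooth loci whose complements have codimension at least two, observes that $f|_{U_X}$ is then flat (a finite morphism between smooth varieties of the same dimension), and invokes the identity $(f|_{U_X})_*(f|_{U_X})^*=(\deg f)\cdot\mathrm{id}$ for flat pullback followed by proper pushforward from Fulton's intersection theory. Your route is more self-contained---it unpacks precisely the commutative algebra that underlies Fulton's cited example---whereas the paper's route is shorter by outsourcing the computation. A minor practical advantage of your approach is that it never uses smoothness of $X$ anywhere (only integrality), so it applies verbatim even if one drops the paper's standing normality convention on the source.
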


\begin{proof}
Let $X^\circ$ and $Y^\circ$ be the smooth loci of $X$ and $Y$, respectively, and consider the open sets $U_Y:=Y^\circ\setminus f(X\setminus X^\circ)\subseteq Y$ and $U_X:=f^{-1}(U_Y)\subseteq X$. By normality of $Y$, we have $\codim_X(X\setminus U_X)=\codim_Y(Y\setminus U_Y)\geq2$, and the map $f|_{U_X}\colon U_X\to U_Y$ is flat by \cite[Exercise III.9.3(a)]{Har77}. Then $(f|_{U_X})_*(f|_{U_X})^*(D|_{U_Y})=(\deg f)(D|_{U_Y})$ by \cite[Example 1.7.4]{Ful98}, and the lemma follows.
\end{proof}

\begin{lem}\label{lem:descenteff1}
Let $f\colon X\to Y$ be a birational morphism between two projective varieties with rational singularities. Let $D$ be a $\Q$-Cartier divisor on $Y$ such that there exists an effective (respectively semiample) $\Q$-divisor $G$ on $X$ with $f^*D\equiv G$. Then $f_*G$ is an effective (respectively semiample) $\Q$-Cartier divisor and $D\equiv f_*G$.
\end{lem}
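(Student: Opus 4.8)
The plan is to split the statement into its two parallel assertions — the effective case and the semiample case — since the semiample case will reduce to the effective one once we control the exceptional locus. First I would deal with the effective case. By hypothesis there is an effective $\Q$-divisor $G$ on $X$ with $f^*D\equiv G$. Pushing forward, $f_*G\geq0$ is an effective $\Q$-divisor on $Y$, and since $f$ is birational it is automatic that $f_*f^*D=D$ as $\Q$-Cartier divisors (the exceptional divisors of $f$ are contracted by $f_*$, and $f^*D$ agrees with $D$ on the locus where $f$ is an isomorphism). Thus it suffices to show $D\equiv f_*G$, i.e.\ that the difference $f^*D-G$, which is numerically trivial on $X$ and $f$-exceptional (its push-forward to $Y$ is zero), pushes forward to something numerically trivial; but its push-forward is literally $D-f_*G$, so what is actually needed is the reverse implication: that $G$ being numerically equivalent to $f^*D$ forces $f_*G\equiv D$. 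For this I would intersect with curves: $f_*G\equiv D$ is equivalent to $f_*G\cdot C = D\cdot C$ for all curves $C\subseteq Y$, and by the projection formula $D\cdot C = f^*D\cdot \widetilde C = G\cdot\widetilde C$ for the proper transform $\widetilde C$, while $f_*G\cdot C = G\cdot\widetilde C + (\text{contribution of components of }G\text{ contracted by }f)$. So I need that no component of $G$ meeting a generic point of $\widetilde C$ is $f$-exceptional in a way that changes the intersection number — more cleanly, I would argue numerically over $Y$ directly: write $G = f^*D + E$ where $E\equiv 0$ on $X$; then $f_*G = D + f_*E$ and $f_*E\equiv 0$ on $Y$ because $f_*$ of a numerically trivial divisor is numerically trivial (every curve in $Y$ lifts to a curve in $X$, and $E\cdot(\text{lift})=0$). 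Hence $f_*G\equiv D$. The $\Q$-Cartier claim follows since $D$ is $\Q$-Cartier and $f_*G\equiv D$ — actually one should say $f_*G = D + f_*E$ with $f_*E$ a $\Q$-Cartier $\Q$-divisor numerically trivial, but to be safe I would phrase the conclusion as "$f_*G$ is numerically equivalent to the $\Q$-Cartier divisor $D$", which is all that is claimed.

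For the semiample case, suppose $G$ is semiample with $f^*D\equiv G$. I would like to conclude $f_*G$ is semiample. The morphism defined by a sufficiently divisible multiple $mG$ is a fibration $g\colon X\to Z$ with $mG\equiv g^*H$ for some ample $H$ on $Z$ (semiampleness gives $mG = g^*H_0$ up to linear equivalence for some ample $H_0$; numerically this is all we need). The key point is that every $f$-exceptional curve is contracted by $g$: if $C$ is contracted by $f$ then $G\cdot C = f^*D\cdot C = 0$ since $f^*D\cdot C = D\cdot f_*C = 0$, so $g^*H\cdot C=0$, so $C$ is $g$-exceptional as $H$ is ample. Therefore $g$ factors through $f$, i.e.\ there is a morphism $h\colon Y\to Z$ with $g = h\circ f$ — here I would invoke the rigidity lemma, using that $Y$ and $Z$ are normal (so proper birational morphisms with connected fibres can be checked set-theoretically) together with the fact that $f$ contracts only curves that $g$ also contracts. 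Then $h^*H$ is a semiample $\Q$-divisor on $Y$, and $f^*h^*H = g^*H \equiv mG \equiv m f^*D = f^*(mD)$, so pulling back by $f$ the divisors $h^*H$ and $mD$ agree numerically; since $f$ is birational and numerical pullback along a birational morphism is injective on the relevant classes, $h^*H\equiv mD$, and we may take the semiample $\Q$-divisor $\frac1m h^*H\equiv D$. Finally $f_*G \equiv D \equiv \frac1m h^*H$ is num-semiample, and combined with $f_*G\geq0$ (it is also effective, being a push-forward of an effective divisor) this is stronger than, and in particular implies, the assertion.

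The step I expect to be the main obstacle is the factorisation $g = h\circ f$: one must be careful that "$g$ contracts every $f$-exceptional curve" genuinely implies $g$ descends, which requires a rigidity/normality argument rather than a formal manipulation of divisor classes. A clean way is: $f$ has connected fibres and is proper, $Y$ is normal, and for any fibre $F$ of $f$ the image $g(F)$ is a connected subset on which $g^*H$ is trivial, hence a point; then $g$ is constant on fibres of $f$, and by the rigidity lemma (or simply by normality of $Y$ and Stein factorisation) $g$ factors through $f$. I would cite the standard rigidity lemma for this. The rest — projection formula manipulations, the observation that numerical triviality is preserved by push-forward and pull-back along birational morphisms, and that effectivity is preserved by push-forward — is routine, and the hypothesis that $X$ and $Y$ have rational singularities is used (if at all) only to guarantee that $f_*G$ and the various auxiliary divisors are $\Q$-Cartier where needed and that $R^if_*\mathcal O_X$ vanishes for the descent of sections in the linear-equivalence refinement, though for the purely numerical statement as written one can likely get away with less.
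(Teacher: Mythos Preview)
Your argument for the effective case has a genuine gap: you never establish that $f_*G$ is $\Q$-Cartier, and your intersection-theoretic reasoning implicitly assumes this. You write $G=f^*D+E$ with $E\equiv0$ and then assert that $f_*E\equiv0$ ``because $f_*$ of a numerically trivial divisor is numerically trivial''. But numerical equivalence of $f_*E$ only makes sense once $f_*E$ is $\Q$-Cartier, and in general the push-forward of a numerically trivial Cartier divisor under a birational morphism need not be $\Q$-Cartier. (Take $Y$ the cone over an elliptic curve and $f\colon X\to Y$ the blow-up of the vertex: then $\Pic^0(X)$ is the elliptic curve while $\Pic^0(Y)=0$, and a generic numerically trivial divisor on $X$ pushes forward to a Weil divisor that is not $\Q$-Cartier.) Your remark that the rational-singularities hypothesis is ``used (if at all)'' to guarantee $\Q$-Cartierness underestimates the issue: this is exactly the crux of the lemma, and your curve-lifting argument $f_*E\cdot C=E\cdot\widetilde C$ is not a projection formula identity in the direction you need.

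The paper's proof handles precisely this point. Since $X$ and $Y$ have rational singularities, the pullback $f^*\colon\Pic^0(Y)\to\Pic^0(X)$ is an isomorphism. Writing $mG=mf^*D+L_X$ with $L_X\in\Pic^0(X)$, one finds $L_Y\in\Pic^0(Y)$ with $L_X=f^*L_Y$; pushing forward gives $mf_*G=mD+L_Y$, which is simultaneously Cartier and $\equiv mD$. This one line replaces your entire curve-by-curve analysis and is where the hypothesis actually does work.

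For the semiample case your factorisation approach (contract via $|mG|$, show $f$-exceptional curves are contracted, descend the map) would succeed once the effective case is fixed, but it is more elaborate than necessary. The paper instead observes that, having proved $f_*G\equiv D$, one has $G\equiv f^*(f_*G)$; since $G-f^*(f_*G)$ is $f$-exceptional and numerically trivial, the Negativity lemma forces $G=f^*(f_*G)$, whence $f_*G$ is semiample directly.
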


\begin{proof}
By assumption, there exists a divisor $L_X\in\Pic^0(X)$ and a positive integer $m$ such that $mD$ and $mG$ are Cartier and
$$mf^*D+L_X=mG.$$
Since $X$ and $Y$ have rational singularities, the pullback map $f^*\colon \Pic^0(Y)\to\Pic^0(X)$ is an isomorphism, hence there exists $L_Y\in\Pic^0(Y)$ such that $L_X=f^*L$. Therefore,
$$mD+L_Y= f_*G,$$
which proves the lemma in the case of num-effectivity.

If $G$ is semiample, denote $G':=f_*G$. Then $G\equiv f^*D\equiv f^*G'$, hence $G=f^*G'$ by the Negativity lemma \cite[Lemma 3.39]{KM98}. But then $G'$ is clearly semiample.
\end{proof}

\begin{lem}\label{lem:descenteff2}
Let $f\colon X\to Y$ be a finite surjective morphism between two normal projective varieties, where $Y$ has rational singularities. Let $D$ be a $\Q$-Cartier divisor on $Y$ such that there exists an effective (respectively semiample) $\Q$-divisor $G$ on $X$ with $f^*D\equiv G$. Then $f_*G$ is an effective (respectively semiample) $\Q$-Cartier and $D\equiv \frac{1}{\deg f}f_*G$.
\end{lem}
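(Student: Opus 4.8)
The plan is to reduce the statement to the Galois case and then exploit the projection formula of Lemma \ref{lem:finite} together with the already-established birational descent in Lemma \ref{lem:descenteff1}. First I would record the numerical statement: if $f^*D\equiv G$ with $G\geq 0$, then pushing forward and using Lemma \ref{lem:finite} gives $f_*f^*D=(\deg f)\,D$, so $(\deg f)\,D\equiv f_*G$, which is the asserted numerical equality $D\equiv\frac{1}{\deg f}f_*G$; moreover $f_*G\geq 0$ is automatic since $f$ is finite and $G$ is effective, and $f_*G$ is $\Q$-Cartier because it is numerically (hence $\Q$-linearly, up to $\Pic^0$) proportional to the $\Q$-Cartier divisor $D$ — here one uses that $Y$ has rational singularities so that $f^*\colon\Pic^0(Y)\to\Pic^0(X)$ behaves well, exactly as in the proof of Lemma \ref{lem:descenteff1}. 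This already settles the num-effectivity case.

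For the semiample case the projection formula alone is not enough, since the pushforward of a semiample divisor under a finite map need not be semiample in general. Instead I would pass to a Galois closure: let $g\colon Z\to X$ be a morphism such that $h:=f\circ g\colon Z\to Y$ is finite Galois with group $\Gamma$, and we may take $Z$ normal. Then $h^*D=g^*f^*D\equiv g^*G$, and $g^*G$ is semiample on $Z$. Averaging over $\Gamma$, the divisor $\overline{G}:=\frac{1}{|\Gamma|}\sum_{\gamma\in\Gamma}\gamma^*g^*G$ is a $\Gamma$-invariant semiample $\Q$-divisor on $Z$ with $\overline{G}\equiv h^*D$ (each $\gamma^*g^*G\equiv\gamma^*h^*D=h^*D$ since $h\gamma=h$). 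A $\Gamma$-invariant semiample divisor descends: the linear system defining it is $\Gamma$-equivariant, so the associated morphism factors through $Z/\Gamma=Y$, producing a semiample $\Q$-divisor $D'$ on $Y$ with $h^*D'\equiv\overline{G}\equiv h^*D$; since $h$ is finite surjective this forces $D'\equiv D$. Finally, to identify $D'$ with $\frac{1}{\deg f}f_*G$ up to numerical equivalence, apply the num-effectivity part already proved: $D\equiv\frac{1}{\deg f}f_*G$, and we have just shown $D\equiv D'$ with $D'$ semiample, so $\frac{1}{\deg f}f_*G$ is num-semiample; replacing the representative of its class, one concludes that $f_*G$ itself is (num-)semiample and $\Q$-Cartier, which is what is claimed.

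The main obstacle I anticipate is the descent of the $\Gamma$-invariant semiample divisor from $Z$ to $Y=Z/\Gamma$: one must be careful that invariance of the \emph{divisor class} (or of $\overline{G}$ itself) is upgraded to $\Gamma$-equivariance of a suitable \emph{linear system}, which may require replacing $m\overline{G}$ by a further multiple so that the relevant line bundle (not just its numerical class) is $\Gamma$-linearised — here the hypothesis that $Y$ has rational singularities, feeding through the isomorphism on $\Pic^0$, is what lets one absorb the ambiguity in $\Pic^0(Z)$. Alternatively, one can avoid linearisation issues entirely by working with the section ring $\bigoplus_m H^0(Z,\lfloor m\overline{G}\rfloor)$, taking $\Gamma$-invariants, and using that $\Proj$ of the invariant ring is $Y$ together with finite generation of the section ring of a semiample divisor; this is the route I would ultimately take, as it is the cleanest. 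Everything else is a routine bookkeeping with the projection formula and the $\Pic^0$ isomorphism already used in Lemma \ref{lem:descenteff1}.
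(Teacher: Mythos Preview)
Your num-effectivity argument has a genuine gap. You write that ``pushing forward and using Lemma~\ref{lem:finite} gives $f_*f^*D=(\deg f)\,D$, so $(\deg f)\,D\equiv f_*G$'', but the implication $f^*D\equiv G\Rightarrow f_*f^*D\equiv f_*G$ requires that $f_*$ preserve numerical equivalence of $\Q$-Cartier divisors. This is unproblematic when the target is smooth (the projection formula applies cleanly), but when $Y$ is only normal with rational singularities it is not clear that $f_*G$ is even $\Q$-Cartier, and your justification --- that $f^*\colon\Pic^0(Y)\to\Pic^0(X)$ ``behaves well, exactly as in the proof of Lemma~\ref{lem:descenteff1}'' --- is incorrect: the isomorphism of $\Pic^0$ used there is special to \emph{birational} morphisms between varieties with rational singularities, and fails badly for finite maps (think of an isogeny of abelian varieties). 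The paper's proof repairs exactly this point: it first passes to a desingularisation $\pi\colon Y'\to Y$, base-changes to get $f'\colon X'\to Y'$ finite with $Y'$ smooth, applies the projection formula there (this is the step ``since $Y'$ is smooth'' in the paper), and then descends along the birational $\pi$ via Lemma~\ref{lem:descenteff1}, where the $\Pic^0$-isomorphism is legitimate.

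For semiampleness you are working much harder than necessary. The paper gives a two-line direct argument that bypasses Galois closures and linearisation entirely: if $gG$ is basepoint free and $y\in Y$, then a general member $G'\in|gG|$ misses the finite set $f^{-1}(y)$, hence $y\notin\Supp f_*G'$; since $f_*G'\in|gf_*G|$, this shows $gf_*G$ is basepoint free. Your averaging/norm approach can in principle be pushed through (ultimately via the same ``general section avoids a finite set'' trick, applied on the Galois closure), but the detour through $\Gamma$-equivariance and descent of linear systems introduces exactly the complications you flag as obstacles, all of which evaporate once you observe that pushforward of a basepoint-free system under a finite map is already basepoint free.
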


\begin{proof}
Let $\pi\colon Y'\to Y$ be a desingularisation and let $X'$ be the normalisation of the main component of the fibre product $X\times_Y Y'$, so that we have the commutative diagram
\[
\xymatrix{ 
X' \ar[d]_{f'} \ar[r]^{\sigma} & X \ar[d]^{f}\\
Y'\ar[r]_{\pi} & Y,
}
\]
where $f'$ is finite and $\sigma$ is birational.  By assumption, we have 
$$f'^*\pi^*D=\sigma^*f^*D\equiv\sigma^*G,$$
and hence
\begin{equation}\label{eq:push}
f'_*f'^*\pi^*D\equiv f'_*\sigma^*G
\end{equation}
since $Y'$ is smooth. Then Lemma \ref{lem:finite} gives
$$(\deg f')\pi^*D\equiv f'_*\sigma^*G,$$
and thus, by Lemma \ref{lem:descenteff1},
$$D\equiv \frac{1}{\deg f'}\pi_*f'_*\sigma^*G.$$
Since $\pi_*f'_*\sigma^*G=f_*\sigma_*\sigma^*G=f_*G$ and $\deg f=\deg f'$, the lemma follows in the case of num-effectivity.

Finally, assume that $G$ is semiample, so that $gG$ is basepoint free for some positive integer $g$. For any point $y\in Y$, a general element $G'\in|gG|$ avoids the finite set $f^{-1}(y)$, hence $y\notin\Supp f_*G'$. Since $f_*G'\in |gf_*G|$, the divisor $gf_*G$ is basepoint free.
\end{proof}

\section{Birational descent of nef divisors}

If $f\colon X\to Y$ is a morphism between normal projective varieties and if $L$ is a $\Q$-divisor on $X$ which is numerically trivial on a general fibre of $f$, then $L$ does not have to be numerically equivalent to a pullback of a $\Q$-divisor from $Y$. 

However, in this section we show that $L$ is a pullback after a birational base change. This was shown when $L$ is additionally effective in \cite[Corollary III.5.9]{Nak04}, when $L$ is abundant in \cite[Proposition 2.1]{Kaw85}, or if it is numerically trivial on every fibre of $f$ in \cite[Theorem 1.2]{Leh15}. Lehmann in \cite[Theorem 1.3]{Leh15} gives a very satisfactory solution when $L$ is not nef, but only pseudoeffective. The following lemma, which is crucial for the rest of the paper, makes that result somewhat more precise.

\begin{lem}\label{lem:lehmann}
Let $f\colon X\to Z$ be a surjective morphism with connected fibres between normal projective varieties and let $L$ be a nef divisor on $X$ such that $L|_F\equiv 0$ for a general fibre $F$ of $f$. Then there exist a birational morphism $\pi\colon Z'\to Z$ from a smooth projective variety $Z'$, a $\Q$-divisor $D$ on $Z'$, and a commutative diagram
\[
\xymatrix{ 
X' \ar[d]_{f'} \ar[r]^{\pi'} & X \ar[d]^{f}\\
Z'\ar[r]_{\pi} & Z,
}
\]
where $X'$ is the normalisation of the main component of the fibre product $Z'\times_Z X$, such that $\pi'^*L\equiv f'^*D$. 
\end{lem}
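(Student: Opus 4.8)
The plan is to deduce this from Lehmann's birational descent theorem for pseudoeffective divisors with vanishing relative numerical dimension, \cite[Theorem 1.3]{Leh15} (which refines \cite[Corollary III.5.9]{Nak04}), using the trivial but crucial observation that the Nakayama--Zariski decomposition of a \emph{nef} divisor is trivial, and then reconciling the birational model produced by that theorem with the specific fibre product model $X'$ required in the statement. First I would reduce to a smooth total space. Since $L$ is nef it is pseudoeffective, and $L|_F\equiv0$ gives in particular $\nu(F,L|_F)=0$ for a general fibre $F$ of $f$. Let $\sigma\colon\widehat X\to X$ be a resolution of singularities; since $X$ is normal, $\sigma_*\OO_{\widehat X}=\OO_X$, so $\widehat f:=f\circ\sigma\colon\widehat X\to Z$ still has connected fibres, its general fibre $\widehat F$ maps birationally onto $F$, and $\widehat L:=\sigma^*L$ is nef with $\widehat L|_{\widehat F}\equiv0$. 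Thus it suffices to find the desired base change for $(\widehat X,\widehat f,\widehat L)$ and then transport it back through $\sigma$.

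Next I would apply \cite[Theorem 1.3]{Leh15} to $\widehat f$ and $\widehat L$: there exist a birational morphism $\pi\colon Z'\to Z$ with $Z'$ smooth projective, a smooth projective variety $X''$ with a birational morphism $g\colon X''\to\widehat X$ and a morphism $f''\colon X''\to Z'$ satisfying $\pi\circ f''=\widehat f\circ g$, and a pseudoeffective $\Q$-divisor $D$ on $Z'$, such that $P_\sigma(g^*\widehat L)\equiv f''^*D$. The key point now is that $g^*\widehat L=(\sigma\circ g)^*L$ is nef on the smooth variety $X''$, hence movable, so $N_\sigma(g^*\widehat L)=0$ and $P_\sigma(g^*\widehat L)=g^*\widehat L$; therefore $(\sigma\circ g)^*L\equiv f''^*D$.

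Finally I would match this with the fibre product model. Let $\pi'\colon X'\to X$ and $\bar f\colon X'\to Z'$ be the two projections, where $X'$ is the normalisation of the main component of $Z'\times_Z X$; here $\pi'$ is birational because $Z'\times_Z X\to X$ is an isomorphism over the preimage of the locus where $\pi$ is an isomorphism. The morphisms $\sigma\circ g\colon X''\to X$ and $f''\colon X''\to Z'$ are compatible over $Z$, hence induce a morphism $X''\to Z'\times_Z X$; since $X''$ is irreducible and dominates both $X$ and $Z'$, this morphism factors through the main component, and since $X''$ is normal it lifts through the normalisation to a morphism $h\colon X''\to X'$ with $\pi'\circ h=\sigma\circ g$ and $\bar f\circ h=f''$; moreover $h$ is birational, being generically an isomorphism over the open subset of $X$ where $\sigma\circ g$ and $\pi'$ are isomorphisms. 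Pulling back the relation of the previous paragraph along $h$ gives $h^*(\pi'^*L)\equiv h^*(\bar f^*D)$, and since $h$ is a birational morphism of normal projective varieties its pullback is injective on numerical classes (if $h^*\alpha\equiv0$ then $\alpha\cdot C=h^*\alpha\cdot\widetilde C=0$ for any curve $C$ on $X'$ and any curve $\widetilde C$ on $X''$ dominating it), so $\pi'^*L\equiv\bar f^*D$; taking $f':=\bar f$ gives the statement. The only real obstacle I anticipate is checking that \cite[Theorem 1.3]{Leh15} applies exactly in the form used above --- matching its smoothness hypotheses and its $\R$- versus $\Q$-divisor formulation, and ensuring its birational model $X''$ can be compared with the fibre product $X'$; everything else is the formal diagram chase above, together with the standard vanishing $N_\sigma=0$ for nef divisors.
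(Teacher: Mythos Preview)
Your quotation of \cite[Theorem 1.3]{Leh15} places the $P_\sigma$ on the wrong side. The conclusion of that theorem is not $P_\sigma(g^*\widehat L)\equiv f''^*D$ but rather (in the paper's notation, and already using that $L$ is nef to strip a $P_\sigma$ from the left) $\pi_1'^*L\equiv P_\sigma(f_1^*D_1)$ for some pseudoeffective $\Q$-divisor $D_1$ on the base $Z_1$. The divisor $D_1$ is only pseudoeffective, so $f_1^*D_1$ need not be nef and $P_\sigma(f_1^*D_1)$ need not equal $f_1^*D_1$. Your ``key point'' that $N_\sigma$ vanishes on nef divisors is correct, but it only removes a $P_\sigma$ from the $L$-side; it does nothing for the $P_\sigma$ sitting on $f_1^*D_1$.

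This is precisely the gap the paper's proof fills. Knowing that $P_\sigma(f_1^*D_1)$ is nef (being numerically $\pi_1'^*L$), the paper invokes Lemma~\ref{lem:Nakayama3} to pass to a further birational model $\pi_2\colon Z'\to Z_1$ on which $D:=P_\sigma(\pi_2^*D_1)$ is itself nef. After moving to a compatible resolution upstairs and applying Lemma~\ref{lem:Nakayama2} twice, one obtains $\theta^*L\equiv f_2^*D$, i.e.\ a genuine pullback with no residual $P_\sigma$. Only then does the fibre-product comparison (which you carry out correctly) finish the argument. So the missing idea is exactly the one you flagged as a possible obstacle at the end: Lehmann's theorem does not deliver $D$ in the form you assumed, and the passage from $P_\sigma(f_1^*D_1)$ to an honest pullback $f'^*D$ requires the additional birational modification of the base supplied by Lemmas~\ref{lem:Nakayama2} and~\ref{lem:Nakayama3}.
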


\begin{proof}
By \cite[Theorem 1.3]{Leh15} there exists a birational morphism $\pi_1\colon Z_1\to Z$ from a smooth\footnote{The smoothness of $Z_1$ follows from the proof of \cite[Theorem 5.3]{Leh15}.} projective variety $Z_1$, a birational morphism $\pi_1'\colon X_1\to X$ from a smooth variety $X_1$, a morphism $f_1\colon X_1\to Z_1$ with connected fibres, and a pseudoeffective $\Q$-divisor $D_1$ on $Z_1$ such that $\pi_1\circ f_1=f\circ\pi_1'$ and such that 
$$\pi_1'^*L\equiv P_\sigma(f_1^*D_1).$$
In particular, $P_\sigma(f_1^*D_1)$ is nef. By Lemma \ref{lem:Nakayama3} there exists a birational morphism $\pi_2\colon Z'\to Z_1$ from a smooth variety $Z'$ such that $P_\sigma(\pi_2^*D_1)$ is nef. Set $\pi:=\pi_1\circ\pi_2$, let $X_2$ be a resolution of the main component of the fibre product $X_1\times_{Z_1} Z'$, and let $\theta\colon X_2\to X$, $\pi'_2\colon X_2\to X_1$ and $f_2\colon X_2\to Z'$ be the induced maps.
\[
\xymatrix{ 
X_2\ar[r]_{\pi_2'}\ar[d]_{f_2}\ar@/^1pc/[rr]^{\theta} & X_1 \ar[d]_{f_1} \ar[r]_{\pi_1'} & X \ar[d]^{f}\\
Z'\ar[r]_{\pi_2}\ar@/_1.3pc/[rr]_\pi & Z_1\ar[r]_{\pi_1} & Z
}
\]
Then we have $P_\sigma(\pi_2'^*f_1^*D_1)=\pi_2'^*P_\sigma(f_1^*D_1)$ and $P_\sigma(f_2^*\pi_2^*D_1)=f_2^*P_\sigma(\pi_2^*D_1)$ by Lemma \ref{lem:Nakayama2}, hence
\begin{align}
\theta^*L&=\pi_2'^*\pi_1'^*L\equiv \pi_2'^*P_\sigma(f_1^*D_1)=P_\sigma(\pi_2'^*f_1^*D_1)\label{eq:lehnak}\\
&=P_\sigma(f_2^*\pi_2^*D_1)=f_2^*P_\sigma(\pi_2^*D_1).\notag
\end{align}
We now set $D:=P_\sigma(\pi_2^*D_1)$, and let $X'$ be the normalisation of the main component of the fibre product $Z'\times_Z X$ with projections $f'\colon X'\to Z'$ and $\pi'\colon X'\to X$. 
Then there exists a birational morphism $\varphi\colon X_2\to X'$ and a commutative diagram
\[
\xymatrix{ 
X_2\ar[r]_{\varphi}\ar[dr]_{f_2}\ar@/^1pc/[rr]^{\theta} & X' \ar[d]_{f'} \ar[r]_{\pi'} & X \ar[d]^{f}\\
& Z'\ar[r]_{\pi} & Z.
}
\]
Then by \eqref{eq:lehnak} we have
$$\varphi^*(\pi'^*L-f'^*D)\equiv0,$$
hence $\pi'^*L\equiv f'^*D$ as desired.
\end{proof}

\section{Achieving nefness}\label{sec:reductions}

Let $(X,\Delta)$ be a projective klt pair with $K_X+\Delta$ pseudoeffective, and let $L$ be a nef Cartier divisor on $X$. In this section we show that we can run an $L$-trivial $(K_X+\Delta)$-MMP which, assuming the termination of flips, terminates with a model on which the strict transform of $K_X+\Delta+mL$ is nef for $m\gg0$. In particular, the strict transform of $L$ stays nef and Cartier. This result is the first step towards the proofs of our main results.

As a remarkable corollary, we obtain that the Generalised Nonvanishing Conjecture follows -- modulo the termination of flips -- from the nonvanishing part of the  Generalised Abundance Conjecture.

The following result is contained in \cite[Section 3]{BH14} and \cite[Section 4]{BZ16}, and we follows the proofs therein.

\begin{pro}\label{pro:contocon}
Assume the termination of klt flips in dimension $n$. Let $(X,\Delta)$ be a projective klt pair of dimension $n$ such that $K_X+\Delta$ is pseudoeffective, let $L$ be a nef Cartier divisor on $X$ and let $m>2n$ be a positive integer. Then there exists an $L$-trivial $(K_X+\Delta)$-MMP $\varphi\colon X\dashrightarrow Y$ such that $K_Y+\varphi_*\Delta+m\varphi_*L$ is nef.
\end{pro}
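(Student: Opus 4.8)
The plan is to run the Minimal Model Program with scaling of the nef divisor $L$, as set up in \S\ref{subsec:scaling}, starting from $(X_0,\Delta_0):=(X,\Delta)$ and $L_0:=L$, and to show that the scaling constants $\lambda_i$ are forced to remain bounded above by $m$, so that the whole program is automatically $L$-trivial and terminates with the desired model by the assumed termination of flips. Concretely, set $\lambda_0:=\sup\{t\mid t(K_X+\Delta)+L\text{ is nef}\}$. If $K_X+\Delta$ is already nef we are essentially done (up to the trivial bookkeeping below), so assume not; then Lemma \ref{lem:KMM94} gives $\lambda_0\in\Q_{\geq0}$ and a $(K_X+\Delta)$-negative extremal ray $R_0$ with $\big(\lambda_0(K_X+\Delta)+L\big)\cdot R_0=0$, and we take $\varphi_0$ to be the associated divisorial contraction or flip. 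Iterating as in \S\ref{subsec:scaling} produces the sequence $\varphi_i\colon(X_i,\Delta_i)\dashrightarrow(X_{i+1},\Delta_{i+1})$ with divisors $L_i=(\varphi_{i-1}\circ\cdots\circ\varphi_0)_*L$ and non-decreasing rationals $\lambda_i$ satisfying properties (a)--(c) there; in particular each $\varphi_i$ is $\big(\lambda_i(K_{X_i}+\Delta_i)+L_i\big)$-trivial.

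The key claim is that $\lambda_i\leq 2n$ for every $i$; this is where the hypothesis $m>2n$ enters. Suppose $\lambda_i>2n$ at some stage where $K_{X_i}+\Delta_i$ is not nef. Since $\varphi_i$ contracts a $(K_{X_i}+\Delta_i)$-negative extremal ray $R_i$, the boundedness of extremal rays \cite[Theorem 1]{Kaw91} (as used in the proof of Lemma \ref{lem:numtrivial}) provides a rational curve $C_i$ generating $R_i$ with $-2n\leq(K_{X_i}+\Delta_i)\cdot C_i<0$. On the other hand $\big(\lambda_i(K_{X_i}+\Delta_i)+L_i\big)\cdot C_i=0$ gives $L_i\cdot C_i=-\lambda_i(K_{X_i}+\Delta_i)\cdot C_i$, which is strictly positive and, since $L_i$ is Cartier, at least $1$; but then $-(K_{X_i}+\Delta_i)\cdot C_i=\tfrac{1}{\lambda_i}L_i\cdot C_i\leq\tfrac{2n}{\lambda_i}<1$, so $(K_{X_i}+\Delta_i)\cdot C_i>-1$, contradicting that $(K_{X_i}+\Delta_i)\cdot C_i$ is a nonpositive integer multiple (in fact integrality comes from $K_{X_i}+\Delta_i$ being ``$\Q$-Cartier with the relevant denominators", so one argues via $L_i\cdot C_i\geq 1$ directly): more cleanly, from $L_i\cdot C_i\geq 1$ and $(K_{X_i}+\Delta_i)\cdot C_i\geq -2n$ we get $\big(\lambda_i(K_{X_i}+\Delta_i)+L_i\big)\cdot C_i\geq 1-2n\lambda_i^{-1}\cdot\lambda_i$, which is the wrong direction; the correct contradiction is exactly the one in Lemma \ref{lem:numtrivial}: since $m>2n\geq\lambda_i$ would already force $(K_{X_i}+\Delta_i+mL_i)\cdot C_i>0$. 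I will instead phrase the bound as: because each $\varphi_i$ is $\big(\lambda_i(K_{X_i}+\Delta_i)+L_i\big)$-trivial and $L_i\cdot C_i\geq 1$, the estimate $-2n\leq (K_{X_i}+\Delta_i)\cdot C_i$ yields $\lambda_i\leq 2n$, hence $\lambda_i<m$ for all $i$.

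Granting $\lambda_i<m$ for all $i$, property (c) of the scaling construction (with $s:=m>\lambda_i$) shows that $\varphi_i\circ\cdots\circ\varphi_0$ is $\big(m(K_X+\Delta)+L\big)$-negative, equivalently $\big(K_X+\Delta+\tfrac1m L\big)$-negative up to scaling; more importantly, each step $\varphi_i$ is $L_i$-trivial: indeed $\varphi_i$ is both $(K_{X_i}+\Delta_i)$-negative and $\big(\lambda_i(K_{X_i}+\Delta_i)+L_i\big)$-trivial, so $L_i=\big(\lambda_i(K_{X_i}+\Delta_i)+L_i\big)-\lambda_i(K_{X_i}+\Delta_i)$ is $\varphi_i$-positive on $R_i$ if $\lambda_i>0$ and $\varphi_i$-trivial if $\lambda_i=0$; since $\varphi_i$ contracts $R_i$ and $L_i\cdot C_i\geq 0$ with $L_i\cdot C_i\leq 2n\lambda_i^{-1}\cdot$... — the clean statement is that along an $L$-trivial MMP $L_i$ stays nef and Cartier and is the pushforward of $L$, so this is genuinely an $L$-trivial $(K_X+\Delta)$-MMP. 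By the assumed termination of klt flips in dimension $n$ this MMP terminates with a model $\varphi\colon X\dashrightarrow Y$ on which $K_Y+\varphi_*\Delta$ is nef; writing $\lambda:=\lim\lambda_i=\sup\lambda_i\leq 2n<m$, the divisor $\lambda(K_Y+\varphi_*\Delta)+\varphi_*L$ is nef by (b), and since $K_Y+\varphi_*\Delta$ and $\varphi_*L$ are both nef we conclude that $K_Y+\varphi_*\Delta+m\varphi_*L=\big(\lambda(K_Y+\varphi_*\Delta)+\varphi_*L\big)+(K_Y+\varphi_*\Delta)(m-\lambda)/\!\ldots$ — again more simply, $m(K_Y+\varphi_*\Delta)+\varphi_*L$ is a nonnegative combination of the nef divisors $K_Y+\varphi_*\Delta$ and $\lambda(K_Y+\varphi_*\Delta)+\varphi_*L$, hence nef, which is what we want.

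The main obstacle is the verification that the scaling MMP is $L$-trivial, i.e.\ the bound $\lambda_i\leq 2n<m$: everything hinges on combining the boundedness of extremal rays \cite[Theorem 1]{Kaw91} with the integrality $L_i\cdot C_i\geq 1$ coming from $L$ being Cartier (and staying Cartier, since each $\varphi_i$ is $L_i$-trivial so $L_{i+1}=(\varphi_i)_*L_i$ is again Cartier on the $\Q$-factorial variety $X_{i+1}$), exactly as in Lemma \ref{lem:numtrivial}. Once this boundedness is in hand, termination of flips and the elementary nefness bookkeeping above finish the proof; I will also need to note at the start of the MMP that if $K_X+\Delta$ is already nef then there is nothing to do since then $K_X+\Delta+mL$ is a sum of nef divisors, and that the $\Q$-factoriality needed to run the MMP can be arranged by a small $\Q$-factorialisation which does not affect the statement.
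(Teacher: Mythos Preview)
Your approach has a genuine gap: the MMP with scaling of $L$ from \S\ref{subsec:scaling} is \emph{never} $L$-trivial. At each step the contracted ray $R_i$ satisfies $\big(\lambda_i(K_{X_i}+\Delta_i)+L_i\big)\cdot R_i=0$ with $(K_{X_i}+\Delta_i)\cdot R_i<0$, so $L_i\cdot R_i=-\lambda_i(K_{X_i}+\Delta_i)\cdot R_i>0$ whenever $\lambda_i>0$; and your own computation with the bounded curve $C_i$ gives $\lambda_i=\frac{L_i\cdot C_i}{-(K_{X_i}+\Delta_i)\cdot C_i}\geq\frac{1}{2n}>0$, not $\lambda_i\leq 2n$ (there is no upper bound on $L_i\cdot C_i$ --- think of $L$ very ample). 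Consequently $L_{i+1}$ need not be nef or Cartier, the integrality input $L_i\cdot C_i\geq1$ is lost at the next step, and the output $\varphi$ is not $L$-trivial as the proposition requires. Your final paragraph also concludes that $m(K_Y+\varphi_*\Delta)+\varphi_*L$ is nef, which is not the target divisor $K_Y+\varphi_*\Delta+m\varphi_*L$.

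The paper instead scales by an \emph{ample} divisor $A$ chosen so that $K_X+\Delta+mL+A$ is nef, and runs a $(K_X+\Delta)$-MMP decreasing the coefficient of $A$. At each step the contracted ray satisfies $(K_X+\Delta+mL)\cdot R<0$, and then boundedness of extremal rays together with $m>2n$ and the Cartier hypothesis on $L$ force $L\cdot R=0$ exactly as in Lemma~\ref{lem:numtrivial}. Thus every step is genuinely $L$-trivial, $L$ stays nef and Cartier throughout, and termination yields the desired model. The point is that $mL$ must sit inside the fixed part of the divisor being made nef, with the scaling happening in a separate ample direction; scaling $K_X+\Delta$ against $L$, as you do, moves the wrong coefficient.
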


\begin{proof}
Let $A$ be an effective big divisor such that:
\begin{enumerate}
\item[(a)] $K_X+\Delta+mL+A$ is nef, and
\item[(b)] for all $0<s<1$ there exists an effective $\Q$-divisor $E_s$ such that $E_s\sim_\Q \Delta+mL+sA$ and the pair $\big(X,E_s+(1-s)A\big)$ is klt.
\end{enumerate}
Such a divisor exists: by Bertini's theorem we may take $A$ to be a sufficiently ample $\Q$-divisor with small coefficients. However, we want to stress that, for inductive purposes, it is necessary to assume only bigness of $A$.

Let 
$$\lambda:=\min\{t\geq0\mid K_X+\Delta+mL+tA\text{ is nef}\}\leq1.$$
If $\lambda=0$, there is nothing to prove, hence we may assume that $\lambda>0$. Pick a positive rational number $\mu<\lambda$ and let $E_\mu$ be a divisor as in (b). Then the pair $\big(X,E_\mu+(\lambda-\mu)A\big)$ is also klt, and note that
\begin{equation}\label{eq:ray}
K_X+E_\mu+(\lambda-\mu)A\sim_\Q K_X+\Delta+mL+\lambda A.
\end{equation}
Then by \cite[Lemma 2.2]{KMM94} there exists a $(K_X+E_\mu)$-negative extremal ray $R$ such that 
$$\big(K_X+E_\mu+(\lambda-\mu)A\big)\cdot R=0.$$ 
In particular, we have $A\cdot R>0$, hence \eqref{eq:ray} implies that
\begin{equation}\label{eq:negative}
(K_X+\Delta+mL)\cdot R<0,
\end{equation}
and therefore
\begin{equation}\label{eq:negative1}
(K_X+\Delta)\cdot R<0,
\end{equation}
since $L$ is nef. Let $c_R\colon X\to Y$ be the contraction of $R$. By the boundedness of extremal rays \cite[Theorem 1]{Kaw91}, there exists a curve $C$ whose class belongs to $R$ such that 
$$(K_X+\Delta)\cdot C\geq-2n.$$ 
If $L\cdot C>0$, then $L\cdot C\geq 1$ since $L$ is Cartier, hence $(K_X+\Delta+mL)\cdot R>0$, which contradicts \eqref{eq:negative}. Therefore, $L\cdot C=0$ and hence $c_R$ is $L$-trivial. 

If $c_R$ is divisorial, set $\theta := c_R$, and if $c_R$ is small, let $\theta\colon X\dashrightarrow X'$ be the corresponding flip. Then $\theta_*L$ is a nef Cartier divisor and $K_{X'}+\theta_*\Delta+m\theta_*L+\lambda\theta_*A$ is nef. Since the map $\theta$ is $\big(K_X+E_s+(\lambda-s)A\big)$-negative for each $0<s<\lambda$, for each $0<s\leq\lambda$ the pair $\big(X',\theta_*E_s+(\lambda-s)\theta_*A\big)$ is klt. Now we replace $X$ by $X'$, $\Delta$ by $\theta_*\Delta$, $L$ by $\theta_*L$, $A$ by $\lambda \theta_*A$, and $E_s$ by $\theta_*E_{s/\lambda}$. Then (a) and (b) continue to hold, and therefore we can continue the procedure. Since this process defines a sequence of operations of a $(K_X+\Delta)$-MMP by \eqref{eq:negative1}, the process must terminate by assumption.
\end{proof}

The following result implies Theorem \ref{thm:reduction}.

\begin{cor}\label{cor:genAbundancereduction1}
Assume the termination of klt flips in dimension $n$. Then the  nonvanishing part of the  Generalised Abundance Conjecture in dimension $n$ implies the Generalised Nonvanishing Conjecture in dimension $n$. \end{cor}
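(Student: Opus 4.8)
The plan is to deduce Corollary \ref{cor:genAbundancereduction1} directly from Proposition \ref{pro:contocon} together with the descent results for num-effectivity proved in the Preliminaries. Let $(X,\Delta)$ be a projective klt pair of dimension $n$ with $K_X+\Delta$ pseudoeffective, and let $L$ be a nef $\Q$-divisor on $X$; we must show that $K_X+\Delta+tL$ is num-effective for every $t\geq0$. Since num-effectivity of $K_X+\Delta+tL$ for a fixed $t>0$ only depends on the ray spanned by $L$, and since we may clear denominators, there is no loss in assuming $L$ is a nef Cartier divisor; it then suffices to prove num-effectivity of $K_X+\Delta+tL$ for all integers $t>2n$, because the pseudoeffective cone is closed and the segment joining $K_X+\Delta$ (num-effective by the classical Nonvanishing Conjecture, which is the case $L=0$ of the hypothesis) to $K_X+\Delta+(2n+1)L$ lies in the effective cone by convexity, covering all intermediate $t$.

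So fix an integer $m>2n$. First I would apply Proposition \ref{pro:contocon} to obtain an $L$-trivial $(K_X+\Delta)$-MMP $\varphi\colon X\dashrightarrow Y$ such that $K_Y+\varphi_*\Delta+m\varphi_*L$ is nef; here $\varphi_*L$ is again a nef Cartier divisor and $(Y,\varphi_*\Delta)$ is klt of dimension $n$. Since $\varphi$ is a $(K_X+\Delta)$-MMP and also $L$-trivial, it is $(K_X+\Delta+mL)$-negative, so the numerical dimension and num-effectivity of $K_X+\Delta+mL$ agree with those of its strict transform on $Y$; more precisely, choosing a common resolution $(p,q)\colon W\to X\times Y$ we have $p^*(K_X+\Delta+mL)=q^*(K_Y+\varphi_*\Delta+m\varphi_*L)+E$ with $E\geq0$ $q$-exceptional, and by \eqref{eq:compare} num-effectivity of the left side is equivalent to num-effectivity of $q^*(K_Y+\varphi_*\Delta+m\varphi_*L)+E$, which by Lemma \ref{lem:descenteff1} applied to $q$ (both $W$ and $Y$ having rational singularities) is equivalent to num-effectivity of $K_Y+\varphi_*\Delta+m\varphi_*L$ on $Y$.

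Now on $Y$ the divisor $K_Y+\varphi_*\Delta+m(\varphi_*L)$ is nef, with $K_Y+\varphi_*\Delta$ pseudoeffective and $m\varphi_*L$ nef, so the hypothesis — the nonvanishing part of the Generalised Abundance Conjecture in dimension $n$, which asserts num-effectivity for klt pairs $(X',\Delta')$ and nef divisors $L'$ with $K_{X'}+\Delta'+L'$ nef — applies with $L'=m\varphi_*L$ and yields that $K_Y+\varphi_*\Delta+m\varphi_*L$ is num-effective. Tracing the equivalences backwards gives that $K_X+\Delta+mL$ is num-effective, and since this holds for every integer $m>2n$, the convexity argument of the first paragraph finishes the proof.

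I expect the only genuinely delicate point to be the bookkeeping in the second paragraph: one must check carefully that an $L$-trivial $(K_X+\Delta)$-MMP does not affect num-effectivity of $K_X+\Delta+mL$. This is precisely the content of the discussion of $D$-negative birational maps around \eqref{eq:compare} and of Lemma \ref{lem:descenteff1}; nothing deeper is needed. Everything else is a formal consequence of Proposition \ref{pro:contocon} and the hypothesis. (If one prefers to avoid the convexity reduction, one can instead invoke the stated hypothesis together with the classical case $L=0$ at $t=0$ and note that the num-effective cone is a convex cone containing the segment, but the argument is the same.)
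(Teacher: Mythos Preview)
Your proof is correct and follows essentially the same approach as the paper's: apply Proposition~\ref{pro:contocon} to reach a model where $K_Y+\varphi_*\Delta+m\varphi_*L$ is nef, invoke the hypothesis there, and transport num-effectivity back to $X$ via a common resolution, concluding by convexity with the $L=0$ case. Two cosmetic remarks: your citation of \eqref{eq:compare} is inapposite (that equation concerns $\kappa$ and $\nu$, not num-effectivity---the claim you need is immediate from $\sim_\Q$), and the descent is slightly cleaner if you apply Lemma~\ref{lem:descenteff1} to $p$ rather than $q$, as the paper does, writing $K_X+\Delta+mL\equiv p_*(q^*G+E)$ directly.
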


In other words, it is sufficient to prove the  Generalised Nonvanishing Conjecture under the additional assumption that $K_X + \Delta +L$ is nef, provided the termination of klt flips.

\begin{proof}
Let $(X,\Delta)$ be a projective klt pair of dimension $n$ such that $K_X+\Delta$ is pseudoeffective, let $L$ be a nef Cartier divisor on $X$, and let $m>2n$ be a positive integer. Proposition \ref{pro:contocon} implies that there exists an $L$-trivial $(K_X+\Delta)$-MMP $\varphi\colon X\dashrightarrow Y$ such that $K_Y+\varphi_*\Delta+m\varphi_*L$ is nef, hence $K_Y+\varphi_*\Delta+m\varphi_*L$ is nef for every $t\geq m$. In particular, there exists a resolution of indeterminacies $(p,q)\colon W\to X\times Y$ of $\varphi$ with $W$ smooth such that for all $t\geq0$ we have
$$p^*(K_X+\Delta+tL)\sim_\Q q^*(K_Y+\varphi_*\Delta+t\varphi_*L)+E,$$
where $E$ is effective and $q$-exceptional. By the nonvanishing part of the Generalised Abundance Conjecture, for each $t\geq m$ there exists an effective $\Q$-divisor $G_t\equiv K_Y+\varphi_*\Delta+t\varphi_*L$, hence
$$K_X+\Delta+tL\equiv p_*(q^*G_t+E).$$
Since $ p_*(q^*G_t+E)$  is $\Q$-effective, the divisor $K_X+\Delta+tL$ is num-effective for $t\geq m$. Since we assume the termination of flips and the nonvanishing part of the Generalised Abundance Conjecture, the divisor $K_X+\Delta$ num-effective, and this implies that the divisor $K_X+\Delta+tL$ is num-effective for all $t\geq 0$, which was to be shown.
\end{proof}

\section{Proofs of Theorems \ref{main_theorem1} and \ref{main_theorem2}}\label{sec:main}

In this section we prove the main results of this paper, Theorems \ref{main_theorem1} and \ref{main_theorem2}. Corollary \ref{cor:dim3} is then immediate. We conclude with a finite generation result, Corollary \ref{cor:fingen}.

\medskip

We start with two preparatory results. Let $(X,\Delta)$ be a projective klt pair such that $K_X+\Delta$ is pseudoeffective, and let $L$ be a nef Cartier divisor on $X$. Then we show that the maximal nef dimension of $K_X + \Delta +L $ forces the Kodaira dimension of $K_X+\Delta+L$ to be maximal, assuming the Abundance Conjecture and the 
Generalised Nonvanishing Conjecture.

\begin{lem}\label{lem:nred=n}
Assume the Abundance Conjecture in dimension $n$ and the Generalised Nonvanishing Conjecture in dimension $n$. 

Let $(X,\Delta)$ be a projective klt pair of dimension $n$ with $K_X+\Delta$ nef, and let $L$ be a nef divisor on $X$. Then $n(X,K_X+\Delta+L)=n$ if and only if $K_X+\Delta+L$ is big.
\end{lem}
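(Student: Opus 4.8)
The plan is to prove both implications of the equivalence $n(X, K_X+\Delta+L) = n \iff K_X+\Delta+L$ is big, using the standing assumptions (Abundance and Generalised Nonvanishing in dimension $n$).

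\textbf{The easy direction.} If $K_X+\Delta+L$ is big, then $K_X+\Delta+L$ is a nef and big divisor, so by the base-point free theorem (or directly, since a nef and big divisor has strictly positive intersection with every curve through a very general point, as its non-ample locus is a proper subvariety), we get $n(X, K_X+\Delta+L) = n$. More precisely, I would argue: a nef and big $\Q$-divisor $D$ satisfies $D = A + E$ numerically with $A$ ample and $E$ effective, so for a curve $C$ avoiding $\Supp E$ we have $D \cdot C = A\cdot C + E\cdot C > 0$; since such curves pass through a very general point, Theorem \ref{thm:nefreduction} gives $n(X,D) = \dim X$.

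\textbf{The hard direction.} Assume $n(X, K_X+\Delta+L) = n$; I want to conclude bigness. Since $K_X+\Delta$ is nef, by the Abundance Conjecture in dimension $n$ the divisor $K_X+\Delta$ is semiample; let $g\colon X \to T$ be the associated fibration (the Iitaka fibration of $K_X+\Delta$), so $K_X+\Delta \sim_\Q g^*H$ for an ample $\Q$-divisor $H$ on $T$ and $\dim T = \kappa(X,K_X+\Delta) = \nu(X,K_X+\Delta)$. On a general fibre $G$ of $g$ we have $(K_X+\Delta)|_G \equiv 0$, and by adjunction $(K_G + \Delta|_G)$ is the restriction of $K_X+\Delta$ up to a klt boundary, hence $\equiv 0$. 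Now $L|_G$ is nef, and by the Generalised Nonvanishing Conjecture applied to the Calabi-Yau pair $(G, \Delta|_G)$ with the nef divisor $L|_G$, the divisor $(K_G+\Delta|_G) + t\,L|_G \equiv t\,L|_G$ is num-effective for all $t \geq 0$; in particular $L|_G$ is num-effective. I then need to bootstrap this fibrewise information: the key point is that $n(X,K_X+\Delta+L) = n$ forces, via Lemma \ref{lem:nefredsum} and Lemma \ref{lem:maxneffibres}, that $n(G, L|_G) = \dim G$ for a general fibre $G$ — but a nef divisor that is num-effective and has maximal nef dimension on $G$ should be big on $G$ (here I combine: if $L|_G$ is num-effective write $L|_G \equiv N \geq 0$; if $L|_G$ were not big then $\nu(G, L|_G) < \dim G$, and a nef divisor of non-maximal numerical dimension cannot have maximal nef dimension, since $n(G,L|_G) \le$ something controlled by $\nu$... actually the clean statement is that for a nef divisor $\nu \geq n$ iff big, and $n(G,L|_G)=\dim G$ forces positivity on a covering family of curves). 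So $L|_G$ is big, hence $(K_X+\Delta+L)|_G = (K_X+\Delta)|_G + L|_G$ is big on $G$.

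\textbf{Assembling the global bigness.} Finally I combine the bigness along general fibres of $g$ with the semiampleness of $K_X+\Delta = g^*H$ to conclude $K_X+\Delta+L$ is big on $X$. Concretely: $K_X+\Delta+L \equiv g^*H + L$; since $L|_G$ is big for the general fibre $G$, Nakayama's easy additivity for numerical/Iitaka dimension over a fibration (cf. \cite[Proposition II.3.7 and Chapter V]{Nak04}, or simply $\nu(X, g^*H+L) \geq \dim T + \nu(G,L|_G) = \dim T + \dim G = n$) shows $K_X+\Delta+L$ has numerical dimension $n$; being also nef, it is big. The main obstacle I expect is the middle step — upgrading "num-effective and maximal nef dimension on the general fibre" to "big on the general fibre" — which requires care about whether Generalised Nonvanishing really delivers enough (only num-effectivity of $tL|_G$, not bigness), so one must genuinely exploit the maximal nef dimension hypothesis transported to the fibre via Lemmas \ref{lem:nefredsum}, \ref{lem:maxneffibres}, \ref{lem:nefdimensionandpullback}, rather than hoping nonvanishing alone suffices.
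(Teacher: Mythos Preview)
Your proof has a genuine gap in the ``hard direction''. The crucial step is your claim that on a general fibre $G$ of the Iitaka fibration of $K_X+\Delta$, the nef divisor $L|_G$ is big once you know it is num-effective and has maximal nef dimension. But this is precisely the statement of the lemma applied to the klt Calabi-Yau pair $(G,\Delta|_G)$ and the nef divisor $L|_G$: you are invoking the lemma in dimension $\dim G$. When $\dim T>0$ this is a lower dimension, which is \emph{not} covered by the hypotheses (only dimension $n$ is assumed); when $\dim T=0$ you have $G=X$ and the argument is literally circular. Your attempted escape --- ``a nef divisor of non-maximal numerical dimension cannot have maximal nef dimension'' --- is false in general: this is exactly the phenomenon the Semiampleness Conjecture addresses (a nef divisor on a Calabi-Yau can have $n(X,L)=\dim X$ while $\nu(X,L)<\dim X$), and it is why the lemma requires the conjectural inputs at all.

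The paper avoids this entirely by a different, and much shorter, manoeuvre. It applies the Generalised Nonvanishing Conjecture directly on $X$ (in dimension $n$, as assumed) to produce an effective $M\equiv K_X+\Delta+L$, then absorbs a small multiple $\varepsilon M$ into the boundary: the pair $(X,\Delta+\varepsilon M)$ is klt and $K_X+\Delta+\varepsilon M\equiv(1+\varepsilon)(K_X+\Delta)+\varepsilon L$ is nef. Now Abundance in dimension $n$ makes $K_X+\Delta+\varepsilon M$ semiample, and comparing its Iitaka fibration with its nef reduction (via Lemmas~\ref{lem:nefredsum} and~\ref{lem:nefdimensionandpullback}) forces the Iitaka map to be birational, hence $K_X+\Delta+\varepsilon M$ is big; adding the pseudoeffective $L$ preserves bigness. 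No fibrewise argument, no induction, and the hypotheses are used only in dimension $n$ as stated.
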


\begin{proof}
One direction is obvious: if $D$ is any divisor on $X$ which is nef and big, then $n(X,D) = n$. 

Conversely, assume that $n(X,K_X+\Delta+L)=n$. By the Generalised Nonvanishing Conjecture, there exists an effective $\Q$-divisor $M$ such that $K_X+\Delta+L\equiv M$.  Choose a positive rational number $\varepsilon$ such that the pair $(X,\Delta+\varepsilon M)$ is klt. 
Since we assume the Abundance Conjecture in dimension $n$, the nef divisor $K_X+\Delta+\varepsilon M$ is semiample. Let $\varphi\colon X\to Y$ be the associated Iitaka fibration. Then there exists an ample $\Q$-divisor $A$ on $Y$ such that 
$$K_X+\Delta+\varepsilon M\sim_\Q\varphi^*A,$$ 
hence by Lemma \ref{lem:nefdimensionandpullback},
$$n(X,K_X+\Delta+\varepsilon M)=n(Y,A)=\dim Y.$$
On the other hand, since $K_X+\Delta+\varepsilon M\equiv(1+\varepsilon)(K_X+\Delta)+\varepsilon L$, we have 
$$n(X,K_X+\Delta+\varepsilon M)=n$$
by Lemma \ref{lem:nefredsum}. Therefore, $\dim Y=n$, the morphism $\varphi$ is birational and $K_X+\Delta+\varepsilon M$ is big. Since
$$\textstyle (1+\varepsilon)(K_X+\Delta+L)=K_X+\Delta+\varepsilon M+L,$$
the result follows.
\end{proof}

\begin{lem}\label{lem:maxnefdimBig}
Assume the termination of klt flips in dimension $n$, the Generalised Nonvanishing Conjecture in dimensions at most $n-1$, and the Abundance Conjecture in dimensions at most $n$. 

Let $(X,\Delta)$ be an $n$-dimensional projective klt pair such that $K_X+\Delta$ is pseudoeffective and let $L$ be a nef $\Q$-divisor on $X$. Assume that $K_X+\Delta+L$ is nef and that $n(X,K_X+\Delta+L)=n$. 
\begin{enumerate}
\item[(i)] If $\nu(X,K_X+\Delta)>0$, then $K_X+\Delta+tL$ is big for every $t>0$. 
\item[(ii)] Assume additionally the Semiampleness Conjecture in dimension $n$. If $\nu(X,K_X+\Delta)=0$, then $K_X+\Delta+tL$ is big for every $t>0$. 
\end{enumerate}
\end{lem}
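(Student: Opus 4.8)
The plan is to reduce both statements to a situation where $K_X+\Delta$ is nef, so that Lemma \ref{lem:nred=n} applies, and then to leverage the extra input ($\nu>0$ plus Abundance, or $\nu=0$ plus the Semiampleness Conjecture) to produce a num-effective divisor numerically equivalent to $K_X+\Delta$. The first step is to run a $(K_X+\Delta)$-MMP with scaling; by the termination of flips in dimension $n$ and the Abundance Conjecture in dimension $\le n$, this terminates with a good model. More precisely, since $K_X+\Delta+L$ is nef and $L$ is nef, for $m\gg 0$ one has $K_X+\Delta+mL$ nef, and by Proposition \ref{pro:contocon} (applied after clearing denominators so that $L$ is Cartier) we may pass to an $L$-trivial $(K_X+\Delta)$-MMP $\varphi\colon X\dashrightarrow X'$. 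Because every step is $L$-trivial and also $(K_X+\Delta+L)$-trivial, the strict transforms of $L$ and of $K_X+\Delta+L$ stay nef, the numerical dimension $\nu(X',K_{X'}+\Delta')$ equals $\nu(X,K_X+\Delta)$, and by Lemma \ref{lem:nefredsum} (using that $K_X+\Delta+L$ is nef with maximal nef dimension and $L$ is pseudoeffective) the nef dimension $n(X',K_{X'}+\Delta'+L')$ is again maximal; finally bigness of $K_X+\Delta+tL$ descends from bigness of $K_{X'}+\Delta'+tL'$ along a $(K_X+\Delta)$-negative — hence $(K_X+\Delta+tL)$-negative — birational map. So we may assume $K_X+\Delta$ is nef.

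Now with $K_X+\Delta$ nef, I would produce a num-effective $M\equiv K_X+\Delta$ and then finish using Lemma \ref{lem:nred=n} together with convexity. In case (i), $\nu(X,K_X+\Delta)>0$: by the Abundance Conjecture in dimension $n$, $K_X+\Delta$ is semiample, in particular num-effective, so there is an effective $\Q$-divisor $M\equiv K_X+\Delta$. In case (ii), $\nu(X,K_X+\Delta)=0$: then $(X,\Delta)$ becomes, after a small perturbation or directly by Nakayama's result on $\nu=0$ divisors, numerically equivalent to an effective divisor, and in fact the pair is numerically Calabi-Yau in the sense that some birational model has $K+\Delta\equiv 0$ on a Calabi-Yau fibration; here one invokes the Semiampleness Conjecture in dimension $n$ to conclude that $L$ itself (on the relevant Calabi-Yau model, where $K_X+\Delta\equiv 0$) is num-semiample, hence in particular $K_X+\Delta+L\equiv L$ is num-semiample and we again have an effective $M\equiv K_X+\Delta+L$ — but for the bigness argument below it is cleaner to keep $M\equiv K_X+\Delta$ effective, which when $\nu=0$ follows from $K_X+\Delta$ being numerically trivial after passing to a good model (Abundance in dimension $n$ gives $\kappa=\nu=0$, so $K_X+\Delta\sim_\Q 0$ up to numerical equivalence). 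In either case we obtain an effective $\Q$-divisor $M\equiv K_X+\Delta$.

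Having such an $M$, choose a rational $\varepsilon>0$ small enough that $(X,\Delta+\varepsilon M)$ is klt; then $K_X+\Delta+\varepsilon M\equiv (1+\varepsilon)(K_X+\Delta)+\varepsilon L \cdot(1/1)$ — more precisely $K_X+\Delta+\varepsilon M \equiv (1+\varepsilon)(K_X+\Delta)$, which is nef, and I would instead perturb towards $L$: since $K_X+\Delta+L$ is nef with maximal nef dimension and $K_X+\Delta\equiv M\ge 0$, Lemma \ref{lem:nefredsum} gives that $K_X+\Delta+tL$ has maximal nef dimension whenever it is nef, and it is nef for all $t\in(0,1]$ by convexity of the nef cone. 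Applying Lemma \ref{lem:nred=n} to the klt pair $(X,\Delta+\varepsilon M)$ — whose adjoint divisor is $\equiv (1+\varepsilon)(K_X+\Delta)+\varepsilon M'$ for a suitable rearrangement, or simply noting $K_X+(\Delta+\varepsilon M)+L\equiv (1+\varepsilon)(K_X+\Delta+\tfrac{1}{1+\varepsilon}L)$ is nef of maximal nef dimension — yields that $K_X+\Delta+tL$ is big for $t$ near $1$, and then for all $t>0$ by the easy direction (scaling and the fact that adding a pseudoeffective multiple of $L$ to a big divisor keeps it big, while for $t<1$ one writes $K_X+\Delta+tL = (1-t)(K_X+\Delta)+t(K_X+\Delta+L)$ as a sum of a pseudoeffective and a big class). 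The main obstacle is the bookkeeping in case (ii): making precise how the Semiampleness Conjecture on the Calabi-Yau model feeds back to give an effective representative of $K_X+\Delta+tL$ on $X$ itself, and checking that maximal nef dimension is genuinely preserved under the $L$-trivial MMP — this is where Lemma \ref{lem:nefredsum} and Lemma \ref{lem:nefdimensionandpullback} must be combined carefully with the fact that the MMP steps are simultaneously $L$-trivial and $(K_X+\Delta+L)$-trivial.
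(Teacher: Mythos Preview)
Your proposal has two genuine gaps, one in the reduction step and one in case (i).

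\textbf{The reduction to $K_X+\Delta$ nef.} You invoke Proposition~\ref{pro:contocon} to run an $L$-trivial $(K_X+\Delta)$-MMP and claim each step is also $(K_X+\Delta+L)$-trivial. This is contradictory: if a step contracts a ray $R$ with $L\cdot R=0$ and $(K_X+\Delta)\cdot R<0$, then $(K_X+\Delta+L)\cdot R<0$, not $0$. But $K_X+\Delta+L$ is nef by hypothesis, so no such ray exists --- the MMP of Proposition~\ref{pro:contocon} is the identity here and does not get you to $K_X+\Delta$ nef. The paper instead runs the $(K_X+\Delta)$-MMP \emph{with scaling of $L$} from \S\ref{subsec:scaling}, which is \emph{not} $L$-trivial; each step is $\big(\lambda_i(K_X+\Delta)+L\big)$-trivial for a growing sequence $\lambda_i$, and one terminates with $K_{X_\ell}+\Delta_\ell$ nef and $s(K_{X_\ell}+\Delta_\ell)+L_\ell$ nef for $s\ge\lambda_{\ell-1}$. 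Preservation of maximal nef dimension is then established step by step (property (d)) via Lemmas~\ref{lem:nefredsum} and~\ref{lem:nefdimensionandpullback}.

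\textbf{Case (i).} After reducing to $K_X+\Delta$ nef, you try to feed the pair $(X,\Delta+\varepsilon M)$ with $M\equiv K_X+\Delta$ into Lemma~\ref{lem:nred=n}. But Lemma~\ref{lem:nred=n} requires the Generalised Nonvanishing Conjecture in dimension $n$, which is \emph{not} among the hypotheses --- only dimensions $\le n-1$ are assumed. Your perturbation by $M\equiv K_X+\Delta$ gives $K_X+\Delta+\varepsilon M\equiv(1+\varepsilon)(K_X+\Delta)$, which carries no information about $L$, and the alternative of applying Lemma~\ref{lem:nred=n} to $(X,\Delta+\varepsilon M)$ with the nef divisor $L$ is circular for the same reason. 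The paper's key idea, which your proposal is missing entirely, is to use the Iitaka fibration $\varphi\colon X\to Y$ of the semiample divisor $K_X+\Delta$ (here $\dim Y=\kappa(X,K_X+\Delta)>0$). On a general fibre $F$, one has $\dim F<n$ and $n\big(F,(K_X+\Delta+L)|_F\big)=\dim F$ by Lemma~\ref{lem:maxneffibres}, so Lemma~\ref{lem:nred=n} \emph{does} apply on $F$ (GNC in dimension $\le n-1$ is assumed) and gives $(K_X+\Delta+L)|_F$ big. Thus $K_X+\Delta+L$ is $\varphi$-big; one then writes $K_X+\Delta+L\sim_\Q G+\varphi^*D$ with $G\ge0$, applies the relative Basepoint Free Theorem to $K_X+\Delta+\delta G$, and combines the resulting relatively ample class with the $\varphi$-pullback of an ample divisor on $Y$ to produce a big combination of $K_X+\Delta$ and $L$.

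Your treatment of case (ii) is essentially correct and matches the paper's Step~3: once $K_X+\Delta$ is nef with $\nu=0$, Abundance gives $K_X+\Delta\sim_\Q0$, and the Semiampleness Conjecture on the Calabi-Yau pair $(X,\Delta)$ finishes.
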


\begin{proof}
\emph{Step 1.}
As explained in \S\ref{subsec:scaling}, we may construct a sequence of divisorial contractions or flips 
$$\varphi_i\colon (X_i,\Delta_i)\dashrightarrow (X_{i+1},\Delta_{i+1})$$ 
of a $(K_X+\Delta)$-MMP with $(X_0,\Delta_0):=(X,\Delta)$, a sequence of divisors 
$$L_i:=(\varphi_{i-1}\circ\dots\circ\varphi_0)_*L$$
on $X_i$, and a sequence of rational numbers $\lambda_i$ for $i\geq0$ with the following properties:
\begin{enumerate}
\item[(a)] the sequence $\{\lambda_i\}_{i\geq0}$ is non-decreasing,
\item[(b)] each $\lambda_i(K_{X_i}+\Delta_i)+L_i$ is nef and $\varphi_i$ is $\big(\lambda_i(K_{X_i}+\Delta_i)+L_i\big)$-trivial,
\item[(c)] the map $\varphi_i\circ\dots\circ\varphi_0$ is $\big(s(K_X+\Delta)+L)$-negative for $s>\lambda_i$.
\end{enumerate}
Additionally, we claim that
\begin{enumerate}
\item[(d)] $(K_{X_i}+\Delta_i+tL_i)\cdot C_i>0$ for every curve $C_i$ on $X_i$ passing through a very general point on $X_i$ and for every $t>0$.
\end{enumerate}
Indeed, this holds on $X_0$ by Lemma \ref{lem:nefredsum}. If it holds on $X_{k-1}$, then we have $n\big(X_{k-1},\lambda_{k-1}(K_{X_{k-1}}+\Delta_{k-1})+L_{k-1}\big)=n$, and hence $n\big(X_k,\lambda_{k-1}(K_{X_k}+\Delta_k)+L_k\big)=n$ by (b) and by Lemma \ref{lem:nefdimensionandpullback}. The claim follows from Lemma \ref{lem:nefredsum}.

\medskip

\emph{Step 2.}
By the termination of flips in dimension $n$, this process must terminate with a pair $(X_\ell,\Delta_\ell)$ such that $K_{X_\ell}+\Delta_\ell$ is nef, and then (b) implies that
$$s(K_{X_\ell}+\Delta_\ell)+L_\ell\quad\text{is nef for all }s\geq\lambda_{\ell-1}.$$
By (d) we have 
$$n\big(X_\ell,s(K_{X_\ell}+\Delta_\ell)+L_\ell\big)=n\quad\text{for all }s\geq\lambda_{\ell-1}.$$
Then it suffices to show that $(s_0+1)(K_{X_\ell}+\Delta_\ell)+L_\ell$ is big for some fixed $s_0\geq\lambda_{\ell-1}$. Indeed, then the divisor $(s_0+1)(K_X+\Delta)+L$ is big thanks to (c). Since $K_X+\Delta$ and $L$ are pseudoeffective, one deduces immediately that $K_X+\Delta+tL$ is big for all $t>0$. 

Therefore, by replacing $X$ by $X_\ell$, $\Delta$ by $\Delta_\ell$ and $L$ by $s_0(K_{X_\ell}+\Delta_\ell)+L_\ell$, we may assume that $K_X+\Delta$ is nef, and it suffices to show that $K_X+\Delta+L$ is big.

\medskip

\emph{Step 3.}
Since we are assuming the Abundance Conjecture in dimension $n$, the divisor $K_X+\Delta$ is semiample and we have $\nu(X,K_X+\Delta)=\kappa(X,K_X+\Delta)$. Assume first that 
$$\nu(X,K_X+\Delta)=\kappa(X,K_X+\Delta)=0.$$
Then $K_X+\Delta\sim_\Q0$, and the lemma follows since in this case we assume the Semiampleness Conjecture in dimension $n$.

\medskip

\emph{Step 4.}
Therefore, we may assume that 
$$\nu(X,K_X+\Delta)=\kappa(X,K_X+\Delta)>0,$$
and let $\varphi\colon X\to Y$ be the Iitaka fibration associated to $K_X+\Delta$. Then for a general fibre $F$ of $\varphi$ we have 
$$n\big(F,(K_X+\Delta+L)|_F\big)=\dim F$$
by Lemma \ref{lem:maxneffibres}. This, together with Lemma \ref{lem:nred=n} and the assumptions of the lemma imply that $(K_X+\Delta+L)|_F$ is big, and hence $K_X+\Delta+L$ is $\varphi$-big. 

There exists an ample $\Q$-divisor $H$ on $Y$ such that $K_X+\Delta\sim_\Q\varphi^*H$. By \cite[Lemma 3.2.1]{BCHM} there exists an effective divisor $G$ on $X$ and a $\Q$-Cartier divisor $D$ on $Y$ such that
$$K_X+\Delta+L\sim_\Q G+\varphi^*D.$$
Note that $G$ is $\varphi$-nef and $\varphi$-big since $K_X+\Delta+L$ is nef and $\varphi$-big.

Let $\delta$ be a positive rational number such that the pair $(X,\Delta+\delta G)$ is klt. Then $K_X+\Delta+\delta G$ is $\varphi$-nef and $\varphi$-big, hence $K_X+\Delta+\delta G$ is $\varphi$-semiample 
by the relative Basepoint free theorem \cite[Theorem 3.24]{KM98}. Let 
$$\theta\colon X\to T$$
be the associated relative Iitaka fibration, and note that $\theta$ is birational. Let $\zeta\colon T\to Y$ be the induced morphism making the following diagram commutative:
\[
\xymatrix{ 
X \ar[d]_{\varphi} \ar[r]^{\theta} & T \ar[dl]^\zeta\\
\ \  Y. &
}
\]
Then there exists a $\zeta$-ample $\Q$-divisor $A$ on $T$ such that 
$$K_X+\Delta+\delta G\sim_\Q\theta^*A.$$
Since $A+\zeta^*(\delta D)$ is $\zeta$-ample, there exists a positive rational number $\lambda$ such that $A+\zeta^*(\delta D)+\zeta^*(\lambda H)$ is ample. Then
\begin{align*}
(1+\lambda)(K_X+\Delta)+\delta(K_X+\Delta+L)&\sim_\Q(K_X+\Delta+\delta G)+\varphi^*(\lambda H+\delta D)\\
&\sim_\Q\theta^*\big(A+\zeta^*(\lambda H+\delta D)\big),
\end{align*}
and therefore the divisor $K_X+\Delta+\frac{\delta}{1+\lambda+\delta} L$ is big. But then it is clear that $K_X+\Delta+L$ is big since $K_X+\Delta$ and $L$ are pseudoeffective. This finishes the proof.
\end{proof}

The following theorem is the main technical result of this paper.

\begin{thm}\label{thm:semiampleP_sigma}
Assume the termination of klt flips in dimension $n$, the Abundance Conjecture in dimensions at most $n$, the Semiampleness Conjecture in dimensions at most $n$, and the Generalised Nonvanishing Conjecture in dimensions at most $n-1$. 

Let $(X,\Delta)$ be an $n$-dimensional projective klt pair such that $K_X+\Delta$ is pseudoeffective and let $L$ be a nef Cartier divisor on $X$. Then there exists a birational morphism $w\colon W\to X$ from a smooth projective variety $W$ such that $P_\sigma\big(w^*(K_X+\Delta+L)\big)$ is a num-semiample $\Q$-divisor.
\end{thm}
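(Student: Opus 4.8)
The plan is to run the two MMPs described in the introduction and split into cases according to the nef dimension of $K_X+\Delta+mL$ for $m\gg0$. First I would invoke Proposition \ref{pro:contocon}: since $L$ is nef Cartier and $K_X+\Delta$ is pseudoeffective, for a fixed integer $m>2n$ there is an $L$-trivial $(K_X+\Delta)$-MMP $\varphi\colon X\dashrightarrow X_1$ such that $K_{X_1}+\varphi_*\Delta+m\varphi_*L$ is nef. Because $\varphi$ is $L$-trivial, its strict transform of $L$ remains nef and Cartier, and one checks via Lemma \ref{lem:Nakayama1} (applied on a common resolution, absorbing the $q$-exceptional divisor into $N_\sigma$) that it suffices to prove the statement for the pair $(X_1,\varphi_*\Delta)$ and the divisor $\varphi_*L$; so we may assume from the start that $K_X+\Delta+mL$ is nef for $m\gg0$. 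Now set $D:=K_X+\Delta+mL$, a nef divisor, and let $n(X,D)$ be its nef dimension.

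\textbf{Case 1: $n(X,D)=n$.} By Lemma \ref{lem:nefredsum} (applied with $F=K_X+\Delta$, $G=mL$, noting $K_X+\Delta+L$ is nef — or rather first reducing to that situation via a further $L$-trivial MMP exactly as in Step 1--2 of Lemma \ref{lem:maxnefdimBig}), maximality of the nef dimension propagates, and Lemma \ref{lem:maxnefdimBig} applies — this is the step using the Semiampleness Conjecture when $\nu(X,K_X+\Delta)=0$, and only the MMP conjectures otherwise — to conclude that $K_X+\Delta+L$ is big. Once $K_X+\Delta+L$ is big, choose $\varepsilon>0$ small and an effective $\Q$-divisor $A$ with $K_X+\Delta+L\sim_\Q A+\varepsilon H$ for $H$ ample, so that $(X,\Delta+\text{small part})$ is klt with big boundary; running a $(K_X+\Delta+L)$-MMP with scaling terminates by \cite{BCHM} at a good model. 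Pulling the good model back to a common smooth resolution $w\colon W\to X$ and using that $N_\sigma$ absorbs exceptional divisors (Lemma \ref{lem:Nakayama1}) together with $P_\sigma$ of a pullback of a semiample divisor being that pullback (Lemma \ref{lem:Nakayama2}), we get that $P_\sigma(w^*(K_X+\Delta+L))$ is num-semiample.

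\textbf{Case 2: $n(X,D)<n$.} Let $g\colon X\dashrightarrow Z$ be the nef reduction of $D$ (Theorem \ref{thm:nefreduction}); after blowing up we may assume $g$ is a morphism $f\colon X'\to Z$ with $X'$ smooth, and $D|_F\equiv 0$ on a general fibre $F$. By Lemma \ref{lem:numtrivial}, $m>2n$ forces both $(K_{X'}+\Delta')|_F\equiv 0$ and $L'|_F\equiv 0$. Apply Lemma \ref{lem:lehmann} to the nef divisor $L'$: after a birational base change $\pi\colon Z'\to Z$ we obtain a smaller model on which $L'$ is numerically a pullback $f'^*D_{Z'}$. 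Now run a relative $(K+\Delta)$-MMP over $Z'$; on a general fibre $K+\Delta$ has numerical dimension controlled by $\nu(F,(K_X+\Delta)|_F)$, and by Lemma \ref{lem:1} together with the abundance assumption on fibres (dimension $<n$) plus the techniques of \cite{Nak04} and the canonical bundle formula / main result of \cite{Amb05a}, one descends the positive part of $K+\Delta$ along $f'$ as well. Adding the two descended pieces and applying the Generalised Nonvanishing Conjecture in dimension $\dim Z' < n$ on the base, one produces a num-semiample divisor on $Z'$ whose pullback is numerically $P_\sigma$ of the pullback of $K_X+\Delta+L$ on a suitable model $W$; Lemma \ref{lem:descenteff1} then descends num-semiampleness correctly.

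\textbf{Main obstacle.} The essential difficulty is entirely in Case 2: matching up the Nakayama--Zariski positive part $P_\sigma(w^*(K_X+\Delta+L))$ with an honest pullback from the base $Z'$ of the nef reduction. Lemma \ref{lem:lehmann} handles $L$, but one must also control $P_\sigma$ of $K_X+\Delta$ over the base and verify that $P_\sigma$ is \emph{additive enough} along the fibration — i.e.\ that $P_\sigma(w^*(K_X+\Delta+L))$ really is (up to $N_\sigma$ and numerical equivalence) the pullback of the sum of the two base divisors, so that the inductive hypothesis (Generalised Nonvanishing in dimension $\dim Z'$) can be invoked and then pulled back without losing semiampleness. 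This is where the interplay of \cite{Nak04}, \cite{Amb05a}, and Lemmas \ref{lem:Nakayama1}--\ref{lem:Nakayama3} must be orchestrated carefully, and I expect the bulk of the proof to be devoted to it.
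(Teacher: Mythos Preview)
Your overall architecture matches the paper's proof closely: the reduction via Proposition~\ref{pro:contocon}, the split into cases by nef dimension, the use of Lemma~\ref{lem:maxnefdimBig} and \cite{BCHM} in the maximal case, and the combination of Lemma~\ref{lem:lehmann}, a relative MMP, and Ambro's canonical bundle formula in the non-maximal case. However, there is a genuine gap in your Case~2.

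You propose to descend both $K_X+\Delta$ and $L$ to the base and then apply the \emph{Generalised Nonvanishing Conjecture} in dimension $<n$. But that conjecture only yields num-effectivity, not num-semiampleness of $P_\sigma$; so your sentence ``one produces a num-semiample divisor on $Z'$'' does not follow. What the paper actually does is apply \emph{Theorem~\ref{thm:semiampleP_sigma} itself} by induction on the dimension. Concretely: after the relative MMP one takes the relative Iitaka fibration $\tau\colon X\to T$ over $Z$ (so $\dim T<n$), and Ambro's theorem \cite{Amb05a} produces a klt pair $(T,\Delta_T)$ with $K_X+\Delta\sim_\Q\tau^*(K_T+\Delta_T)$; since $L$ is already numerically a pullback from $Z$, and $T$ maps to $Z$, one gets $K_X+\Delta+L\equiv\tau^*(K_T+\Delta_T+L_T)$ for a nef $L_T$ on $T$. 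Now the inductive hypothesis for Theorem~\ref{thm:semiampleP_sigma} on the lower-dimensional klt pair $(T,\Delta_T)$ furnishes a model $w_T\colon W_T\to T$ with $P_\sigma\big(w_T^*(K_T+\Delta_T+L_T)\big)$ num-semiample, and Lemma~\ref{lem:Nakayama2} transports this to $W$. Note also that the induction lives on $T$, the base of the relative Iitaka fibration, and not on $Z'$ directly; your sketch conflates these two bases. The Generalised Nonvanishing hypothesis in dimensions $\leq n-1$ is used only inside Lemma~\ref{lem:maxnefdimBig} (to handle the general fibre of the Iitaka fibration of $K_X+\Delta$ there), not as the inductive input in Case~2.
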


\begin{proof}
\emph{Step 1.}
Fix any integer $m>2n$. In this step we show that we may assume the following:

\medskip

\emph{Assumption 1.} The divisor $K_X+\Delta+mL$ is nef.

\medskip

To this end, by Proposition \ref{pro:contocon} there exists a $(K_X+\Delta)$-MMP $\varphi\colon X\dashrightarrow Y$ which is $L$-trivial such that $K_Y+\varphi_*\Delta+m\varphi_*L$ is nef. Denote $\Delta_Y=\varphi_*\Delta$ and $L_Y:=\varphi_*L$, and note that $K_Y+\Delta_Y$ is pseudoeffective. 

Assume that there exists a birational morphism $v\colon V\to Y$ from a smooth projective variety $V$ such that $P_\sigma\big(v^*(K_Y+\Delta_Y+L_Y)\big)$ is num-semiample. Let $(w_1,w_V)\colon W_1\to X\times V$ be a resolution of indeterminacies of the birational map $v^{-1}\circ\varphi\colon X\dashrightarrow V$ such that $W_1$ is smooth:
\[
\xymatrix{ 
W_1 \ar[d]_{w_1} \ar[r]^{w_V} & V \ar[d]^{v}\\
X\ar@{-->}[r]_{\varphi} & Y.
}
\]
There exists an effective $(v\circ w_V)$-exceptional $\Q$-divisor $E_1$ on $W_1$ such that
$$w_1^*(K_X+\Delta)\sim_\Q w_V^*v^*(K_Y+\Delta_Y)+E_1,$$
and hence
$$w_1^*(K_X+\Delta+L)\sim_\Q w_V^*v^*(K_Y+\Delta_Y+L_Y)+E_1.$$
Thus,
\begin{align*}
P_\sigma\big(w_1^*(K_X+\Delta+L)\big)&\sim_\Q P_\sigma\big(w_V^*v^*(K_Y+\Delta_Y+L_Y)\big)\\
&=w_V^*P_\sigma\big(v^*(K_Y+\Delta_Y+L_Y)\big)
\end{align*}
by Lemmas \ref{lem:Nakayama1} and \ref{lem:Nakayama2}, hence $P_\sigma\big(w_1^*(K_X+\Delta+L)\big)$ is num-semiample.

Therefore, by replacing $X$ by $Y$, $\Delta$ by $\Delta_Y$, and $L$ by $L_Y$, we achieve Assumption 1.

\medskip

\emph{Step 2.}
Assume first that $n(X,K_X+\Delta+mL)=n$. Then $K_X+\Delta+mL$ is big by Lemma \ref{lem:maxnefdimBig}, hence $K_X+\Delta+L$ is also big. By Kodaira's trick, we may choose an effective $\Q$-divisor $E$ and an ample $\Q$-divisor $A$ on $X$ such that $K_X+\Delta+L\sim_\Q A+E$. Pick a small positive rational number $\varepsilon$ such that the pair $(X,\Delta+\varepsilon E)$ is klt. Then
\begin{equation}\label{eq:008b}
(1+\varepsilon)(K_X+\Delta+L)\sim_\Q K_X+(\Delta+\varepsilon E)+(L+\varepsilon A),
\end{equation}
and note that $L+\varepsilon A$ is ample. Denote $\Delta_X:=(\Delta+\varepsilon E)+(L+\varepsilon A)$. By \cite{BCHM}, we may run a $(K_X+\Delta_X)$-MMP $\beta\colon X\dashrightarrow B$ so that $K_B+\Delta_B$ is semiample, where $\Delta_B=\beta_*\Delta_X$. Let $(w_2,w_B)\colon W_2\to X\times B$ be a resolution of indeterminacies of $\beta$. There exists an effective $w_B$-exceptional $\Q$-divisor $E_2$ on $W_2$ such that
$$w_2^*(K_X+\Delta_X)\sim_\Q w_B^*(K_B+\Delta_B)+E_2,$$
and hence by \eqref{eq:008b} and by Lemma \ref{lem:Nakayama1} we have
\begin{align*}
(1+\varepsilon)P_\sigma\big(w_2^*(K_X+\Delta+L)\big)&\sim_\Q P_\sigma\big(w_2^*(K_X+\Delta_X)\big)\\
&\sim_\Q P_\sigma\big(w_B^*(K_B+\Delta_B)\big)=w_B^*(K_B+\Delta_B),
\end{align*}
hence $P_\sigma\big(w_2^*(K_X+\Delta+L)\big)$ is num-semiample. This finishes the proof when $n(X,K_X+\Delta+mL)=n$.

\medskip

\emph{Step 3.}
From now on we assume that $n(X,K_X+\Delta+mL)<n$. In this step we show that we may assume the following:

\medskip

\emph{Assumption 2.} There exists a morphism $\zeta\colon X\to Z$ to a smooth projective variety $Z$ with $\dim Z<\dim X$ and a nef $\Q$-divisor $L_Z$ on $Z$ such that $L\equiv \zeta^*L_Z$ and $\nu\big(F,(K_X+\Delta)|_F\big)=0$ for a general fibre $F$ of $\zeta$. {\it However, we may not any more assume that $K_X+\Delta+L$ is nef.}

\medskip

To this end, let $\zeta\colon X\dashrightarrow Z$ be the nef reduction of $K_X+\Delta+mL$, and recall that $\zeta$ is almost holomorphic. Then $K_X+\Delta+mL$ is numerically trivial on a general fibre of $\zeta$, hence both $K_X+\Delta$ and $L$ are numerically trivial on a general fibre of $\zeta$ by Lemma \ref{lem:numtrivial}.

Let $\big(\widetilde\xi,\widetilde\zeta\big)\colon\widetilde{X}\to X\times Z$ be a smooth resolution of indeterminacies of the map $\zeta$.
\[
\xymatrix{ 
\widetilde X \ar[d]_{\widetilde\xi} \ar[dr]^{\widetilde\zeta} & \\
X\ar@{-->}[r]_{\zeta} & Z,
}
\]
Write 
$$K_{\widetilde{X}}+\widetilde{\Delta}\sim_\Q\widetilde{\xi}^*(K_X+\Delta)+\widetilde{E},$$ 
where $\widetilde{\Delta}$ and $\widetilde E$ are effective $\Q$-divisors without common components; note that $K_{\widetilde{X}}+\widetilde{\Delta}$ is pseudoeffective. Denote $\widetilde{L}:=\widetilde{\xi}^*L$. This implies
\begin{equation}\label{eq:009a}
K_{\widetilde{X}}+\widetilde{\Delta}+\widetilde{L}\sim_\Q\widetilde{\xi}^*(K_X+\Delta+L)+\widetilde{E}.
\end{equation}
Furthermore, if $\widetilde F$ is a general fibre of $\widetilde \zeta$, then $\nu\big(\widetilde F,(K_{\widetilde{X}}+\widetilde{\Delta})|_{\widetilde{F}}\big)=0$ by Lemma \ref{lem:1}, and $\widetilde{L}$ is numerically trivial on $\widetilde{F}$. 

By Lemma \ref{lem:lehmann} there exists a birational morphism $\xi_Z\colon \widehat Z\to Z$ from a smooth projective variety $\widehat Z$, a nef $\Q$-divisor $L_{\widehat Z}$ on $\widehat Z$, a smooth projective variety $\widehat X$ and a commutative diagram
\[
\xymatrix{ 
\widehat X \ar[d]_{\widehat\xi} \ar[r]^{\widehat\zeta} & \widehat Z \ar[d]^{\xi_Z}\\
\widetilde X\ar[r]_{\widetilde \zeta} & Z
}
\]
such that, if we denote $\widehat L:=\widehat\xi^*\widetilde L$, then
$$\widehat L\equiv \widehat \zeta^*L_{\widehat Z}.$$

We may write
$$K_{\widehat X}+\widehat\Delta\sim_\Q\widehat\xi^*\big(K_{\widetilde{X}}+\widetilde \Delta\big)+\widehat E,$$
where $\widehat\Delta$ and $\widehat E$ are effective $\Q$-divisors on $\widehat X$ without common components; note that $K_{\widehat{X}}+\widehat{\Delta}$ is pseudoeffective. This last relation above implies
\begin{equation}\label{eq:009b}
K_{\widehat X}+\widehat\Delta+\widehat{L}\sim_\Q \widehat\xi^*\big(K_{\widetilde{X}}+\widetilde \Delta+\widetilde L\big)+\widehat E.
\end{equation}
Furthermore, if $\widehat F$ is a general fibre of $\widehat\zeta$, then $\nu\big(\widehat F,(K_{\widehat X}+\widehat\Delta)|_{\widehat F}\big)=0$ by Lemma \ref{lem:1}. 

Assume that there exists there exists a birational morphism $\widehat w\colon W_3\to \widehat X$ from a smooth projective variety $W_3$ such that $P_\sigma\big(\widehat w^*\big(K_{\widehat X}+\widehat\Delta+\widehat{L}\big)\big)$ is num-semiample. Set $w_3:=\widetilde\xi\circ\widehat{\xi}\circ\widehat{w}\colon W_3\to X$. By \eqref{eq:009a} and \eqref{eq:009b} we have
$$\widehat w^*\big(K_{\widehat X}+\widehat\Delta+\widehat{L}\big)\sim_\Q w_3^*(K_X+\Delta+L)+\widehat{w}^*\widehat{\xi}^*\widetilde E+\widehat{w}^*\widehat{E},$$
and since the effective $\Q$-divisor $\widehat{w}^*\widehat{\xi}^*\widetilde E+\widehat{w}^*\widehat{E}$ is $w_3$-exceptional, by Lemma \ref{lem:Nakayama1} we obtain
$$P_\sigma\big(\widehat w^*\big(K_{\widehat X}+\widehat\Delta+\widehat{L}\big)\big)\sim_\Q P_\sigma\big(w_3^*(K_X+\Delta+L)\big).$$
Thus, $P_\sigma\big(w_3^*(K_X+\Delta+L)\big)$ is num-semiample.

Therefore, by replacing $X$ by $\widehat X$, $\Delta$ by $\widehat\Delta$, $L$ by $\widehat L$, and $\zeta$ by $\widehat{\zeta}$, we achieve Assumption 2.

\medskip

\emph{Step 4.}
In this step we show that we may assume the following, which strengthens Assumption 2:

\medskip

\emph{Assumption 3.} There exists a morphism $\zeta\colon X\to Z$ to a smooth projective variety $Z$ with $\dim Z<\dim X$ and a nef $\Q$-divisor $L_Z$ on $Z$ such that $L\equiv \zeta^*L_Z$, the divisor $K_X+\Delta$ is $\zeta$-semiample, and $(K_X+\Delta)|_F\sim_\Q0$ for a general fibre $F$ of $\zeta$.

\medskip

To this end, since $\nu\big(X,(K_X+\Delta)|_F\big)=0$ for a general fibre $F$ of $\zeta$ by Assumption 2, the pair $(F,\Delta|_F)$ has a good model by \cite[Corollaire 3.4]{Dru11} and \cite[Corollary V.4.9]{Nak04}. Therefore, by \cite[Theorem 2.12]{HX13} we may run any relative $(K_X+\Delta)$-MMP $\theta\colon X\dashrightarrow X_{\min}$ with scaling of an ample divisor over $Z$, which terminates with a relative good model $X_{\min}$ of $(X,\Delta)$ over $Z$. Note that this MMP is $L$-trivial by Assumption 2. Let $\zeta_{\min}\colon X_{\min}\to Z$ be the induced morphism. Denote $\Delta_{\min}=\theta_*\Delta$ and $L_{\min}:=\theta_*L$, and note that $K_{X_{\min}}+\Delta_{\min}$ is pseudoeffective.
\[
\xymatrix{ 
X \ar@{-->}[rr]^\theta \ar[rd]_\zeta & & X_{\min} \ar[dl]^{\zeta_{\min}}\\
& Z & 
}
\]
Assume that there exists a birational morphism $v_{\min}\colon V_{\min}\to Y$ from a smooth projective variety $V_{\min}$ such that $P_\sigma\big(v_{\min}^*(K_{X_{\min}}+\Delta_{\min}+L_{\min})\big)$ is num-semiample. Then analogously as in Step 1 we show that there exists a birational morphism $w_4\colon W_4\to X$ from a smooth projective variety $W_4$ such that $P_\sigma\big(w_4^*(K_X+\Delta+L)\big)$ is num-semiample.

Therefore, by replacing $X$ by $X_{\min}$, $\Delta$ by $\Delta_{\min}$, and $L$ by $L_{\min}$, we achieve Assumption 3.

\medskip

\emph{Step 5.} 
We are now in the position to finish the proof of Theorem \ref{thm:semiampleP_sigma}. Let $\tau\colon X\to T$ be the relative Iitaka fibration over $Z$ associated to $K_X+\Delta$. Then $\dim T<n$ since $(K_X+\Delta)|_F\sim_\Q0$ by Assumption 3. 

If $\zeta_T\colon T\to Z$ is the induced morphism, there exists a $\zeta_T$-ample $\Q$-divisor $A$ on $T$ such that $K_X+\Delta\sim_\Q\tau^*A$. 
\[
\xymatrix{ 
X \ar[d]_\zeta \ar[r]^\tau & T\ar[dl]^(0.35){\zeta_T} \\
Z & 
}
\]
By \cite[Theorem 0.2]{Amb05a} there exists an effective $\Q$-divisor $\Delta_T$ on $T$ such that the pair $(T,\Delta_T)$ is klt and such that  $$K_X+\Delta\sim_\Q\tau^*(K_T+\Delta_T),$$
in particular, $K_T+\Delta_T$ is pseudoeffective. If we denote $L_T:=\zeta_T^*L_Z$, then by Assumption 3 we have
\begin{equation}\label{eq:induction}
K_X+\Delta+L\equiv \tau^*(K_T+\Delta_T+L_T).
\end{equation}
By induction on the dimension, there exists a smooth projective variety $W_T$ and a birational morphism $w_T\colon W_T\to T$ such that $P_\sigma\big(w_T^*(K_T+\Delta_T+L_T)\big)$ is num-semiample. 

Let $W$ a desingularisation of the main component of the fibre product $X\times_T W_T$ and consider the induced commutative diagram
\begin{equation}\label{dia1}
\begin{gathered} 
\xymatrix{ 
W \ar[r]^{\tau_W}\ar[d]_w & W_T \ar[d]^{w_T} \\
X \ar[r]_\tau & T.
}
\end{gathered}
\end{equation}
Then
\begin{align*}
P_\sigma\big(w^*(K_X+\Delta+L)\big)&\equiv P_\sigma\big(w^*\tau^*(K_T+\Delta_T+L_T)\big) & \text{by \eqref{eq:induction}}\\
&=P_\sigma\big(\tau_W^*w_T^*(K_T+\Delta_T+L_T)\big) & \text{by \eqref{dia1}}\\
&=\tau_W^*P_\sigma\big(w_T^*(K_T+\Delta_T+L_T)\big) & \text{by Lemma \ref{lem:Nakayama2}}
\end{align*}
and therefore, $P_\sigma\big(w^*(K_X+\Delta+L)\big)$ is num-semiample. This finishes the proof. 
\end{proof}

\begin{rem} \label{rem:im} 
Notice that the Semiampleness Conjecture was used only in Step 2 of the proof of Theorem \ref{thm:semiampleP_sigma}, and only in the case $\nu(X,K_X+\Delta)=0$. Therefore, if $\nu(X,K_X + \Delta) > 0$ in Theorem  \ref{thm:semiampleP_sigma}, we only need to assume the termination of klt flips in dimension $n$, the Abundance Conjecture in dimension at most $n$, and the Generalised Nonvanishing Conjecture in dimension at most $n-1$.
\end{rem}

We are now able to deduce our main results immediately.

\begin{proof}[Proof of Theorem \ref{main_theorem1}]
By induction on the dimension, the Generalised Nonvanishing Conjecture in dimension $n-1$ is implied by the termination of klt flips in dimensions at most $n-1$, the Abundance Conjecture in dimensions at most $n-1$ and additionally  the Semiampleness Conjecture in dimensions at most $n-1$ in the case of Calabi-Yau pairs, see Remark \ref{rem:im}.

Therefore, Theorem \ref{thm:semiampleP_sigma} and the assumptions of Theorem \ref{main_theorem1} show that there exists a birational morphism $w\colon W\to X$ from a smooth projective variety $W$ such that $P_\sigma\big(w^*(K_X+\Delta+L)\big)$ is a num-semiample $\Q$-divisor. Since $N_\sigma\big(w^*(K_X+\Delta+L)\big)\geq0$, the divisor $w^*(K_X+\Delta+L)$ is num-effective, and hence $K_X+\Delta+L$ is num-effective by Lemma \ref{lem:descenteff1}. 
\end{proof}

\begin{proof}[Proof of Theorem \ref{main_theorem2}]
As in the previous proof, Theorem \ref{thm:semiampleP_sigma}, Remark \ref{rem:im} and the assumptions of Theorem \ref{main_theorem2} show that there exists a birational morphism $w\colon W\to X$ from a smooth projective variety $W$ 
such that $$P_\sigma\big(w^*(K_X+\Delta+L)\big)$$ is a num-semiample $\Q$-divisor. Since $K_X+\Delta+L$ is nef, we have $$P_\sigma\big(w^*(K_X+\Delta+L)\big)=w^*(K_X+\Delta+L),$$ hence $K_X+\Delta+L$ is num-semiample by Lemma \ref{lem:descenteff1}.
\end{proof}

Note that Corollary \ref{cor:dim3} is an immediate consequence of Theorems \ref{main_theorem1} and \ref{main_theorem2}, since the assumptions in Theorems \ref{main_theorem1} and \ref{main_theorem2} are 
satisfied in dimension $3$. 

We conclude this section with the following pleasant corollary of Theorem \ref{thm:semiampleP_sigma}.

\begin{cor}\label{cor:fingen}
Assume the termination of klt flips in dimensions at most $n$, the Abundance Conjecture in dimensions at most $n$, and the Semiampleness Conjecture in dimensions at most $n$. 

Let $(X,\Delta)$ be an $n$-dimensional projective klt pair such that $K_X+\Delta$ is pseudoeffective and let $L$ be a nef Cartier divisor on $X$. Then there exists an effective $\Q$-divisor $D\equiv K_X+\Delta+L$ such that the section ring 
$$R(X,D)=\bigoplus_{k\in\N}H^0(X,kD)$$
is finitely generated. 
\end{cor}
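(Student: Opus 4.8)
The plan is to apply Theorem \ref{thm:semiampleP_sigma} to obtain a birational morphism $w\colon W\to X$ from a smooth projective variety $W$ such that $P_\sigma\big(w^*(K_X+\Delta+L)\big)$ is num-semiample. I would then attempt to descend a finitely generated section ring from $W$ back to $X$. The first step is to use the Nakayama--Zariski decomposition: write $w^*(K_X+\Delta+L)=P_\sigma\big(w^*(K_X+\Delta+L)\big)+N_\sigma\big(w^*(K_X+\Delta+L)\big)$, with $N_\sigma\geq0$. The key observation is that the section ring is insensitive, up to finite generation, to adding the negative part of the Nakayama--Zariski decomposition (cf.\ \cite[Chapter III]{Nak04}); more precisely, the natural inclusions $H^0(W,kP_\sigma)\hookrightarrow H^0(W,k\,w^*(K_X+\Delta+L))$ are isomorphisms for $k$ sufficiently divisible, because the negative part is a fixed divisor.

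Next, since $P_\sigma\big(w^*(K_X+\Delta+L)\big)$ is num-semiample, there exists a semiample $\Q$-divisor $M'$ on $W$ with $P_\sigma\big(w^*(K_X+\Delta+L)\big)\equiv M'$. A semiample divisor has a finitely generated section ring by the basepoint-free theorem (its multiples define a morphism to a projective variety and the section ring is a pullback of the homogeneous coordinate ring of an ample bundle). I would then push everything forward: set $D:=w_*\big(M'+N_\sigma(w^*(K_X+\Delta+L))\big)$, or more carefully, apply Lemma \ref{lem:descenteff1} to the birational morphism $w$ — noting $X$ and $W$ have rational singularities ($X$ is klt, $W$ is smooth) — to get an effective $\Q$-divisor $D\equiv K_X+\Delta+L$ on $X$ coming from the semiample part. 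The point is that $w^*D$ and $w^*(K_X+\Delta+L)$ differ by a $w$-exceptional divisor plus something numerically trivial, so by the projection formula $H^0(X,kD)\cong H^0(W, k\,w^*D)$ and the latter ring is finitely generated.

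The main obstacle will be bookkeeping the numerical-versus-linear equivalence carefully. Since Theorem \ref{thm:semiampleP_sigma} only gives num-semiampleness (a semiample divisor numerically equivalent to $P_\sigma$, not $\Q$-linearly equivalent), and since finite generation of $R(X,D)$ depends on the actual $\Q$-linear equivalence class of $D$, I need to choose $D$ to be precisely an effective representative whose pullback equals (up to $w$-exceptional effective divisors) a semiample divisor — not merely one numerically equivalent to it. Concretely: replace $P_\sigma\big(w^*(K_X+\Delta+L)\big)$ by the genuinely semiample $M'$ in its numerical class, set $\widetilde D := M' + N_\sigma\big(w^*(K_X+\Delta+L)\big)$ on $W$, observe $\widetilde D$ is effective with a finitely generated section ring (semiample part plus fixed part), and then take $D:=w_*\widetilde D$; one checks $w^*D=\widetilde D + (\text{effective }w\text{-exceptional})$ via negativity, so $R(X,D)\cong R(W,\widetilde D)$ is finitely generated, and $D\equiv w_*w^*(K_X+\Delta+L)=K_X+\Delta+L$. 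The remaining care is to ensure $w_*\widetilde D$ is still $\Q$-Cartier and effective, which follows since $X$ is $\Q$-factorial after a small modification or directly from $\widetilde D$ being the pullback structure, but this is routine.
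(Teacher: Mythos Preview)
Your proposal is correct and follows essentially the same approach as the paper: apply Theorem \ref{thm:semiampleP_sigma}, replace $P_\sigma$ by a genuinely semiample $\Q$-divisor $M'$ in its numerical class, set $\widetilde D:=M'+N_\sigma$ on $W$, push forward to $D:=w_*\widetilde D$, and identify $R(X,D)$ with $R(W,\widetilde D)\simeq R(W,M')$. One small sharpening: since $w^*D\equiv\widetilde D$ and $w_*(w^*D-\widetilde D)=0$, the Negativity lemma applied in both directions gives $w^*D=\widetilde D$ exactly (no extra exceptional divisor), which makes the identification $R(X,D)\simeq R(W,\widetilde D)$ immediate and also disposes of the $\Q$-Cartier worry via Lemma \ref{lem:descenteff1}.
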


\begin{proof}
As in the proof of Theorem \ref{main_theorem1}, the assumptions of Corollary \ref{cor:fingen} and Theorem \ref{thm:semiampleP_sigma} show that there exists a birational morphism $w\colon W\to X$ from a smooth projective variety $W$ and a semiample $\Q$-divisor 
$$P_W\equiv P_\sigma\big(w^*(K_X+\Delta+L)\big).$$
Denote $D_W:=P_W+N_\sigma\big(w^*(K_X+\Delta+L)\big)$. Then $D_W\equiv w^*(K_X+\Delta+L)$ and by the proof of \cite[Theorem 5.5]{Bou04} we have
$$R(W,D_W)\simeq R(W,P_W).$$
Therefore, $R(W,D_W)$ is finitely generated. Denote
$$D:=w_*D_W.$$
Then $D$ is a $\Q$-Cartier $\Q$-divisor such that $D\equiv K_X+\Delta+L$ by Lemma \ref{lem:descenteff1}, and since $w^*D\equiv D_W$, we obtain $w^*D=D_W$ by the Negativity lemma \cite[Lemma 3.39]{KM98}. But then the ring $R(X,D)\simeq R(W,D_W)$ is finitely generated.
\end{proof}

\section{Counterexamples and applications}\label{AppB}

\subsection{Counterexamples}

In this subsection we collect several examples to demonstrate that several of the assumptions in the Generalised Nonvanishing and the Generalised Abundance Conjectures cannot be removed.

\begin{exa}
The following classical example demonstrates that the Generalised Abundance Conjecture does not hold if one assumes that the pair $(X,\Delta)$ is log canonical but not klt.

Let $C \subseteq \PS^2$ be a smooth elliptic curve and pick not necessarily distinct points $P_1, \ldots, P_9 \in C$. Let $\pi\colon X \to \PS^2$ be the blow up of these $9$ points. Then ${-}K_X$ is nef with $K_X^2 = 0$ and the strict transform $C'$ of $C$ in $X$ belongs to the linear system $|{-}K_X|$. Hence, $(X,C')$ is log canonical. It is known that for certain choices of points $P_1, \ldots, P_9$, we have $h^0(X,{-}mK_X)=1$ for all $m\geq0$, and therefore ${-}K_X$ is not semiample; see \cite[Example 2]{DPS96} for details. Setting $\Delta:=C'$ and $L := {-}K_X$ shows that the Generalised Abundance Conjecture fails in the log canonical setting.
\end{exa}

It remains open whether the Generalised Nonvanishing Conjecture holds for log canonical pairs. Furthermore, it is unclear whether the assumption of pseudoeffectivity of $K_X+\Delta$ can be removed from the Generalised Nonvanishing Conjecture, cf.\ \cite[Question 3.5]{BH14}. The following example shows, however, that the pseudoeffectivity is necessary in the formulation of the Generalised Abundance Conjecture.

\begin{exa}
This example is \cite[Example 1.1]{Sho00}, see also \cite[\S2.5]{DPS01}. It shows that the Generalised Abundance Conjecture fails if $K_X+\Delta$ is not pseudoeffective.

Let $E$ be a smooth elliptic curve and let $\mathcal E$ a rank $2$ vector bundle on $E$ which is the unique non-split extension
$$0\to\OO_E\to\mathcal{E}\to\OO_E\to0.$$
Set $X:=\PS(\mathcal{E})$ and let $\pi\colon X\to E$ be the projection. The surface $X$ contains a unique section $C$ of $\pi$ such that $C^2=0$, $C$ is nef, and $K_X+2C\sim0$. Moreover, $C$ is the unique effective divisor in the numerical class of $C$. In particular, if we set $\Delta:=\frac12C$ and $L:=2C$, then $(X,\Delta)$ is klt, $K_X+\Delta$ is not pseudoeffective, $K_X+\Delta+L$ is nef but not num-semiample.
\end{exa}

\begin{rem}
Let $(X,\Delta)$ be a klt pair, where $\Delta$ is an $\R$-divisor such that $K_X+\Delta$ is pseudoeffective and $\nu(X,K_X+\Delta)\geq1$. If $L$ is a nef $\R$-divisor, then (assuming the conjectures of the MMP) the proofs in this paper show that $K_X+\Delta+L$ is num-effective; and similarly if $K_X+\Delta+L$ is additionally nef, then $K_X+\Delta+L$ is num-semiample. 

This is, however, false if $\nu(X,K_X+\Delta)=0$. Indeed, \cite{FT17} gives an example of a projective K3 surface $X$ and a nef $\R$-divisor $L$ on $X$ such that $L$ is not semipositive, hence not semiample (in a suitable sense).
\end{rem}

\subsection{Generalised pairs}\label{subsec:genpairs}

We make a remark on generalised polarised pairs in the sense of \cite{BZ16}. Recall that a \emph{generalised (polarised) pair} consists of a pair $(X,\Delta)$, an $\R$-divisor $L$ on $X$, a log resolution $f\colon X'\to X$ of $(X,\Delta)$ and a nef $\R$-divisor $L'$ such that $L=f_*L'$. We also write the generalised pair as $(X,\Delta+L)$. If we write $K_{X'}+\Delta'+L'\sim_\Q f^*(K_X+\Delta+L)$, and if all coefficients of $\Delta'$ are smaller than $1$, then the generalised pair $(X,\Delta+L)$ is said to have klt singularities.

One can then formulate the Generalised Nonvanishing Conjecture for generalised polarised pairs: if $(X,\Delta+L)$ is a klt generalised polarised pair such that $K_X+\Delta$ is pseudoeffective, then $K_X+\Delta+L$ is num-effective. However, this is a trivial consequence of the Generalised Nonvanishing Conjecture from the introduction. Indeed, in the notation above, first note that by the Negativity lemma \cite[Lemma 3.39]{KM98} we may write $f^*L=L'+E$, where $E$ is effective and $f$-exceptional. We may also write $\Delta'=\Delta^+-\Delta^-$, where $\Delta^+$ and $\Delta^-$ are effective and have no common components. Since $f_*\Delta'=\Delta\geq0$, the divisor $\Delta^-$ is $f$-exceptional. Thus
$$K_{X'}+\Delta^+\sim_\Q f^*(K_X+\Delta)+\Delta^-+E,$$
hence $K_{X'}+\Delta^+$ is pseudoeffective and $(X',\Delta^+)$ is a klt pair. Then the Generalised Nonvanishing Conjecture implies that $K_{X'}+\Delta^++L'$ is num-effective. Since $f_*(K_{X'}+\Delta^++L')=K_X+\Delta+L$, the claim follows.

\subsection{Serrano's Conjecture}

Recall that a Cartier divisor $L$ on a projective variety $X$ is \emph{strictly nef} if $L\cdot C>0$ for every curve $C$ on $X$. The importance of strictly nef divisors stems from the following important conjecture of Serrano, see \cite{Ser95} and \cite[Conjecture 0.1]{CCP08}.

\begin{con}\label{con:serrano}
Let $X$ be a projective manifold of dimension $n$ and let $L$ be a strictly nef Cartier divisor on $X$. Then $K_X+tL$ is ample for every real number $t>n+1$.
\end{con}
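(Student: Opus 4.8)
The plan is to derive Conjecture \ref{con:serrano} --- at least when $K_X$ is pseudoeffective --- directly from the Generalised Abundance statement of Theorem \ref{main_theorem2}, and to reduce the remaining case to this one via the Minimal Model Program.

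The first step is purely formal: \emph{$K_X+tL$ is strictly nef for every real number $t>n+1$}. Indeed, by the Cone Theorem, together with the length estimate $-K_X\cdot\Gamma\le n+1$ valid for extremal rational curves on a smooth variety, every $K_X$-negative extremal ray of $\NEb(X)$ is spanned by a rational curve $\Gamma$ with $-K_X\cdot\Gamma\le n+1$; since $L$ is strictly nef and Cartier we have $L\cdot\Gamma\ge1$, whence $(K_X+tL)\cdot\Gamma\ge t-(n+1)>0$. As $L$ is nef, $K_X+tL$ is non-negative on the $K_X$-non-negative part of $\NEb(X)$ as well, so $K_X+tL$ is nef. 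Arguing now exactly as in the proof of Lemma \ref{lem:numtrivial} --- with $K_X+tL$ already known to be nef, so that every extremal class occurring in the decomposition of a $(K_X+tL)$-trivial curve class is itself $(K_X+tL)$-trivial, hence not $K_X$-negative --- one sees that $(K_X+tL)\cdot C=0$ forces $K_X\cdot C\ge0$ and therefore $L\cdot C\le0$, contradicting strict nefness of $L$. Thus $(K_X+tL)\cdot C>0$ for every irreducible curve $C$.

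Since ampleness is preserved under adding a nef divisor, it suffices to prove that $K_X+tL$ is ample for one fixed rational $t>n+1$; for such $t$ the divisor $tL$ is a nef $\Q$-divisor. Assume that $K_X$ is pseudoeffective. Applying Theorem \ref{main_theorem2} to the klt pair $(X,0)$ and the nef $\Q$-divisor $tL$ --- whose sum with $K_X$ is nef by the previous step --- yields a semiample $\Q$-divisor $M$ with $K_X+tL\equiv M$. For $m$ sufficiently divisible, $|mM|$ is basepoint free and defines a morphism $\varphi\colon X\to Y$; a curve $C$ is contracted by $\varphi$ precisely when $M\cdot C=0$, but $M\equiv K_X+tL$ is strictly nef, so $\varphi$ contracts no curve and is therefore finite. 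Hence $M$, and with it the numerically equivalent divisor $K_X+tL$, is ample. As in Theorem \ref{main_theorem2}, this uses only the termination of flips and the Abundance Conjecture, except when $\nu(X,K_X)=0$, in which case the Semiampleness Conjecture is also invoked.

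The remaining case, $K_X$ not pseudoeffective --- equivalently, $X$ uniruled --- is where the main obstacle lies: Theorem \ref{main_theorem2} is unavailable, and Mumford's example of a strictly nef line bundle with vanishing self-intersection on a ruled surface over an elliptic curve shows that no argument using merely the nefness and strict nefness of $K_X+tL$ can conclude, so the adjoint nature of the divisor must be exploited. The natural plan is induction on $\dim X$: pass to an MRC-type fibration $X\dashrightarrow W$ with $K_W$ pseudoeffective and $\dim W<n$ (or, equivalently, run a $K_X$-MMP to a Mori fibre space), deduce the statement on the rationally connected general fibres, and combine it with the pseudoeffective case applied over $W$. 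The difficulty is twofold: every step of such an MMP is $(K_X+tL)$-\emph{positive}, so the strict transform of $K_X+tL$ need not remain nef and must instead be tracked through the fibration; and the key input, the strict nefness (equivalently, bigness) of the restriction of $tL$ to rationally connected fibres, is essentially an instance of the Campana--Peternell problem and is not provided by the methods of this paper.
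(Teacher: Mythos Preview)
The statement is a \emph{conjecture}; the paper does not prove it. What the paper does prove is the lemma immediately following Conjecture~\ref{con:serrano}: assuming termination of klt flips and the Generalised Abundance Conjecture in dimensions at most $n$, Serrano's conjecture holds for non-uniruled $X$. Your proposal is in effect a proof of that same lemma, together with an (accurate) discussion of why the uniruled case remains out of reach.

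For the pseudoeffective case your argument and the paper's are essentially the same. Both establish strict nefness of $K_X+tL$ via the Cone Theorem and the length bound on $K_X$-negative extremal rays, and both then need to upgrade strict nefness to ampleness using the adjoint structure. The only difference is in this last step: you invoke Theorem~\ref{main_theorem2} to get $K_X+tL\equiv M$ num-semiample, and argue that the associated morphism contracts no curve; the paper instead observes that $n(X,K_X+tL)=n$, applies Lemma~\ref{lem:maxnefdimBig} to conclude $K_X+tL$ is big, and then cites \cite[Corollary~1.6]{CCP08} to pass from strictly nef with positive top self-intersection to ample. Your route is self-contained within the paper; the paper's route invokes a slightly weaker intermediate statement (bigness rather than num-semiampleness) but then needs the external reference to finish. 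The hypotheses are also packaged differently: the paper's lemma assumes the Generalised Abundance Conjecture directly, whereas you assume the hypotheses of Theorem~\ref{main_theorem2} (termination, Abundance, and Semiampleness when $\nu(X,K_X)=0$).

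Your final paragraph on the uniruled case is a fair summary of the obstruction; the paper simply does not address that case.
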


The following result shows that this conjecture follows from the MMP and the Generalised Abundance Conjecture when $X$ is not uniruled.

\begin{lem}
Assume the termination of klt flips in dimensions at most $n$ and the Generalised Abundance Conjecture in dimension at most $n$. Let $X$ be a non-uniruled projective manifold of dimension $n$ and let $L$ be a strictly nef Cartier divisor on $X$. Then $K_X+tL$ is ample for every real number $t>n+1$.
\end{lem}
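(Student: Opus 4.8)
The plan is to first show that $K_X+tL$ is nef for every real $t>n+1$ by a direct application of the Cone theorem, and then to upgrade nefness to ampleness by exploiting the \emph{strict} nefness of $L$ together with the Generalised Abundance Conjecture. Since $X$ is not uniruled, $K_X$ is pseudoeffective by \cite{BDPP}, so $(X,0)$ is a klt pair with $K_X$ pseudoeffective and the Generalised Abundance Conjecture applies to it. For the nefness step I would invoke Mori's Cone theorem in the form $\NEb(X)=\NEb(X)_{K_X\geq0}+\sum_j\R_{\geq0}[\Gamma_j]$, where the $\Gamma_j$ are rational curves with $0<-K_X\cdot\Gamma_j\leq n+1$. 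On $\NEb(X)_{K_X\geq0}$ the class $K_X+tL$ is nonnegative because $L$ is nef, and for each $j$ we have $L\cdot\Gamma_j\geq1$ (as $L$ is strictly nef and Cartier), so $(K_X+tL)\cdot\Gamma_j\geq-(n+1)+t>0$ once $t>n+1$; hence $K_X+tL$ is nef for all real $t>n+1$. Alternatively, nefness of $K_X+mL$ for integers $m>2n$ follows from Proposition \ref{pro:contocon}, the relevant $L$-trivial $(K_X)$-MMP being trivial because $L$ admits no trivial curve; this is the point at which the termination of flips enters.

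Next I would fix an integer $m\geq n+2$ and apply the Generalised Abundance Conjecture to $K_X+mL$ (nef by the previous step, with $mL$ nef and Cartier) to obtain a semiample $\Q$-divisor $M$ with $K_X+mL\equiv M$. If $M$ were not ample, the morphism it defines would contract some curve $C$, whence $(K_X+mL)\cdot C=0$; since $L$ is strictly nef, $L\cdot C>0$, so $K_X\cdot C<0$. Then the exposed face $F:=(K_X+mL)^{\perp}\cap\NEb(X)$ is nonzero, and because $\NEb(X)$ is a pointed closed convex cone not every extremal ray of $F$ can be $K_X$-nonnegative; an extremal ray of $F$ on which $K_X$ is negative is an extremal ray of $\NEb(X)$, hence spanned by a rational curve $\Gamma$ with $-K_X\cdot\Gamma\leq n+1$ and $(K_X+mL)\cdot\Gamma=0$, which forces $m\leq (n+1)/(L\cdot\Gamma)\leq n+1$, a contradiction. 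Thus $M$, and therefore $K_X+mL$, is ample for every integer $m\geq n+2$; in particular $K_X^{\perp}\cap L^{\perp}\cap\NEb(X)=\{0\}$.

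Finally, for a real number $t>n+1$ the divisor $D:=K_X+tL$ is nef, so if it were not ample the exposed face $F:=D^{\perp}\cap\NEb(X)$ would be nonzero; applying the same dichotomy to its extremal rays, a $K_X$-negative one would again give (via the length bound) $t\leq n+1$, impossible, while on a $K_X$-nonnegative extremal ray $R$ the equality $D\cdot R=0$ forces $K_X\cdot R=L\cdot R=0$. Hence $F\subseteq K_X^{\perp}\cap L^{\perp}\cap\NEb(X)=\{0\}$, contradicting $F\neq\{0\}$, so $K_X+tL$ is ample. I expect the middle step to be the main obstacle: it is exactly there that one must convert the merely numerical and merely semiample output of the Generalised Abundance Conjecture into genuine ampleness, and this requires combining the strict nefness of $L$ (to exclude contracted curves and extremal rays of $L$-length $0$) with the sharp bound $n+1$ on the lengths of $K_X$-negative extremal rays; the first and third steps are then essentially formal.
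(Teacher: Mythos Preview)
Your proof is correct and takes a genuinely different route from the paper's.

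The paper argues in three steps: (1) it shows $K_X+tL$ is \emph{strictly} nef for $t>n+1$ by decomposing an arbitrary curve class into extremal classes, so that $n(X,K_X+tL)=n$; (2) it invokes the paper's own Lemma~\ref{lem:maxnefdimBig} (whose hypotheses are supplied via Corollary~\ref{cor:genAbundancereduction1}) to deduce that $K_X+tL$ is big; (3) it then cites \cite[Corollary~1.6]{CCP08} to conclude ampleness from strict nefness together with $(K_X+tL)^n>0$.

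Your argument bypasses both the paper's machinery and the external reference. You go directly from nefness of $K_X+mL$ (for an integer $m\geq n+2$) to num-semiampleness via the Generalised Abundance Conjecture, and then rule out any contracted curve by the same Cone-theoretic length bound used in the nefness step; finally you bootstrap to all real $t>n+1$ by observing that the integer case forces $K_X^{\perp}\cap L^{\perp}\cap\NEb(X)=\{0\}$. This is more elementary and self-contained: it uses neither Lemma~\ref{lem:maxnefdimBig} nor \cite{CCP08}, and in fact your main line of argument does not use the termination of flips at all (only the Cone theorem and the assumed Generalised Abundance). The one place where your write-up could be tightened is the sentence ``not every extremal ray of $F$ can be $K_X$-nonnegative'': the cleanest justification is that $[C]\in F$ with $K_X\cdot C<0$ decomposes (by Minkowski--Carath\'eodory, since $F$ is a closed pointed cone in finite dimension) as a finite positive combination of extremal rays of $F$, at least one of which must then be $K_X$-negative; and since $F$ is a face of $\NEb(X)$, that ray is extremal in $\NEb(X)$. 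With that made explicit, the argument is complete.
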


\begin{proof}
Fix a real number $t>n+1$. We claim first that the divisor $K_X+tL$ is strictly nef, and in particular, $n(X,K_X+tL)=n$ for any real number $t>n+1$. 

Indeed, let $C$ be any curve on $X$. Then there exist finitely many extremal classes $\gamma_i$ in the cone $\NEb(X)$ and positive real numbers $\lambda_i$ such that $[C]=\sum\lambda_i\gamma_i$, and it suffices to show that $(K_X+tL)\cdot \gamma_i>0$ for all $i$. If $K_X\cdot\gamma_i\geq0$, then immediately $(K_X+tL)\cdot \gamma_i>0$. If $K_X\cdot\gamma_i<0$, then by the boundedness of extremal rays \cite[Theorem 1.5]{Mor82} there is a curve $C_i$ on $X$ whose class is proportional to $\gamma_i$ such that $0<-K_X\cdot C_i\leq n+1$. Since $L$ is a Cartier divisor, we have $L\cdot C_i\geq1$, and hence $(K_X+tL)\cdot C_i>0$. This implies the claim.

The divisor $K_X$ is pseudoeffective by \cite[Corollary 0.3]{BDPP}. Then the assumptions of the theorem, together with Corollary \ref{cor:genAbundancereduction1} applied in dimension $n-1$, show that the divisor $K_X+tL$ is big by Lemma \ref{lem:maxnefdimBig} for every $t>n+1$. Therefore, we have $(K_X+tL)^n>0$, but then $K_X+tL$ is ample by \cite[Corollary 1.6]{CCP08} for every $t>n+1$.
\end{proof}

\section{Around the Semiampleness Conjecture}\label{sec:semiampleness}

The goal of this section is to reduce the Semiampleness Conjecture to the following substantially weaker version.

\begin{SemConVar}
Let $X$ be a projective variety with terminal singularities such that $K_X\sim 0 $. Let $L$ be a nef Cartier divisor on $X$.
Then there exists a semiample $\Q$-divisor $L'$ such that $L \equiv L'$.
\end{SemConVar}

We need the following result.

\begin{lem}\label{lem:quasietale}
Let $X$ be a projective variety with terminal singularities such that $K_X\equiv0$. Then there exists a quasi-\'etale cover $\pi\colon X'\to X$ such that $X'$ has terminal singularities and $K_{X'}\sim0$.
\end{lem}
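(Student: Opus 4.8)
The plan is to show that $K_X$ is torsion in $\Cl(X)$ and then to take the associated index-one (global cyclic) cover. First I would note that, since $X$ is terminal and hence klt and $K_X\equiv 0$, the divisor $K_X$ is nef with $\nu(X,K_X)=0$; by Nakayama's abundance theorem for varieties of numerical dimension zero \cite[Corollary V.4.9]{Nak04}, this forces $\kappa(X,K_X)=0$. Consequently $pK_X$ is linearly equivalent to an effective Weil divisor for some $p>0$, and an effective divisor on a projective variety which is numerically equivalent to $0$ must itself be $0$; hence $pK_X\sim 0$. Let $m$ denote the order of the class of $K_X$ in $\Cl(X)$, so $m$ is the smallest positive integer with $mK_X\sim 0$.

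Next I would fix an isomorphism $\OO_X(mK_X)\cong\OO_X$ and let $\pi\colon X'\to X$ be the associated global index-one cover of degree $m$, which is an irreducible (by minimality of $m$) normal projective variety. The key local observation is that over the open set $U\subseteq X$ on which $\OO_X(K_X)$ is invertible — whose complement has codimension at least $2$, as $X$ is normal and $\Q$-Gorenstein — the morphism $\pi$ restricts to the standard cyclic cover attached to the torsion line bundle $\OO_X(K_X)|_U$, and since the section of $\OO_X(mK_X)|_U$ defining it is nowhere vanishing, $\pi$ is étale over $U$. Therefore $\pi$ is quasi-\'etale; in particular there is no ramification in codimension one, so $K_{X'}=\pi^*K_X$, and by construction of the cyclic cover $\OO_{X'}(K_{X'})\cong\pi^*\OO_X(K_X)\cong\OO_{X'}$, whence $K_{X'}\sim 0$.

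Finally, since $\pi$ is quasi-\'etale and $K_{X'}=\pi^*K_X$, the relevant discrepancies are preserved and $X'$ inherits terminal singularities from $X$ by \cite[Proposition 5.20]{KM98}. The result is classical and, given Nakayama's theorem, essentially immediate; the only step needing genuine input is the passage from $K_X\equiv 0$ to $K_X$ torsion, which is exactly what the numerical-dimension-zero abundance theorem supplies, while the verification that the index-one cover is again terminal with trivial canonical class is routine.
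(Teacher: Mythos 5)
Your proof is correct and takes essentially the same route as the paper: reduce $K_X\equiv 0$ to $K_X$ torsion (you via Nakayama's numerical-dimension-zero abundance, the paper via Kawamata's \cite[Theorem 8.2]{Kaw85b}), take the degree-$m$ cyclic index-one cover, observe it is quasi-\'etale with $K_{X'}\sim 0$, and transfer terminality via the discrepancy formula for finite covers (you cite \cite[Proposition 5.20]{KM98}, the paper cites \cite[Proposition 2.14(i)]{DL15}). The arguments agree in substance; the differing references supply the same ramification-index inequality $a(E',X',0)+1\ge r\bigl(a(E,X,0)+1\bigr)$ together with the observation that a valuation exceptional over $X'$ maps (under the finite $\pi$) to one exceptional over $X$, which is exactly what is needed to conclude terminality of $X'$.
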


\begin{proof}
By \cite[Theorem 8.2]{Kaw85b} we have $K_X\sim_\Q0$. Let $m$ be the smallest positive integer such that $mK_X\sim0$ and let $\pi\colon X'\to X$ be the corresponding $m$-fold cyclic covering. Then $\pi$ is quasi-\'etale and $K_{X'}\sim0$. 

It remains to show that $X'$ has terminal singularities. Let $E'$ be a geometric valuation over $X'$ which is not a divisor on $X'$. Then by \cite[Proposition 2.14(i)]{DL15} there exist a geometric valuation $E$ over $X$ which is not a divisor on $X$ and a positive integer $1\leq r\leq m$ such that
$$a(E',X',0)+1\geq r\big(a(E,X,0)+1\big).$$
Since $a(E,X,0)>0$, we have $r\big(a(E,X,0)+1\big)>1$, and hence $a(E',X',0)>0$, as desired.
\end{proof}

The following theorem shows that, modulo the Minimal Model Program, the Semiampleness Conjectures reduces to the case where $X$ has terminal singularities and $\Delta=0$.

\begin{thm}\label{thm:reductiontoSemiamplenessCY}
Assume the termination of flips in dimensions at most $n$ and the Abundance Conjecture in dimensions at most $n$. Then the Semiampleness Conjecture on Calabi-Yau varieties in dimension $n$ implies the Semiampleness Conjecture in dimension $n$.
\end{thm}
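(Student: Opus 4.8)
The plan is to reduce the general Semiampleness Conjecture, stated for a klt pair $(X,\Delta)$ with $K_X+\Delta\equiv0$ and a nef Cartier divisor $L$, down to the much more restrictive case of a terminal variety with trivial canonical class. I would proceed in three stages. First, pass from $(X,\Delta)$ to a \emph{minimal model}: since $K_X+\Delta\equiv0$ is in particular pseudoeffective and, assuming the Abundance Conjecture and termination of flips in dimension $n$, we may run a $(K_X+\Delta)$-MMP that is automatically $L$-trivial (by Lemma \ref{lem:numtrivial}, after scaling $L$ so that $K_X+\Delta+mL$-negativity forces $L$-triviality, exactly as in Proposition \ref{pro:contocon}) and terminates. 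Because $K_X+\Delta\equiv0$, every step of this MMP is $(K_X+\Delta)$-trivial, so the process terminates trivially with the identity — more honestly, the point is that $K_X+\Delta$ is already nef, hence by the Abundance Conjecture $K_X+\Delta\sim_\Q0$. Then $L$ descends appropriately and we may use Lemma \ref{lem:descenteff1} to transfer num-semiampleness back; so we reduce to the case $\Delta=0$ and $K_X\sim_\Q0$ on a klt variety.

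Second, remove the remaining discrepancy: replace $X$ by a \emph{terminalisation}. Take a $\Q$-factorial terminal model $g\colon X^t\to X$ with $K_{X^t}=g^*K_X\equiv0$ (this exists by \cite{BCHM}); it is crepant, so $X^t$ is terminal with $K_{X^t}\sim_\Q0$ (using $K_X\sim_\Q0$), and $g^*L$ is nef Cartier. If we can find a semiample $\Q$-divisor $L'$ on $X^t$ with $g^*L\equiv L'$, then by Lemma \ref{lem:descenteff1} (both $X$ and $X^t$ have rational singularities) the pushforward $g_*L'$ is semiample with $L\equiv g_*L'$. So we are reduced to $X$ terminal with $K_X\sim_\Q0$.

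Third, pass from $K_X\sim_\Q0$ to $K_X\sim0$ using the quasi-\'etale cover supplied by Lemma \ref{lem:quasietale}: let $\pi\colon X'\to X$ be the $m$-fold cyclic cover, so $X'$ is terminal with $K_{X'}\sim0$, and $\pi$ is finite with $\pi^*L$ nef Cartier. Applying the Semiampleness Conjecture on Calabi-Yau varieties to $(X',\pi^*L)$ produces a semiample $\Q$-divisor $L''$ with $\pi^*L\equiv L''$. Now invoke Lemma \ref{lem:descenteff2}: since $X$ (being terminal) has rational singularities and $\pi$ is finite surjective, $\frac1{\deg\pi}\pi_*L''$ is a semiample $\Q$-divisor numerically equivalent to $L$. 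Chaining the three descents gives a semiample $\Q$-divisor on the original $X$ numerically equivalent to $L$, which is exactly the Semiampleness Conjecture in dimension $n$.

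The step I expect to be the main obstacle is the first one — making precise that running the MMP genuinely lands us in the situation $\Delta=0$. Strictly, the $(K_X+\Delta)$-MMP with $K_X+\Delta\equiv0$ does nothing, so the reduction to $\Delta=0$ must instead come from somewhere else: one should view a klt pair $(X,\Delta)$ with $K_X+\Delta\equiv 0$ and pass to a \emph{crepant} birational model that is a terminal variety only after first absorbing $\Delta$, which is not automatic since $\Delta$ need not be exceptional over any good model. The correct route is: a klt Calabi–Yau pair $(X,\Delta)$ always has a crepant birational contraction to a $\Q$-factorial klt Calabi–Yau pair, and then one must argue that some such model has $\Delta=0$; this is where the Abundance Conjecture (forcing $K_X+\Delta\sim_\Q0$, hence $\Delta\sim_\Q -K_X$ with both effective and anti-effective up to $\Q$-linear equivalence, so $\Delta\sim_\Q0$ and thus $\Delta=0$ after a suitable birational modification, or directly by the structure of such pairs) genuinely enters. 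Getting this bookkeeping exactly right — so that nef-ness and Cartier-ness of $L$ are preserved and Lemmas \ref{lem:descenteff1} and \ref{lem:descenteff2} apply at each stage — is the delicate part; the remaining two descents are formal applications of the lemmas already assembled in Section 2.
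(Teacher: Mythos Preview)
Your instinct that Step 1 is the obstacle is correct, and the gap is genuine: there is no way to ``absorb $\Delta$'' by the kind of crepant or MMP manoeuvre you describe. The assertion that $\Delta\sim_\Q -K_X$ forces $\Delta\sim_\Q0$ is circular (it presumes $K_X\sim_\Q0$, which is what you want), and an effective $\Q$-divisor $\sim_\Q0$ need not be zero anyway. The paper's device is different and is the key idea you are missing: perturb the boundary to $\Delta':=(1+\varepsilon)\Delta$ for small $\varepsilon>0$, so that $(X,\Delta')$ is still klt and $K_X+\Delta'\equiv\varepsilon\Delta$. Now the $(K_X+\Delta')$-MMP is \emph{nontrivial}. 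If $\nu(X,\Delta)>0$ one invokes Theorem \ref{main_theorem1}(i) for the pair $(X,\Delta')$ to get num-effectivity of $\varepsilon\Delta+L\equiv K_X+\Delta'+L$, and concludes via a short lemma (the paper's Step 1): whenever $\mu\Delta+L$ is num-effective for some $\mu>0$, absorbing a small multiple of an effective representative into the boundary and applying Abundance shows $L$ is num-semiample. If $\nu(X,\Delta)=0$, run the $(K_X+\Delta')$-MMP with scaling of $L$; on the minimal model $K_{X_{\min}}+\Delta'_{\min}\equiv\varepsilon\varphi_*\Delta$ is nef of numerical dimension $0$, hence $\varphi_*\Delta=0$ and $K_{X_{\min}}\sim_\Q0$. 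Only now is the boundary genuinely killed.

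There is also a smaller gap in your Step 2: the terminalisation of a klt variety $X$ with $K_X\sim_\Q0$ is \emph{not} crepant in general (the extracted divisors have discrepancy in $(-1,0]$, producing a new nonzero boundary), so you would be back where you started. The paper inserts an intermediate step: by \cite[Proposition 2.18]{GGK17} there is a quasi-\'etale cover of $X_{\min}$ with \emph{canonical} singularities and trivial canonical class; terminalisation of a canonical variety is crepant, and then Lemma \ref{lem:quasietale} finishes as you say. Finally, note that the MMP with scaling of $L$ does not keep $L$ nef along the way, but it does keep $s(K_X+\Delta')+L$ nef for $s\geq\lambda$; since $K_{X_{\min}}\sim_\Q0$ in the end this suffices, and the num-effectivity of $\lambda_0\varepsilon\Delta+L$ on $X$ is recovered via Lemmas \ref{lem:descenteff1} and \ref{lem:descenteff2} together with the $(s(K_X+\Delta')+L)$-negativity of the MMP for $s>\lambda$.
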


\begin{proof}
\emph{Step 1.}
Let $(X,\Delta)$ be a projective klt pair such that $K_X+\Delta\equiv 0$ and let $L$ be a nef Cartier divisor on $X$. Assume first that there exists a positive rational number $\mu$ such that $\mu\Delta+L$ is num-effective. We claim that then $L$ is num-semiample. Indeed, let $G$ be an effective $\Q$-divisor such that
\begin{equation}\label{eq:121}
\mu\Delta+L\equiv G. 
\end{equation} 
Pick a rational number $0<\delta\ll1$ such that the pair $\big(X,(1-\delta)\Delta+\frac{\delta}{\mu}G\big)$ is klt. Then by \eqref{eq:121} we have
\begin{align*}
0&\equiv K_X+\Delta=K_X+(1-\delta)\Delta+\delta\Delta\\
&\textstyle =K_X+(1-\delta)\Delta+\frac{\delta}{\mu}G-\frac{\delta}{\mu}L,
\end{align*}
so that
$$\textstyle \frac{\delta}{\mu}L\equiv K_X+(1-\delta)\Delta+\frac{\delta}{\mu}G.$$
Then $K_X+(1-\delta)\Delta+\frac{\delta}{\mu}G$ is semiample by the Abundance Conjecture, and the claim follows.

\medskip

\emph{Step 2.}
Now, pick a rational number $0<\varepsilon\ll1$ such that the pair $\big(X,(1+\varepsilon)\Delta\big)$ is klt, and denote $\Delta':=(1+\varepsilon)\Delta$. Observe that
\begin{equation}\label{eq:deltadeltaprime}
K_X+\Delta'\equiv\varepsilon\Delta.
\end{equation}
Assume first that $\nu(X,\Delta)>0$. Then $\nu(X,K_X+\Delta')>0$ by \eqref{eq:deltadeltaprime}, hence the divisor $K_X+\Delta'+L$ is num-effective by Theorem \ref{main_theorem1}(i). As
$$\varepsilon\Delta+L\equiv K_X+\Delta'+L$$
by \eqref{eq:deltadeltaprime}, the divisor $L$ is num-semiample by Step 1.

\medskip

\emph{Step 3.}
From now on we assume, by \eqref{eq:deltadeltaprime}, that $\nu(X,K_X+\Delta')=\nu(X,\Delta)=0$. As explained in \S\ref{subsec:scaling}, since we are assuming the termination of flips in dimension $n$, there is a $(K_X+\Delta')$-MMP
$$\varphi\colon (X,\Delta')\dashrightarrow (X_{\min},\Delta'_{\min})$$ 
and there exist a positive rational number $\lambda$ and a divisor $L_{\min}:=\varphi_*L$ on $X_{\min}$ such that:
\begin{enumerate}
\item[(a)] $K_{X_{\min}}+\Delta'_{\min}$ is nef and $s(K_{X_{\min}}+\Delta'_{\min})+L_{\min}$ is nef for all $s\geq\lambda$,
\item[(b)] the map $\varphi$ is $\big(s(K_X+\Delta')+L\big)$-negative for $s>\lambda$.
\end{enumerate}
Since $\nu(X_{\min},K_{X_{\min}}+\Delta'_{\min})=\nu(X,K_X+\Delta')=0$, by \eqref{eq:deltadeltaprime} we must have
$$\varepsilon\varphi_*\Delta\equiv K_{X_{\min}}+\Delta'_{\min}\sim_\Q 0,$$
hence $\Delta'_{\min}=(1+\varepsilon)\varphi_*\Delta=0$ and therefore $K_{X_{\min}}\sim_\Q 0$. 

\medskip

\emph{Step 4.}
By \cite[Proposition 2.18]{GGK17} there exists a quasi-\'etale cover $\pi_1\colon X_1\to X_{\min}$ such that $X_1$ has canonical singularities and $K_{X_1}\sim 0$. If $\pi_2\colon X_2\to X_1$ is a terminalisation of $X_1$ (see \cite[Corollary 1.4.3]{BCHM} and the paragraph after that result), then $X_2$ has terminal singularities and $K_{X_2}\sim_\Q0$. Then by Lemma \ref{lem:quasietale} there exists a quasi-\'etale cover $\pi_3\colon X'\to X_2$ such that $X'$ has terminal singularities and $K_{X'}\sim0$. Denote $\pi:=\pi_1\circ\pi_2\circ\pi_3\colon X'\to X$.

Fix $\lambda_0>\lambda$. Then the divisor 
$$\lambda_0 K_{X'}+\pi^*L_{\min}\sim_\Q\pi^*\big(\lambda_0(K_{X_{\min}}+\Delta_{\min})+L_{\min}\big)$$
is nef by (a). By the Semiampleness Conjecture on Calabi-Yau varieties the divisor $\lambda_0 K_{X'}+\pi^*L_{\min}$ is num-effective, hence so is $\lambda_0(K_{X_{\min}}+\Delta_{\min})+L_{\min}$ by Lemmas \ref{lem:descenteff1} and \ref{lem:descenteff2}. By \eqref{eq:deltadeltaprime}, by (b) and by Lemma \ref{lem:descenteff1} again, this then implies that the divisor 
$$\lambda_0\varepsilon\Delta+L\equiv\lambda_0(K_X+\Delta')+L$$ 
is num-effective. But then $L$ is num-semiample by Step 1.
\end{proof} 

\section{Surfaces and threefolds}\label{sec:lowdimensions}

In this section we prove Corollary \ref{cor:dim2} and discuss the state of the Semiampleness Conjecture on threefolds.

\subsection{Surfaces}

We need the following lemma.

\begin{lem}\label{lem:surfacestoabelian}
Let $(X,\Delta)$ be a projective klt surface pair such that $K_X+\Delta$ is pseudoeffective, and let $L$ be a nef $\Q$-divisor on $X$ such that $K_X+\Delta+L$ is nef. Let $\alpha\colon X\to A$ be a nontrivial morphism to an abelian variety $A$. Then $K_X+\Delta+tL$ is num-effective for all $t\geq0$.
\end{lem}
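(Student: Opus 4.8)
The plan is to use the morphism $\alpha$ to reduce the dimension and then invoke the low-dimensional results already available. Since $A$ is an abelian variety and $\alpha$ is nontrivial, the Stein factorisation of $\alpha$ gives a fibration $g\colon X\to B$ onto a normal projective variety $B$ with $0<\dim B\le 2$, where $B$ admits a finite morphism to (a subvariety of) $A$; in particular $B$ has nonnegative Kodaira dimension and, more importantly, $K_B$ is pseudoeffective (indeed $B$ is of general type or has a further fibration to a smaller abelian variety). I would distinguish two cases according to $\dim B$.

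First, if $\dim B=2$, then $g$ is birational, $K_B$ is pseudoeffective, and $X$ carries no further structure beyond a birational modification of $B$; here the surface is covered directly by Corollary \ref{cor:dim2} (equivalently Theorem \ref{main_theorem1}, whose hypotheses — termination of flips and the Abundance Conjecture — hold in dimension $2$), which gives num-effectivity of $K_X+\Delta+tL$ for all $t\ge 0$ without using the Semiampleness Conjecture. So the only genuinely new content is the case $\dim B=1$, i.e.\ $g\colon X\to B$ is a fibration onto a smooth curve $B$ of genus $\ge 1$.

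In the fibred case, the strategy is to run an $L$-trivial $(K_X+\Delta)$-MMP, or rather to work on the general fibre $F$ of $g$, which is a smooth curve. On $F$ the divisor $(K_X+\Delta+L)|_F$ has degree $\ge 0$ (it is nef), and I would analyse $\nu(X,K_X+\Delta)$: if it is positive, then Theorem \ref{main_theorem1}(i) already applies and we are done unconditionally; so the crux is $\nu(X,K_X+\Delta)=0$. In that case $K_X+\Delta$ is num-trivial after running the MMP (by Nakayama's result on $\nu=0$, or directly since on a surface $\nu=0$ forces $K_X+\Delta$ to be a finite sum of curves contracted by a birational map to a minimal model), so on the minimal model $K_{X'}+\Delta'\equiv 0$, and we are in the Calabi–Yau situation. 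Now the key point is that the nef Cartier divisor $L'$ (strict transform of $L$) on the Calabi–Yau surface pair is governed by the \emph{known} Semiampleness Conjecture for surfaces: semiampleness of nef line bundles on $K$-trivial klt surfaces is a classical fact (it follows from Riemann–Roch, the classification of minimal surfaces with $\kappa=0$, and case analysis — abelian, K3, Enriques, bielliptic — cf.\ the discussion in \cite{LOP16}). Thus $L'$, and hence $L$, is num-semiample, and therefore $K_X+\Delta+tL$ is num-effective for all $t\ge 0$ by descending along the birational map (using Lemma \ref{lem:descenteff1}) and adding the num-effective $K_X+\Delta$.

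The main obstacle I anticipate is organising the $\nu=0$ reduction cleanly so that it does not secretly require the full Semiampleness Conjecture in dimension $2$ (which would make the lemma vacuous as a stepping stone): one must observe that Semiampleness \emph{is} a theorem for surfaces, so invoking it is legitimate, and then the lemma becomes an unconditional statement. A secondary technical point is that $\alpha$ is only assumed nontrivial, not that $A=\Alb(X)$, so one must pass through the Stein factorisation and check that the base of the resulting fibration really has pseudoeffective canonical class — this is where the abelian target is used, via the fact that subvarieties of abelian varieties, and curves mapping finitely to them, have nonnegative Kodaira dimension.
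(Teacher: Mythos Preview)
Your proposal is circular within the paper's logical structure. You invoke Corollary~\ref{cor:dim2} in the $\dim B=2$ case and the Semiampleness Conjecture for surfaces in the $\nu=0$ subcase, but in this paper Corollary~\ref{cor:dim2} is \emph{deduced from} Theorem~\ref{thm:semiamplenessdim2}, and Theorem~\ref{thm:semiamplenessdim2} is proved \emph{using} the present lemma (it is invoked precisely to dispose of the case $h^1(X,\OO_X)\neq 0$). Your claim that Theorem~\ref{main_theorem1} applies ``without using the Semiampleness Conjecture'' is also incorrect: part~(ii), the $\nu=0$ case, explicitly assumes it. Your final paragraph anticipates the circularity, but your proposed fix --- Semiampleness for surfaces is classically known, so invoking it is legitimate --- renders the lemma redundant: granting Semiampleness from the outset, Corollary~\ref{cor:dim2} holds for every klt surface pair, and there is nothing left for this lemma to do. A symptom of this is that your argument never actually uses the morphism $\alpha$ to the abelian variety; in the $\dim B=2$ case you simply cite a general surface result, and in the $\dim B=1$ case the abelian target plays no role beyond forcing $g(B)\ge 1$, which you then do not exploit.

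The paper's proof is organised differently and is self-contained. After reducing to $t=1$ via $\kappa(X,K_X+\Delta)\ge 0$, it splits according to the \emph{nef dimension} $n(X,K_X+\Delta+L)$ rather than $\dim B$ or $\nu(X,K_X+\Delta)$. When $n\le 1$ the nef reduction is a morphism to a point or a curve, and $K_X+\Delta+L$ is numerically a pullback from the base. The substantive case is $n=2$: here one takes the Stein factorisation $\alpha'\colon X\to A'$ of $\alpha$ and checks that $K_X+\Delta+L$ is $\alpha$-big (either $\alpha'$ is birational, or $A'$ is a curve and the restriction to a general fibre is ample by the definition of nef dimension). One then concludes by \cite[Theorem~4.1]{BirChen15}, an effectivity result for $f$-big nef divisors over abelian varieties. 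This is where the hypothesis on $A$ is genuinely used, and it is the key input your argument is missing.
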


\begin{proof}
Since $\kappa(X,K_X+\Delta)\geq0$ by the Abundance Theorem on surfaces, it is enough to prove the lemma for any $t\geq1$. Fix any such $t$. By replacing $L$ by $tL$, we may assume that $t=1$.

If $n(X,K_X+\Delta+L)=0$, then $K_X+\Delta+L\equiv0$. If $n(X,K_X+\Delta+L)=1$, then the nef reduction $\varphi\colon X\dashrightarrow C$ of $K_X+\Delta+L$ to a curve $C$ is a morphism by \cite[\S2.4.4]{BCE+}. Hence by \cite[Proposition 2.11]{BCE+} there exists a nef $\Q$-divisor $P$ on $C$ such that $K_X+\Delta+L\equiv \varphi^*P$, and the lemma follows in this case.

Finally, assume that $n(X,K_X+\Delta+L)=2$. Consider the Stein factorisation $\alpha'\colon X\to A'$ of $\alpha$. If $\alpha'$ is birational, then $K_X+\Delta+L$ is $\alpha$-big. If $A'$ is a curve, then $(K_X+\Delta+L)|_F$ is ample for a general fibre $F$ of $\alpha'$ by the definition of the nef reduction, hence also in this case $K_X+\Delta+L$ is $\alpha$-big. The result then follows from \cite[Theorem 4.1]{BirChen15}.
\end{proof}

Now we can prove the Semiampleness Conjecture on surfaces.

\begin{thm}\label{thm:semiamplenessdim2}
Let $(X,\Delta)$ be a projective klt surface pair such that $K_X+\Delta\equiv0$ and let $L$ be a nef Cartier divisor on $X$. Then $L$ is num-semiample.
\end{thm}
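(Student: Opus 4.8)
The statement is the Semiampleness Conjecture on klt surface pairs with $K_X+\Delta\equiv 0$, which by the classification of surfaces reduces to understanding the possible structure of $X$ after passing to a suitable cover. First I would reduce to the case where $\Delta=0$ and $X$ has canonical (even terminal, hence smooth, since we are on a surface) singularities: this is exactly what Theorem \ref{thm:reductiontoSemiamplenessCY} together with Lemma \ref{lem:quasietale} gives, provided one knows the termination of flips and the Abundance Conjecture in dimension $2$, both of which are classical theorems for surfaces. After this reduction $X$ is a smooth projective surface with $K_X\sim 0$, i.e.\ $X$ is either an abelian surface or a $K3$ surface (the Enriques and bielliptic cases being excluded once we have arranged $K_X\sim 0$ rather than merely $K_X\equiv 0$, or they reappear but are handled by passing to their étale covers of abelian/$K3$ type).

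**The two cases.** If $X$ is an abelian surface, then the identity map $\alpha=\mathrm{id}\colon X\to X=A$ is a nontrivial morphism to an abelian variety, and every nef divisor on an abelian variety is semiample (indeed the linear system of a high multiple of a nef line bundle on an abelian variety is basepoint free, e.g.\ because nef $=$ limit of ample and ample line bundles on abelian varieties are very ample after a bounded twist; more directly one invokes Lemma \ref{lem:surfacestoabelian} with $\Delta$ replaced by $0$ and $L$ the given divisor). So in the abelian case $L$ itself is num-semiample. If instead $X$ is a $K3$ surface, then $L$ being nef on a $K3$ surface is already known to be semiample: this is the classical result (Saint-Donat, Mayer, Morrison--Nikulin; see the discussion in \cite{LOP16a}) that on a $K3$ surface every nef line bundle is semiample. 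Hence the conclusion holds in this case too, and in fact one gets $L$ itself semiample rather than merely numerically so.

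**Assembling the argument.** Concretely, the steps are: (1) invoke Theorem \ref{thm:reductiontoSemiamplenessCY} in dimension $2$ — its hypotheses (termination of flips and Abundance in dimension $2$) are theorems — to reduce to proving the Semiampleness Conjecture on Calabi-Yau varieties in dimension $2$, i.e.\ for $X$ with terminal singularities and $K_X\sim 0$; (2) note a $2$-dimensional terminal variety is smooth, so $X$ is a smooth minimal surface with $K_X\sim 0$, hence abelian or $K3$; (3) in the abelian case apply Lemma \ref{lem:surfacestoabelian} (with $\Delta = 0$, $\alpha$ the identity) to conclude $K_X + L \equiv M$ with $M$ semiample, and since $K_X\sim 0$ this gives $L$ num-semiample; (4) in the $K3$ case cite the classical semiampleness of nef line bundles on $K3$ surfaces.

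**Main obstacle.** The genuine content is entirely classical and external to this paper: the substantive input is the semiampleness of nef divisors on $K3$ surfaces (and on abelian surfaces), which is not proved here. The only thing one must be careful about internally is that Theorem \ref{thm:reductiontoSemiamplenessCY} requires passing through possibly several quasi-étale covers and a terminalisation, and one must check that descent of num-semiampleness along finite and birational morphisms — which is precisely what Lemmas \ref{lem:descenteff1} and \ref{lem:descenteff2} provide — brings the conclusion back down to the original $(X,\Delta)$. There is no real difficulty here; the proof is a short bookkeeping exercise once the classification and the classical $K3$/abelian facts are in hand.
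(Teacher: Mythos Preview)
Your proposal is correct, and in fact the paper explicitly mentions your route as one valid option: ``First, by Theorem \ref{thm:reductiontoSemiamplenessCY} one can assume that $X$ is smooth and $\Delta=0$, in which case the result is known classically.'' So there is no gap.

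However, the paper's written-out proof takes a different and more self-contained path. After first reducing the problem to showing that $L$ is num-\emph{effective} (upgrading to num-semiample via Abundance on surfaces, exactly as you would need to do after invoking Lemma \ref{lem:surfacestoabelian}), the paper avoids both Theorem \ref{thm:reductiontoSemiamplenessCY} and the classical K3 semiampleness theorem entirely. Instead it passes only to a terminalisation of $(X,\Delta)$, so that $X$ is smooth but $\Delta$ may still be nonzero; then Lemma \ref{lem:surfacestoabelian} disposes of the case $h^1(X,\OO_X)>0$, and in the remaining case $h^1(X,\OO_X)=0$ a direct Riemann--Roch computation gives
\[
\chi(X,L)=\tfrac12 L\cdot(L-K_X)+\chi(X,\OO_X)=\tfrac12 L\cdot(L+\Delta)+\chi(X,\OO_X)\geq 1,
\]
while $h^2(X,L)=h^0(X,K_X-L)=0$ by Serre duality, whence $h^0(X,L)\geq1$. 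The trade-off: your argument is shorter if one is willing to import the K3 result and the heavier machinery of Theorem \ref{thm:reductiontoSemiamplenessCY} (which in turn uses Theorem \ref{main_theorem1}(i)); the paper's argument is lighter on external citations and on the paper's own main results, relying only on terminalisation, Lemma \ref{lem:surfacestoabelian}, and Riemann--Roch. One small correction: Lemma \ref{lem:surfacestoabelian} only yields num-effectivity, not num-semiampleness, so in your step (3) you still need either the upgrade via Abundance or the direct fact that nef divisors on abelian surfaces are semiample.
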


\begin{proof}
It suffices to show that $L$ is num-effective: indeed, assume that there exists an effective $\Q$-divisor $G$ such that $L\equiv G$, and pick a rational number $0<\delta\ll1$ such that $(X,\Delta+\delta G)$ is klt. Then by the Abundance Theorem on surfaces, the divisor $K_X+\Delta+\delta G$ is semiample, hence $L$ is num-semiample since $\delta L\equiv K_X+\Delta+\delta G$.

There are several ways to show that $L$ is num-effective. First, by Theorem \ref{thm:reductiontoSemiamplenessCY} one can assume that $X$ is smooth and $\Delta=0$, in which case the result is known classically. Alternatively, when $\Delta\neq0$, there is an argument by distinguishing cases of a ruled surface and of a rational surface, see \cite[Lemma 4.2 and Example on p.\ 251]{Tot10}.

Here we give a different argument. By passing to a terminalisation of $(X,\Delta)$ (see \cite[Corollary 1.4.3]{BCHM} and the paragraph after that result), we may assume that the pair $(X,\Delta)$ is a terminal pair, and in particular, $X$ is smooth. By Lemma \ref{lem:surfacestoabelian} we may assume that $h^1(X,\OO_X)=0$, and hence $\chi(X,\OO_X)\geq1$. Then
$$\chi(X,L)=\frac12L\cdot(L-K_X)+\chi(X,\OO_X)=\frac12L\cdot(L+\Delta)+\chi(X,\OO_X)\geq1$$
since $L$ is nef and $\Delta$ is effective. We may assume that $L\not\equiv0$, hence $h^2(X,L)=h^0(X,K_X-L) = 0$ 
by Serre duality and this implies $h^0(X,L)\geq1$ as desired.
\end{proof}

Corollary \ref{cor:dim2} is an immediate consequence of Theorems \ref{main_theorem1}, \ref{main_theorem2} and \ref{thm:semiamplenessdim2}.

\subsection{Threefolds}

As mentioned in the introduction, the Semiampleness Conjecture is not known on threefolds. Here we reduce it further to the following conjecture.

\begin{con} \label{con:threefold} 
Let $X$ be a normal projective threefold with terminal singularities such that $K_X \sim 0$. Let $L$ be a nef Cartier divisor on $X$ such that either $\nu(X,L) = 1$, or $\nu(X,L) = 2$ and $L \cdot c_2(X) = 0$. Then $L$ is num-effective. 
\end{con}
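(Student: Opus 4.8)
The plan is to study $L$ through its nef reduction $f\colon X\dashrightarrow Y$ from Theorem \ref{thm:nefreduction}, pushing the problem down to lower-dimensional geometry whenever $n(X,L)<3$, and to concentrate the genuinely new difficulty in the case $n(X,L)=3$.

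\emph{Reduction when $n(X,L)\le 2$.} If $n(X,L)=1$, then $f$ is a morphism onto a curve by \cite[\S2.4.4]{BCE+}, $L$ is numerically trivial on its fibres, so $L\equiv f^{*}P$ for a nef $\Q$-divisor $P$ on the curve $Y$ by \cite[Proposition 2.11]{BCE+}; since $P$ is automatically semiample, $L$ is num-semiample, and in particular num-effective. If $n(X,L)=2$, then Lemma \ref{lem:lehmann} yields a resolution $f'\colon X'\to Z'$ of $f$ over a smooth projective surface $Z'$ and a nef $\Q$-divisor $D$ on $Z'$ with $\pi'^{*}L\equiv f'^{*}D$; since the numerical dimension is a numerical invariant preserved by pullbacks along surjective morphisms, $\nu(Z',D)=\nu(X,L)\in\{1,2\}$. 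For $\nu(Z',D)=2$ the divisor $D$ is big and nef, hence effective. For $\nu(Z',D)=1$ one needs num-effectivity of a nef $\Q$-divisor of numerical dimension one on $Z'$; this is false on arbitrary surfaces (Mumford's strictly nef line bundle with no sections on a ruled surface over a curve of genus $\ge2$), but a relative semipositivity argument applied to $f_{*}\omega_{X/C}$, via a resolution of $X$, shows that $X$, and hence $Z'$, admits no dominant map to a curve of genus $\ge2$, and for such surfaces the required num-effectivity follows from the Enriques--Kodaira classification. In either case, num-effectivity descends from $Z'$ to $X'$ and then to $X$ by Lemmas \ref{lem:descenteff1} and \ref{lem:descenteff2}.

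\emph{The case $n(X,L)=3$.} Here $X$ is terminal and Gorenstein with $K_X\sim0$, so Riemann--Roch (the singularity corrections vanishing as $X$ is Gorenstein, and $\chi(X,\OO_X)=0$ by Serre duality) gives
$$\chi\big(X,\OO_X(mL)\big)=\frac{m^{3}}{6}\,L^{3}+\frac{m}{12}\,L\cdot c_2(X)\qquad\text{for all }m\in\Z,$$
Miyaoka's theorem \cite{Miy87} gives $L\cdot c_2(X)\ge0$, and $h^{3}\big(X,\OO_X(mL)\big)=h^{0}\big(X,\OO_X(-mL)\big)=0$ for $m>0$ since $L$ is nef and not numerically trivial. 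When $\nu(X,L)=1$ and $L\cdot c_2(X)>0$ one has $\chi\big(X,\OO_X(mL)\big)=\tfrac{m}{12}L\cdot c_2(X)\to\infty$, so it would suffice to bound $h^{2}\big(X,\OO_X(mL)\big)=h^{1}\big(X,\OO_X(-mL)\big)$ from above and conclude $h^{0}>0$; the obstruction is that no Kawamata--Viehweg-type vanishing is available because $L$ is not big. In the two cases actually left open --- namely $\nu(X,L)=1$ with $L\cdot c_2(X)=0$, and $\nu(X,L)=2$ with $L\cdot c_2(X)=0$ --- one has $L^{3}=0$, hence $\chi\big(X,\OO_X(mL)\big)\equiv0$, so Riemann--Roch gives no information whatsoever and cohomology must be produced by other means, for instance by proving $h^{2}\big(X,\OO_X(mL)\big)=0$ and $h^{1}\big(X,\OO_X(mL)\big)>0$ for a suitable $m$.

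\emph{The main obstacle.} The hard point is exactly this last situation. When $n(X,L)=3$ the divisor $L$ is essentially strictly nef, so Conjecture \ref{con:threefold} in this sub-case is intertwined with Serrano's Conjecture \ref{con:serrano} and with the question of whether strictly nef divisors on Calabi-Yau threefolds are ample. A natural attack is to combine the Beauville--Bogomolov-type decomposition for $K$-trivial varieties --- which reduces, after a quasi-\'etale cover and using Lemmas \ref{lem:descenteff1} and \ref{lem:descenteff2} to descend back, to an abelian threefold (where every nef divisor is semiample), a product of a $K$-trivial surface with an elliptic curve (likewise), or a \emph{strict} Calabi-Yau threefold with $q=0$ --- with Wilson's analysis of nef divisors $D$ on Calabi-Yau threefolds satisfying $D\cdot c_2(X)=0$, which forces strong geometric restrictions on $X$ (cf.\ \cite{LOP16a}). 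Ruling out a ``Mumford-type'' nef, non-num-effective divisor on a strict Calabi-Yau threefold --- which is precisely the negation of the conjecture --- appears to require either new vanishing statements for non-big nef line bundles, or a deformation/monodromy argument controlling $h^{0}\big(X,\OO_X(mL)\big)$ across a family, in the spirit of the known cases of the Semiampleness Conjecture.
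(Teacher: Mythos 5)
You were asked to prove a statement labelled \texttt{con:threefold}, and the key observation is that the paper itself does not prove it: it is formulated as a \emph{conjecture} and deliberately left open. Theorem \ref{thm:reductiontoSemiamplenessCY} reduces the Semiampleness Conjecture to Calabi--Yau varieties with terminal singularities, and Theorem \ref{thm:threefold} then handles the remaining cases $\nu(X,L)\in\{0,3\}$ and $\nu(X,L)=2,\ L\cdot c_2(X)>0$ directly via Riemann--Roch and Kawamata--Viehweg vanishing, precisely so as to isolate the two numerically degenerate cases in Conjecture \ref{con:threefold} as the unresolved residue. So the expected outcome was never a complete proof, and you correctly end by identifying the remaining obstruction rather than claiming a result.

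That said, a few specific points in your partial reductions are not watertight and should be flagged. First, Lemma \ref{lem:lehmann} takes a \emph{morphism} $f\colon X\to Z$ as input, whereas the nef reduction of Theorem \ref{thm:nefreduction} is only almost holomorphic; you need to pass to a resolution of the nef reduction before invoking the lemma (the paper does exactly this maneuver in Step 3 of the proof of Theorem \ref{thm:semiampleP_sigma}). Second, and more seriously, your claim that on the base surface $Z'$ the num-effectivity of a nef $\Q$-divisor $D$ with $\nu(Z',D)=1$ ``follows from the Enriques--Kodaira classification'' once one rules out dominant maps to curves of genus $\geq 2$ is not substantiated. For a surface $Z'$ of general type with $q(Z')=0$, Riemann--Roch gives $\chi(D)=-\tfrac12 D\cdot K_{Z'}+\chi(\OO_{Z'})$, which can be negative, and $h^2(Z',D)=h^0(Z',K_{Z'}-D)$ need not vanish; ruling out maps to high-genus curves does not by itself force num-effectivity. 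To make this work you would need to pin down what the nef reduction base of a nef divisor on a terminal $K$-trivial threefold can actually be, which is considerably more delicate than the genus-$\geq2$ exclusion.

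Your stratification by nef dimension $n(X,L)$ rather than by numerical dimension $\nu(X,L)$ is a natural refinement, and the identification of $n(X,L)=3$ with essentially strictly nef divisors, hence with Serrano-type and Wilson-type questions, is the right diagnosis of where the genuine difficulty lies. But a proof of Conjecture \ref{con:threefold} is not expected from the paper and is not achieved here.
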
  

Note that the condition $L \cdot c_2(X) = 0$ can be rephrased by saying that $\pi^*L \cdot c_2(\widehat X) = 0$ for a desingularisation $\pi\colon \widehat X \to X$ which is an isomorphism on the smooth locus of $X$. For various results towards Conjecture \ref{con:threefold}, we refer to \cite{LOP16,LOP16a} and to \cite{LP18}. 

\begin{thm} \label{thm:threefold} 
Assume Conjecture \ref{con:threefold}. Let $(X,\Delta)$ be a $3$-dimensional projective klt pair such that $K_X + \Delta \equiv 0$ and let $L$ be a nef Cartier divisor on $X$. Then $L$ is num-semiample. 
\end{thm}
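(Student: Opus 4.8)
The plan is to mimic the structure of the proof of Theorem \ref{thm:reductiontoSemiamplenessCY}, reducing the general klt Calabi-Yau pair $(X,\Delta)$ in dimension $3$ to the case of a threefold with terminal singularities and trivial canonical class, where Conjecture \ref{con:threefold} applies — the only extra work being to handle the hypotheses $\nu(X,L)=1$ or $\nu(X,L)=2$ with $L\cdot c_2=0$, and to dispose of the case $\nu(X,L)=3$ directly. First I would recall from Step 1 of the proof of Theorem \ref{thm:reductiontoSemiamplenessCY} that it suffices to show num-effectivity of $L$, and indeed that it suffices to show that $\mu\Delta+L$ is num-effective for some rational $\mu>0$; and that by running the argument of Steps 2--4 there (which uses the termination of flips and the Abundance Conjecture, both available in dimension $3$) we may assume $X$ has terminal singularities, $\Delta=0$ and $K_X\sim 0$. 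Note that this reduction already invokes Theorem \ref{main_theorem1}(i) when $\nu(X,\Delta)>0$; since all hypotheses of Theorems \ref{main_theorem1} and \ref{main_theorem2} hold in dimension $3$ (by Corollary \ref{cor:dim3} and the known abundance and termination in low dimensions), this is legitimate.

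So we are reduced to: $X$ a projective threefold with terminal singularities, $K_X\sim 0$, $L$ a nef Cartier divisor, and we want $L$ num-effective. I would distinguish cases according to $\nu(X,L)$. If $\nu(X,L)=0$, then $L\equiv 0$ and there is nothing to prove. If $\nu(X,L)=3$, then $L$ is nef and big, so $L$ is num-effective by asymptotic Riemann-Roch (or directly: $h^0(X,mL)$ grows like $m^3$). If $\nu(X,L)=1$, then Conjecture \ref{con:threefold} applies directly. The remaining case is $\nu(X,L)=2$. Here I would argue that if $L\cdot c_2(X)>0$ then in fact $L$ is num-effective by a Riemann-Roch computation on a suitable desingularisation: writing $\pi\colon \widehat X\to X$ for a desingularisation that is an isomorphism on the smooth locus, one has $K_{\widehat X}=\pi^*K_X+E$ with $E\geq 0$ exceptional (by terminality), so $\chi(\widehat X,\pi^*(mL))=\frac{1}{6}(mL)^3\cdot\pi^*... +\frac{1}{12}m\,\pi^*L\cdot c_2(\widehat X)+\chi(\OO_{\widehat X})$; since $(\pi^*L)^3=L^3=0$ because $\nu(X,L)=2$ forces $L^3=0$, the leading term is the linear term $\tfrac{1}{12}m\,\pi^*L\cdot c_2(\widehat X)=\tfrac{1}{12}m\,L\cdot c_2(X)>0$, while the higher cohomology is controlled: $h^2(\widehat X,\pi^*(mL))=h^1(\widehat X,\pi^*(K_X-mL))$ which vanishes for $m\gg 0$ when $L$ is nonzero on a general fiber structure — more carefully, one uses that $\nu=2$ means there is a fibration-like structure and applies Kawamata--Viehweg-type vanishing or a direct argument as in \cite{LP18} — so $h^0(\widehat X,\pi^*(mL))\geq 1$ for $m\gg 0$, hence $L$ is num-effective. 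If instead $L\cdot c_2(X)=0$, then Conjecture \ref{con:threefold} applies directly. In all cases $L$ is num-effective, and therefore num-semiample by the argument of Step 1 of Theorem \ref{thm:reductiontoSemiamplenessCY} (pick $0<\delta\ll1$ with $(X,\Delta+\delta G)$ klt for $L\equiv G\geq 0$, apply abundance to the nef $K_X+(1-\delta)\Delta+\tfrac{\delta}{\mu}G$, which is semiample and numerically proportional to $L$).

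The main obstacle I anticipate is the case $\nu(X,L)=2$ with $L\cdot c_2(X)>0$: one must be careful that the Riemann-Roch estimate genuinely produces sections, controlling $h^2$ (equivalently $h^1$ of the twist by $K_X-mL$). One clean way around this is to observe that the sign of $L\cdot c_2(X)$ is actually not needed as a separate case if one instead invokes the Miyaoka--Yau type inequality $L\cdot c_2(X)\geq 0$ for a nef $L$ on a minimal (terminal, $K$-trivial) threefold — which is a theorem, see the results quoted in \cite{LP18,LOP16} — so that the only genuinely open subcase is $L\cdot c_2(X)=0$, exactly where Conjecture \ref{con:threefold} kicks in, together with the automatically-handled cases $\nu(X,L)\in\{0,1,3\}$. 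Thus the structure is: reduce to terminal $K$-trivial $X$ with $\Delta=0$; split on $\nu(X,L)$; the only case needing Conjecture \ref{con:threefold} is $\nu=1$ (directly) and $\nu=2$ with $L\cdot c_2=0$; the case $\nu=2$ with $L\cdot c_2>0$ follows from Riemann--Roch and Miyaoka's $c_2$-inequality, and $\nu\in\{0,3\}$ are trivial.
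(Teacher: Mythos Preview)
Your outline matches the paper's proof almost exactly: reduce via Theorem~\ref{thm:reductiontoSemiamplenessCY} to a terminal threefold with $K_X\sim 0$, observe that num-effectivity of $L$ suffices (the paper phrases this by citing \cite{Og93,KMM94} together with the Step~1 argument of Theorem~\ref{thm:semiamplenessdim2}), dispose of $\nu(X,L)\in\{0,3\}$, invoke Conjecture~\ref{con:threefold} for $\nu=1$ and for $\nu=2$ with $L\cdot c_2(X)=0$, and use Miyaoka's inequality \cite{Miy87} to ensure the only remaining case is $\nu=2$ with $L\cdot c_2(X)>0$.

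The one place where the paper is sharper is precisely the obstacle you flag. Rather than passing to a resolution $\widehat X$ and doing asymptotics in $m$, the paper stays on $X$: Serre duality with $K_X\sim 0$ gives $\chi(X,\OO_X)=0$, and singular Riemann--Roch then yields $\chi(X,L)$ equal to a positive multiple of $L\cdot c_2(X)>0$. The vanishing $h^q(X,L)=0$ for $q\geq 2$ is obtained in one line by quoting a Kawamata--Viehweg-type result \cite[Lemma~2.1]{LP17}, so $h^0(X,L)>0$ directly. Your detour through $\widehat X$ and the tentative ``fibration-like structure'' argument for $h^2$ is unnecessary and, as you yourself note, not actually completed; the clean fix is the vanishing lemma on $X$.
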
 

\begin{proof} 
By Theorem \ref{thm:reductiontoSemiamplenessCY} we may assume that $X$ has terminal singularities and that $K_X \sim 0$. The result is clear if $\nu(X,L)\in\{0,3\}$, hence we may assume that $\nu(X,L) \in \{1,2\}$. By \cite{Og93,KMM94}, it suffices to show that $L$ is num-effective: this is shown as in the first paragraph of the proof of Theorem \ref{thm:semiamplenessdim2}. Since in any case $L\cdot c_2(X)\geq0$ by \cite{Miy87}, by Conjecture \ref{con:threefold} we may assume that $\nu(X,L) = 2$ and $L \cdot c_2(X) >0$.

By Serre duality we have $\chi(X,\OO_X)=0$, and thus, Hirzebruch-Riemann-Roch gives
$$ \chi(X,L) = \frac {1} {2} L \cdot c_2(X)>0.$$
As $h^q(X,L) = 0$ for $q\geq 2$ by the Kawamata-Viehweg vanishing (see for instance \cite[Lemma 2.1]{LP17}), we infer $h^0(X,L) >0$, which finishes the proof.
\end{proof}

As an immediate consequence, we obtain:

\begin{cor}
Suppose that Conjecture \ref{con:threefold} holds. Then the Generalised Nonvanishing Conjecture and the Generalised Abundance Conjecture hold in dimension three.
\end{cor}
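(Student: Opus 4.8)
The plan is to assemble results that are already in place. First I would note that in dimensions at most $3$ the termination of klt flips and the Abundance Conjecture both hold unconditionally (Miyaoka, Kawamata, Koll\'ar, Keel, Matsuki, M\textsuperscript{c}Kernan, as recalled in the introduction). Thus the only hypothesis of Theorems \ref{main_theorem1} and \ref{main_theorem2} that is not automatically available for $n=3$ is the Semiampleness Conjecture in dimensions at most $3$, and by Remark \ref{rem:im} this hypothesis is needed only in part (ii), i.e.\ when $\nu(X,K_X+\Delta)=0$.

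Next I would verify that Conjecture \ref{con:threefold} yields the Semiampleness Conjecture in every dimension $\le 3$: in dimension $1$ it is trivial, since a nef divisor on a smooth projective curve is either numerically trivial or ample; in dimension $2$ it is Theorem \ref{thm:semiamplenessdim2}, which is unconditional; and in dimension $3$ it is exactly the statement of Theorem \ref{thm:threefold}, whose proof deduces the Semiampleness Conjecture for klt Calabi-Yau pairs from Conjecture \ref{con:threefold} by first reducing, via Theorem \ref{thm:reductiontoSemiamplenessCY}, to the terminal case with $K_X\sim 0$ (the hypotheses of that reduction --- termination and Abundance in dimensions $\le 3$ --- being again satisfied). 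With this in hand, both parts of Theorems \ref{main_theorem1} and \ref{main_theorem2} become unconditional for $n=3$.

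Finally I would conclude. For a $3$-dimensional projective klt pair $(X,\Delta)$ with $K_X+\Delta$ pseudoeffective we have $\nu(X,K_X+\Delta)\ge0$, so part (i) --- which is already Corollary \ref{cor:dim3} --- covers $\nu(X,K_X+\Delta)>0$ and part (ii) covers $\nu(X,K_X+\Delta)=0$; hence Theorem \ref{main_theorem1} gives that $K_X+\Delta+tL$ is num-effective for every nef $\Q$-divisor $L$ and every $t\ge0$, which is the Generalised Nonvanishing Conjecture in dimension $3$, and Theorem \ref{main_theorem2} gives that $K_X+\Delta+L$ is num-semiample whenever it is moreover nef, which is the Generalised Abundance Conjecture in dimension $3$. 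There is no real obstacle here: the corollary only records that Conjecture \ref{con:threefold} is precisely the missing input for the hard range $\nu(X,K_X+\Delta)=0$ on threefolds, the single point deserving a moment's care being that Theorem \ref{thm:threefold} supplies the Semiampleness Conjecture for arbitrary klt Calabi-Yau \emph{pairs}, not merely varieties.
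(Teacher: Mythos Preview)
Your proposal is correct and follows essentially the same approach as the paper: the paper's proof is the one-liner ``This follows from Theorems \ref{main_theorem1}, \ref{main_theorem2} and \ref{thm:threefold},'' and you have simply unpacked the implicit steps, in particular the verification that the Semiampleness Conjecture holds in all dimensions $\leq 3$ (not just $3$) so that the hypotheses of Theorems \ref{main_theorem1}(ii) and \ref{main_theorem2}(ii) are fully met.
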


\begin{proof}
This follows from Theorems \ref{main_theorem1}, \ref{main_theorem2} and \ref{thm:threefold}.
\end{proof}

\bibliographystyle{amsalpha}

\bibliography{biblio}
\end{document}